\let\bb\mathbb       % BlackBoardBold (double letters)
  \def\Ex{{\bb E}}
  \def\BB{{\bb B}}
  \def\RR{{\bb R}}
\def\Ex{\mathbf E}
\def\calC{\mathcal C}
\def\calD{\mathcal D}
\def\calF{\mathcal F}
\def\calG{\mathcal G}
\def\calH{\mathcal H}
\def\calR{\mathcal R}
\def\calX{\mathcal X}
\def\bb{\mathbb}
\def\hat{\widehat}
\def\bfX{\mathbf X}
\def\bfA{\mathbf A}
\def\bSigma{\boldsymbol\Sigma}
\def\bTau{\boldsymbol{\mathcal T}}
\def\bPi{\boldsymbol\Pi}
\def\bX{\boldsymbol X}
\def\bx{\boldsymbol x}
\def\bv{\boldsymbol v}
\def\bbeta{\boldsymbol \beta}
\def\bpi{\boldsymbol \pi}
\def\bomega{\boldsymbol \omega}
\def\bZ{\boldsymbol Z}
\def\11{\mathds 1}
\def\b1{\mathbf 1}
\def\Pb{\mathbf P}
\def\var{{\bf Var}}
\def\KL{{\rm KL}}
\def\bfZ{\mathbf Z}
\def\bfI{\mathbf I}
\DeclareMathOperator*{\argmin}{arg\,min}
\newtheorem{proposition}{Proposition}
\newtheorem{lemma}{Lemma}[section]
\newtheorem{prop}{Proposition}[section]
\newtheorem{theorem}{Theorem}[section]
\title{Optimal Kullback-Leibler Aggregation in Mixture Density Estimation by Maximum Likelihood}
\author{Arnak S. Dalalyan and Mehdi Sebbar}
\begin{document}
\maketitle

\begin{abstract}
We study the maximum likelihood estimator of density of $n$ independent observations,
under the assumption that it is well approximated by a mixture with a large number of
components. The main focus is on statistical properties with respect to the
Kullback-Leibler loss. We establish risk bounds taking the form of sharp oracle inequalities
both in deviation and in expectation. A simple consequence of these bounds is that the
maximum likelihood estimator attains the optimal rate $((\log K)/n)^{\nicefrac12}$, up to a
possible logarithmic correction, in the problem of convex aggregation when the number
$K$ of components is larger than $n^{\nicefrac12}$. More importantly, under the additional
assumption that the Gram matrix of the components satisfies the compatibility condition,
the obtained oracle inequalities yield the optimal rate in the sparsity scenario. That
is, if the weight vector is (nearly) $D$-sparse, we get the rate $(D\log K)/n$. As a
natural complement to our oracle inequalities, we introduce the notion of nearly-$D$-sparse
aggregation and establish matching lower bounds for this type of aggregation.
\end{abstract}

\section{Introduction}
Assume that we observe $n$ independent random vectors $\bX_1,\ldots,\bX_n\in\calX$ drawn from a probability distribution $P^*$
that admits a density function $f^*$ with respect to some reference measure $\nu$.
The goal is to estimate the unknown density by a mixture density. More precisely, we assume that for a given family of mixture
components $f_1,\ldots,f_K$, the unknown density of the observations $f^*$ is well approximated by a convex combination
$f_{\bpi}$ of these components, where
\begin{equation}
f_{\bpi}(\bx)=\sum_{j=1}^K\pi_j f_j(\bx), \quad \bpi \in \BB_+^K=\Big\{\bpi\in [0,1]^K: \sum_{j=1}^K\pi_j=1\Big\}.
\end{equation}
The assumption that the component densities $\calF=\{f_j:j\in[K]\}$ are known essentially means that they are chosen from a
dictionary obtained on the basis of previous experiments or expert knowledge.

We focus on the problem of estimation of the density function $f_{\bpi}$ and the weight vector $\bpi$ from the simplex
$\BB_+^K$ under the sparsity scenario: the ambient dimension $K$ can be large, possibly larger than the sample size
$n$, but most entries of $\bpi$ are either equal to zero or very small.

Our goal is to investigate the statistical properties of the Maximum Likelihood Estimator (MLE), defined by
\begin{equation}\label{MLE}
\hat\bpi \in \argmin_{\bpi\in \Pi}\big\{-\frac{1}{n}\sum_{i=1}^n\log f_{\bpi}(\bX_i)\big\},
\end{equation}
where the minimum is computed over a suitably chosen subset $\Pi$ of $\BB_+^K$. In the present work, we will consider
sets $\Pi=\Pi_n(\mu)$, depending on a parameter $\mu>0$ and the sample $\{\bX_1,\ldots,\bX_n\}$, defined by
\begin{equation}\label{Pi}
\Pi_n(\mu) = \bigg\{\bpi\in \BB_+^K: \min_{i\in [n]}\sum_{j=1}^K \pi_j f_j(\bX_i)\geq \mu\bigg\}.
\end{equation}
Note that the objective function in \eqref{MLE} is convex and the same is true for set \eqref{Pi}. Therefore, the MLE
$\hat\bpi$ can be efficiently computed even for large $K$ by solving a problem of convex programming. To ease notation, very often,
we will omit the dependence of $\Pi_n(\mu)$ on $\mu$ and write $\Pi_n$ instead of $\Pi_n(\mu)$.

The quality of an estimator $\hat\bpi$ can be measured in various ways. For instance, one can consider
the Kullback-Leibler divergence
\begin{equation}
\KL(f^* ||f_{\hat\bpi}) =
    \begin{cases}
    \int_{\calX} f^*(\bx)\,\log \frac{f^*(\bx)}{f_{\hat\bpi}(\bx)}\,\nu(d\bx), &
            \text{ if } P^*\big(f^*(\bX)=0 \text{ and } f_{\hat\bpi}(\bX)>0\big)=0, \\
    +\infty, & \text{otherwise},
    \end{cases}
\end{equation}
which has the advantage of bypassing identifiability
issues. One can also consider the (well-specified) setting where $f^*=f_{\bbeta^*}$ for some $\bbeta^*\in\BB^K_+$
and measure the quality of estimation through a distance between the vectors $\hat\bpi$ and $\bpi^*$ (such as
the $\ell_1$-norm $\|\hat\bpi-\bpi^*\|_1$ or the Euclidean norm $\|\hat\bpi-\bpi^*\|_2$).

The main contributions of the present work are the following:
\begin{enumerate}[(a)]
\item We demonstrate that in the mixture model there is no need to introduce sparsity favoring penalty in order to
get optimal rates of estimation under the Kullback-Leibler loss in the sparsity scenario. In fact, the constraint
that the weight vector belongs to the simplex acts as a sparsity inducing penalty. As a consequence, there is no
need to tune a parameter accounting for the magnitude of the penalty.
\item We show that the maximum likelihood estimator of the mixture density simultaneously attains the optimal rate
of aggregation for the Kullback-Leibler loss for at least three types of aggregation: model-selection, convex and
$D$-sparse aggregation.
\item We introduce a new type of aggregation, termed \textit{nearly $D$-sparse aggregation} that extends and unifies
the notions of convex and $D$-sparse aggregation. We establish strong lower bounds for the nearly $D$-sparse aggregation
and demonstrate that the maximum likelihood estimator attains this lower bound up to logarithmic factors.
\end{enumerate}

\subsection{Related work}\label{ssec:rel_work}

The results developed in the present work aim to gain a better understanding (a) of the statistical properties of the
maximum likelihood estimator over a high-dimensional simplex and (b) of the problem of aggregation of density estimators
under the Kullback-Leibler loss. Various procedures of aggregation\footnote{We refer the interested reader to \citep{TsybICM}
for an up to date introduction into aggregation of statistical procedures.} for density estimation have been studied in the literature
with respect to different loss functions. \citep{Catoni97,Yang2000,JRT} investigated different variants of the progressive
mixture rules, also known as mirror averaging \citep{YNTV,DT12a}, with respect to the Kullback-Leibler loss and established
model selection type oracle inequalities\footnote{This means that they prove that the expected loss of the aggregate is
almost as small as the loss of the best element of the dictionary $\{f_1,\ldots,f_K\}$.} in expectation. Same type of guarantees,
but holding with high probability, were recently obtained in~\citep{Bellec2014,Butucea1} for the procedure termed $Q$-aggregation,
introduced in other contexts by \citep{Dai,Rigollet12}.

Aggregation of estimators of a probability density function under the  $L_2$-loss was considered in \citep{RT7}, where it was shown
that a suitably  chosen unbiased risk estimate minimizer is optimal both for convex and linear aggregation. The goal in the present work
is to go beyond the settings of the aforementioned papers in that we want simultaneously to do as well
as the best element of the dictionary, the best convex combination of the dictionary elements but also
the best sparse convex combination. Note that the latter task was coined $D$-aggregation in \citep{Lounici7}
(see also \citep{bunea2007}). In the present work, we rename it in $D$-sparse aggregation, in order to make
explicit its relation to sparsity.

Key differences between the latter work and ours are that we do not assume the sparsity index to be known and we are analyzing
an aggregation strategy that is computationally tractable even for large $K$. This is also the case of \citep{SPADES,Bertin},
which are perhaps the most relevant references to the present work. These papers deal with the $L_2$-loss and investigate the
lasso and the Dantzig estimators, respectively, suitably adapted to the problem of density estimation. Their methods handle dictionary
elements $\{f_j\}$ which are not necessarily probability density functions, but has the drawback of requiring the choice of a
tuning parameter. This choice is a nontrivial problem in practice. Instead, we show here that the optimal rates of sparse
aggregation with respect to the Kullback-Leibler loss can be attained by procedure which is tuning parameter free.

Risk bounds for the maximum likelihood and other related estimators in the mixture model have a long history \citep{LiB99,Li99,Rakhlin5}.
For the sake of comparison we recall here two elegant results providing non-asymptotic guarantees for the Kullback-Leibler loss.

\begin{theorem}[Theorem 5.1 in \citep{Li99}]\label{th:1}
Let $\calF$ be a finite dictionary of cardinality $K$ of density functions such that $\max_{f\in\calF}\|f^*/f\|_\infty\le V$. Then, the maximum likelihood estimator
over $\calF$, $\hat f^{\rm ML}_{\calF}\in{\rm arg}\max_{f\in\calF} \sum_{i=1}^n \log f(\bX_i)$, satisfies the inequality
\begin{equation}\label{eqthOne}
\Ex_{f^*}\big[\KL\big(f^*||\hat f^{\rm ML}_{\calF}\big)\big] \le \big(2+\log V\big)\bigg(\min_{f\in\calF} \KL(f^*|| f) + \frac{2\log K}{n}\bigg).
\end{equation}
\end{theorem}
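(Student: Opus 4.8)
The plan is to combine a Chernoff-type exponential-moment bound with the defining optimality of the MLE and one deterministic comparison inequality between the Kullback-Leibler and the R\'enyi-$\tfrac12$ (Hellinger) divergences. Throughout, write $\hat f=\hat f^{\rm ML}_{\calF}$, let all expectations be under $\bX_1,\dots,\bX_n$ i.i.d.\ with density $f^*$, and for densities $p,q$ put $\rho(p,q)=\int_{\calX}\sqrt{p\,q}\,d\nu$ and $D_{1/2}(p||q)=-2\log\rho(p,q)\ge 0$. The single analytic input I would use is the inequality
\begin{equation}\label{eq:KLvsHell}
\KL(p||q)\ \le\ \big(2+\log V\big)\,D_{1/2}(p||q)\qquad\text{whenever }\ \|p/q\|_\infty\le V,
\end{equation}
which is exactly the place where the factor $2+\log V$ enters. (In passing, $\|f^*/f\|_\infty\le V$ forces $\log(f^*/f)\le\log V$, hence $\KL(f^*||f)\le\log V<\infty$ for every $f\in\calF$ and for $\hat f$ as well, so all quantities below are well-defined and no integrability issue arises.)

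\emph{Step 1 (exponential moment).} First, for each fixed $f\in\calF$ I would introduce the statistic
\begin{equation*}
M_f=\exp\Big(\tfrac12\sum_{i=1}^n\log\frac{f(\bX_i)}{f^*(\bX_i)}+\tfrac n2\,D_{1/2}(f^*||f)\Big).
\end{equation*}
Since $\Ex_{f^*}\big[(f(\bX)/f^*(\bX))^{1/2}\big]=\int\sqrt{f^*f}\,d\nu=e^{-D_{1/2}(f^*||f)/2}$ and the $\bX_i$ are independent, $\Ex_{f^*}[M_f]=1$, so $\Ex_{f^*}\big[\sum_{f\in\calF}M_f\big]=K$. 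As $\hat f\in\calF$ and each $M_f\ge0$, we have $M_{\hat f}\le\sum_{f\in\calF}M_f$ pointwise, hence $\Ex_{f^*}[M_{\hat f}]\le K$, and Jensen's inequality for $\log$ gives $\Ex_{f^*}[\log M_{\hat f}]\le\log K$, that is,
\begin{equation}\label{eq:after-jensen}
\tfrac12\,\Ex_{f^*}\Big[\sum_{i=1}^n\log\frac{\hat f(\bX_i)}{f^*(\bX_i)}\Big]
 +\tfrac n2\,\Ex_{f^*}\big[D_{1/2}(f^*||\hat f)\big]\ \le\ \log K.
\end{equation}

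\emph{Step 2 (optimality of the MLE, and conclusion).} Next I would pick $f_0\in\argmin_{f\in\calF}\KL(f^*||f)$, the minimum being attained since $\calF$ is finite. By definition of $\hat f$, $\sum_i\log\hat f(\bX_i)\ge\sum_i\log f_0(\bX_i)$, so subtracting $\sum_i\log f^*(\bX_i)$ and taking expectations,
\begin{equation*}
\Ex_{f^*}\Big[\sum_{i=1}^n\log\frac{\hat f(\bX_i)}{f^*(\bX_i)}\Big]\ \ge\
\Ex_{f^*}\Big[\sum_{i=1}^n\log\frac{f_0(\bX_i)}{f^*(\bX_i)}\Big]\ =\ -\,n\,\KL(f^*||f_0).
\end{equation*}
Inserting this into \eqref{eq:after-jensen} and multiplying by $2/n$ yields $\Ex_{f^*}\big[D_{1/2}(f^*||\hat f)\big]\le\KL(f^*||f_0)+\tfrac{2\log K}{n}$. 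Since $\hat f\in\calF$ implies $\|f^*/\hat f\|_\infty\le V$, inequality \eqref{eq:KLvsHell} applies to $\hat f$; taking expectations and chaining the two bounds,
\begin{equation*}
\Ex_{f^*}\big[\KL(f^*||\hat f)\big]\ \le\ (2+\log V)\,\Ex_{f^*}\big[D_{1/2}(f^*||\hat f)\big]\ \le\
(2+\log V)\Big(\KL(f^*||f_0)+\frac{2\log K}{n}\Big),
\end{equation*}
which is \eqref{eqthOne}.

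\emph{Main obstacle and a remark.} Steps 1--2 are essentially bookkeeping; the real content is the comparison inequality \eqref{eq:KLvsHell}. The hard part there is that it is \emph{false at the level of integrands} --- for $q/p$ large one does not have $-\log(p/q)\le(2+\log V)(1-\sqrt{q/p})$ --- so its proof must use the normalization $\int q\,d\nu=1$, e.g.\ by subtracting a suitable multiple of $\int(q-p)\,d\nu=0$ from $\KL(p||q)$ and reducing to a one-variable inequality in $r=\sqrt{q/p}\in[V^{-1/2},\infty)$, which is then settled by a convexity/concavity analysis together with the endpoint constraint $r\ge V^{-1/2}$. Finally, Step 1 can be run with any exponent $\alpha\in(0,1)$ in place of $\tfrac12$, producing $\Ex_{f^*}[D_{1-\alpha}(f^*||\hat f)]\le\KL(f^*||f_0)+\tfrac{\log K}{n\alpha}$ together with an $\alpha$-dependent analogue of \eqref{eq:KLvsHell}; the choice $\alpha=\tfrac12$ is what produces precisely the constants in the statement.
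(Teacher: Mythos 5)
This theorem is quoted in the paper from \citet{Li99} as background; the paper gives no proof of it, so there is nothing internal to compare against. Your argument is correct and is, as far as I can tell, the standard (and essentially the original) proof: the change-of-measure/Chernoff step with the unit-mean statistics $M_f$, the union bound $M_{\hat f}\le\sum_f M_f$, Jensen, and the MLE optimality all check out and yield $\Ex[D_{1/2}(f^*||\hat f)]\le\min_{f\in\calF}\KL(f^*||f)+\tfrac{2\log K}{n}$ exactly as claimed. The only ingredient you state rather than prove is the comparison inequality $\KL(p||q)\le(2+\log\|p/q\|_\infty)\,D_{1/2}(p||q)$; this is a known lemma (it follows from the Yang--Barron bound $\KL(p||q)\le(2+\log\|p/q\|_\infty)\int(\sqrt p-\sqrt q)^2d\nu$ together with $2(1-\rho)\le-2\log\rho$), and your sketch of its proof --- add the null term $\int(q-p)\,d\nu=0$ with the right coefficient and verify a one-variable inequality in $t=\sqrt{q/p}\ge V^{-1/2}$ --- is the correct route; writing that verification out would make the proposal fully self-contained.
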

Inequality \eqref{eqthOne} is an inexact oracle inequality in expectation that quantifies the ability of $\hat f^{\rm ML}_{\calF}$
to solve the problem of model-selection aggregation. The adjective inexact refers to the fact that the ``bias term'' $\min_{f\in\calF} \KL(f^*|| f)$
is multiplied by factor strictly larger than one. It is noteworthy that the remainder term $\frac{2\log K}{n}$ corresponds to the optimal rate
of model-selection aggregation \citep{Juditski00,Tsybakov03}. In relation with \Cref{th:1}, it is worth mentioning a result of \citep{Yang2000} and \citep{Catoni97},
see also Theorem~5 in \citep{Lecue06} and Corollary 5.4 in \citep{JRT}, establishing a risk bound similar to \eqref{eqthOne} without the
extra factor $2+\log V$ for the so called mirror averaging aggregate.

\begin{theorem}[page 226 in \citep{Rakhlin5}]\label{th:2}
Let $\calF$ be a finite dictionary of cardinality $K$ of density functions and let $\calC_k=\big\{f_{\bpi}:\|\bpi\|_0\le k\big\}$ be the set of all
the mixtures of at most $k$ elements of $\calF$ ($k\in[K]$). Assume that $f^*$ and the densities $f_k$ from $\calF$ are bounded from below and above
by some positive constants $m$ and $M$, respectively. Then, there is a constant $C$ depending only on $m$ and $M$ such that, for any tolerance level
$\delta\in(0,1)$, the maximum likelihood estimator
over $\calC_k$, $\hat f^{\rm ML}_{\calC_k}\in{\rm arg}\max_{f\in\calC_k} \sum_{i=1}^n \log f(\bX_i)$, satisfies the inequality
\begin{equation}
\KL\big(f^*||\hat f^{\rm ML}_{\calC_k}\big) \le \min_{f\in\calC_k} \KL(f^*|| f) + C\Big(\frac{\log(K/\delta)}{n}\Big)^{\nicefrac12}
\end{equation}
with probability at least $1-\delta$.
\end{theorem}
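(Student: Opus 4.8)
The plan is the classical two‑step route for maximum‑likelihood‑type estimators: convert the optimality of the MLE into a bound by a centered empirical process, then control that process by symmetrization, the whole point being that the resulting rate should not see the sparsity level $k$. Write $\hat f:=\hat f^{\rm ML}_{\calC_k}$, let $\Pb$ be the distribution with density $f^*$ and $\Pb_n$ the empirical measure of $\bX_1,\dots,\bX_n$, and pick $\bar f\in\argmin_{f\in\calC_k}\KL(f^*||f)$; the minimum is attained since $\calC_k$ is a finite union of compact simplices and $\KL(f^*||\cdot)$ is continuous on densities bounded below by $m$, and all the integrals below are finite under the stated assumptions (in fact only $f^*\le M$ is used, not $f^*\ge m$). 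By definition of the MLE, $\Pb_n\log\hat f\ge\Pb_n\log\bar f$, and since $\KL(f^*||f)=\Pb\log f^*-\Pb\log f$ for every density $f$ bounded below, subtracting the empirical inequality gives
\[
\KL(f^*||\hat f)-\KL(f^*||\bar f)=\Pb\log\bar f-\Pb\log\hat f\le(\Pb-\Pb_n)\big[\log\bar f-\log\hat f\big]\le Z,
\]
where $Z:=\sup_{f\in\calC_k}(\Pb-\Pb_n)\big[\log\bar f-\log f\big]$. It remains to prove $Z\le C(m,M)(\log(K/\delta)/n)^{1/2}$ with probability at least $1-\delta$.

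For this, note that the functions $g_f:=\log\bar f-\log f$, $f\in\calC_k$, take values in $[-\log(M/m),\log(M/m)]$, so changing one observation moves $Z$ by at most $2\log(M/m)/n$ and McDiarmid's inequality yields $Z\le\Ex[Z]+\log(M/m)(2\log(1/\delta)/n)^{1/2}$ on an event of probability at least $1-\delta$. I would bound $\Ex[Z]$ by symmetrization followed by the Ledoux--Talagrand contraction inequality, using that $t\mapsto-\log t$ is $(1/m)$‑Lipschitz on $[m,M]$:
\[
\Ex[Z]\le 2\,\Ex\sup_{f\in\calC_k}\Big|\frac1n\sum\nolimits_{i=1}^n\varepsilon_i g_f(\bX_i)\Big|\le\frac4m\,\Ex\sup_{f\in\calC_k}\Big|\frac1n\sum\nolimits_{i=1}^n\varepsilon_i f(\bX_i)\Big|+\frac{C'(m,M)}{\sqrt n},
\]
with $(\varepsilon_i)$ i.i.d.\ Rademacher signs and the last term (the harmless non‑zero constant produced by contraction) of strictly smaller order. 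The key point is that $\calC_k\subseteq\mathrm{conv}(\calF)$ and a linear functional over a convex hull is maximized at an extreme point, so the Rademacher average over $\calC_k$ is at most that over the $K$‑element set $\calF$, which Massart's finite‑class lemma bounds by $M(2\log(2K)/n)^{1/2}$ (each $f_j$ is bounded by $M$). Assembling the three displays gives $Z\le C(m,M)(\log(K/\delta)/n)^{1/2}$ and hence the stated oracle inequality.

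The one substantive difficulty is obtaining a rate that does not degrade with the sparsity level $k$: a direct covering‑number argument over $\calC_k$ --- a union of $\binom{K}{k}$ faces, each a $k$‑dimensional simplex with metric entropy of order $k\log(K/\eta)$ at scale $\eta$ --- would insert a spurious factor $k^{1/2}$ into the bound. Relaxing $\calC_k$ to the full convex hull, which costs nothing at the level of Rademacher complexity precisely because suprema of linear functionals are attained at extreme points, removes it; everything else is bookkeeping. A secondary remark: $f^*$ need not belong to $\calC_k$, but in this misspecified regime one only targets the slow rate $n^{-1/2}$ (the fast rate $n^{-1}$ being unattainable here, consistent with the lower bounds of the paper), so no localization of the empirical process is needed and the approximation term $\min_{f\in\calC_k}\KL(f^*||f)$ is simply carried along unchanged.
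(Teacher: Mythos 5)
Your proof is correct. Note that the paper itself offers no proof of this statement---it is quoted verbatim from Rakhlin, Panchenko and Mukherjee for comparison purposes---but your argument (basic MLE inequality, McDiarmid, symmetrization, Ledoux--Talagrand contraction of $-\log$, reduction of the Rademacher average over $\mathrm{conv}(\calF)$ to the extreme points, finite-class maximal inequality) is exactly the machinery the paper deploys in its own Proposition on the supremum of the empirical process (\Cref{boundEmpProcess}), and your observation that relaxing $\calC_k$ to the full convex hull is what keeps the sparsity level $k$ out of the rate is the right explanation for why the bound is $k$-free.
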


This result is remarkably elegant and can be seen as an exact oracle inequality in deviation for $D$-sparse aggregation (for $D=k$).
Furthermore, if we choose $k=K$ in \Cref{th:2}, then we get an exact oracle inequality for convex aggregation with a
rate-optimal remainder term  \citep{Tsybakov03}. However, it fails to provide the optimal rate for $D$-sparse aggregation.

Closing this section, we would like to mention the recent work \citep{Xia16}, where oracle inequalities for estimators of low rank
density matrices are obtained. They share a common feature with those obtained in this work: the adaptation to the unknown sparsity
or rank is achieved without any additional penalty term. The constraint that the unknown parameter belongs to the simplex acts
as a sparsity inducing penalty.

\subsection{Additional notation}

In what follows, for any $i\in [n]$, we denote by $\bZ_i$ the vector $[f_1(\bX_i),\ldots,f_K(\bX_i)]^\top$ and
by $\bfZ$ the $n\times K$ matrix  $[\bZ^\top_1,\ldots,\bZ^\top_n]^\top$. We also define $\ell(u) = -\log u$,
$u\in(0,+\infty)$,
so that the MLE $\hat\bpi$ is the minimizer of the function
\begin{equation}
\label{Phi}
L_n(\bpi) = \frac1n\sum_{i=1}^n \ell\big(\bZ_i^\top\bpi\big).
\end{equation}
For any set of indices $J\subseteq [K]$ and any $\bpi=(\pi_1,\dots,\pi_K)^{\top}\in\RR^K$,
we define $\bpi_J$ as the $K$-dimensional vector whose $j$-th coordinate equals $\pi_j$ if $j\in J$ and $0$ otherwise.
We denote the cardinality of any $J\subseteq[K]$ by $\vert J\vert$. For any set $J\subset \{1, \dots, K\}$ and any
constant $c\geq 0$, we introduce the compatibility constants \citep{VandeGeerConditionLasso09}
of a $K\times K$ positive semidefinite matrix $\bfA$,
\begin{align}
  \kappa_{\bfA}(J,c) &= \inf\bigg\{ \frac{c^2|J|\|\bfA^{\nicefrac12}\bv \|_2^2}{(c\|\bv_J\|_1-\|\bv_{J^c}\|_1)^2} :
        \bv\in\RR^K,\|\bv_{J^c}\|< c\|\bv_J\|_1\bigg\},\\
 \bar\kappa_\bfA(J,c) &=\inf\bigg\{\frac{|J|\|\bfA^{\nicefrac12}\bv\|_2^{2}}{\Vert \bv_{J}\Vert^{2}_{1}}:
        \bv\in\RR^{K},\Vert \bv_{J^{c}}\Vert_{1}<c\Vert \bv_{J}\Vert_{1}\bigg\}.
\end{align}
The risk bounds established in the present work involve the factors $\kappa_{\bfA}(J,3)$ and
$\bar\kappa_{\bfA}(J,1)$. One can easily check that $\bar\kappa_{\bfA}(J,3)\le \kappa_{\bfA}(J,3)
\le \frac94 \bar\kappa_{\bfA}(J,1)$. We also recall that the compatibility constants of a matrix
$\bfA$ are bounded  from below by the smallest eigenvalue of $\bfA$.

Let us fix a function $f_0:\calX\to\RR$ and denote
$\bar f_k = f_k-f_0$ and $\bar \bZ_i = [\bar f_1(\bX_i),\ldots,\bar f_K(\bX_i)]^\top$
for $i\in[n]$. In the results of this work, the compatibility factors are used for the empirical
and population Gram matrices of vectors $\bar\bZ_k$, that is when $\bfA = \hat\bSigma_n$ and
$\bfA = \bSigma$ with
\begin{equation}
\label{Gram}
\hat\bSigma_n = \frac1n \sum_{i=1}^n \bar\bZ_i\bar\bZ_i^\top,\qquad
\bSigma = \Ex[\bar\bZ_1\bar\bZ_1^\top].
\end{equation}
The general entries of these matrices are respectively $(\hat\bSigma_n)_{k,l} =
\nicefrac1n\sum_{i=1}^n \bar f_k(\bX_i)\bar f_l(\bX_i)$ and $(\bSigma)_{k,l} =
\Ex[\bar f_k(\bX_1)\bar f_l(\bX_1)]$.

We assume that there exist positive constants $m$ and $M$ such that for all densities
$f_k$ with $k \in [K]$, we have
\begin{equation}
\label{densConst}
  \forall x \in \calX, \quad m\le f_k(x) \le M.
\end{equation}
We use the notation $V = M/m$. It is worth mentioning that the set of dictionaries
satisfying simultaneously this boundedness assumption and the aforementioned compatibility
condition is not empty. For instance, one can consider the functions
$f_k(x) = 1+\nicefrac12\sin(2\pi k x)$ for $k\in [K]$. These functions are probability
densities w.r.t.\ the Lebesgue measure on $\calX = [0,1]$. They are bounded from below
and from  above by $\nicefrac12$ and $\nicefrac32$, respectively. Taking $f_0(x) = 1$,
the corresponding Gram matrix is $\bSigma = \nicefrac18\,\bfI_K$, which has all eigenvalues
equal to $\nicefrac18$.

\subsection{Agenda}

The rest of the paper is organized as follows. In \Cref{sec:main_results}, we state our
main theoretical contributions and discuss their consequences. Possible relaxations of
the conditions, as well as lower bounds showing the tightness of the established risk
bounds, are considered in \Cref{sec:discussion_of_the_results}.
%In \Cref{sec:experiments}, we report the results of some numerical experiments.
A brief summary of the paper and some future directions of research are presented
in \Cref{sec:conclusion}. The proofs of all theoretical results are postponed
to \Cref{sec:proofs} and \Cref{sec:proof-lower}.

\section{Oracle inequalities in deviation and in expectation} % (fold)
\label{sec:main_results}

In this work, we prove several non-asymptotic risk bounds that
imply, in particular, that the maximum likelihood estimator is optimal in model-selection aggregation,
convex aggregation and $D$-sparse aggregation (up to $\log$-factors). In all the results of this section
we assume the parameter $\mu$ in \eqref{Pi} to be equal to $0$.
\begin{theorem}
\label{maintheo1}
Let $\cal F$ be a set of $K\ge 4$ densities satisfying the boundedness condition \eqref{densConst}.
Denote by $f_{\hat\bpi}$ the mixture density corresponding to the maximum likelihood estimator
$\hat\bpi$ over $\Pi_n$ defined in \eqref{Phi}. There are constants $c_1\le 32V^3$,
$c_2\le 288 M^2V^6$ and $c_3\le 128 M^2V^6$ such that, for any $\delta \in (0,\nicefrac12)$, the following
inequalities hold
\begin{align}
  \KL(f^*|| f_{\hat\bpi}) &\leq \inf_{\substack{J\subset [K]\\\bpi \in \BB_+^K}}
  \bigg\{ \KL(f^*|| f_{\bpi}) + c_1\Big(\frac{\log(K/\delta)}{n}\Big)^{\nicefrac12} \|\bpi_{J^c}\|_1 +
			\frac{c_2|J|\log(K/\delta)}{n\kappa_{\hat\bSigma_n}(J,3)} \bigg\},\label{boundDeviation}\\
 \KL(f^*|| f_{\hat\bpi}) &\leq \inf_{J\subset [K]}\inf_{\substack{\bpi \in \BB_+^K\\ \bpi_{J^c}=0}}
            \bigg\{ \KL(f^*|| f_{\bpi}) +\frac{c_3|J|\log(K/\delta)}{n\bar\kappa_{\hat\bSigma_n}(J,1)}
            \bigg\} \label{boundDevTwo}
\end{align}
with probability at least $1-\delta$.
\end{theorem}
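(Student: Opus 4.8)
The plan is to establish a basic empirical-process inequality linking the Kullback--Leibler excess risk of $\hat\bpi$ to the empirical loss, and then to exploit the simplex constraint together with the compatibility condition exactly as one does for the lasso, but with the role of the penalty played by the constraint $\bpi\in\BB_+^K$.

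\textbf{Step 1: From KL to the empirical objective.} Since $\hat\bpi$ minimizes $L_n$ over $\Pi_n$, for any competitor $\bpi\in\BB_+^K\subseteq\Pi_n$ (recall $\mu=0$) we have $L_n(\hat\bpi)\le L_n(\bpi)$, i.e.\ $\frac1n\sum_i\big(\ell(\bZ_i^\top\hat\bpi)-\ell(\bZ_i^\top\bpi)\big)\le 0$. The first task is to convert this into a statement about $\KL(f^*\| f_{\hat\bpi})-\KL(f^*\| f_{\bpi})$. Writing $g_{\bpi}(\bx)=\log\big(f_{\bpi}(\bx)/f^*(\bx)\big)$ and noting $\KL(f^*\|f_{\bpi})=-\Ex_{f^*}[g_{\bpi}(\bX)]$ while $L_n(\bpi)=\text{const}-\frac1n\sum_i g_{\bpi}(\bX_i)$, the gap between population and empirical versions of $g_{\hat\bpi}-g_{\bpi}$ must be controlled uniformly over the simplex. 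The boundedness assumption \eqref{densConst} ensures $\bZ_i^\top\bpi\in[m,M]$ and $f^*/f_{\bpi}$ is bounded (here one needs $f^*$ essentially bounded, or one works directly with ratios), so $\ell$ is Lipschitz and smooth on the relevant range; this is where the factors of $V=M/m$ enter $c_1,c_2,c_3$. I would use a Bernstein-type concentration together with the fact that $\log(f_{\hat\bpi}/f_{\bpi})$ is a linear functional of the difference vector $\hat\bpi-\bpi$ composed with a bounded smooth function, to get that, with probability $\ge 1-\delta$, for all $\bpi\in\BB_+^K$,
\begin{equation*}
\KL(f^*\|f_{\hat\bpi}) \le \KL(f^*\|f_{\bpi}) + c\,\Big(\tfrac{\log(K/\delta)}{n}\Big)^{\nicefrac12}\,\big\|\bfZ(\hat\bpi-\bpi)\big\|_{\text{emp}} + (\text{lower order}),
\end{equation*}
or more usefully a ``margin'' inequality of the form $\KL(f^*\|f_{\hat\bpi}) + c'\,\|\hat\bpi-\bpi\|_{\hat\bSigma_n}^2 \le \KL(f^*\|f_{\bpi}) + c\sqrt{\log(K/\delta)/n}\;\|\hat\bpi-\bpi\|_1$, using that the KL divergence between two densities bounded away from $0$ dominates a quadratic form in the difference of the mixture weights (a lower-bound-on-curvature argument), and that $\max_k\|\bar f_k\|_\infty\le M$ gives the $\ell_1$/sup duality bound on the empirical process term with rate $\sqrt{\log K/n}$.

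\textbf{Step 2: The cone argument.} Fix $J$ and $\bpi\in\BB_+^K$. The crucial observation—specific to the simplex, and the reason no penalty is needed—is that both $\hat\bpi$ and $\bpi$ have nonnegative coordinates summing to $1$, so $\sum_j(\hat\pi_j-\pi_j)=0$ and hence $\|(\hat\bpi-\bpi)_{J^c}\|_1 \le \|(\hat\bpi)_{J^c}\|_1 + \|\bpi_{J^c}\|_1$ with $\sum_{j\in J^c}\hat\pi_j = \sum_{j\in J}(\pi_j-\hat\pi_j)+\sum_{j\in J^c}\pi_j \le \|(\hat\bpi-\bpi)_J\|_1 + \|\bpi_{J^c}\|_1$. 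Plugging this into the margin inequality from Step~1 and separating the $\|\bpi_{J^c}\|_1$ term (which becomes the second term of \eqref{boundDeviation}), one arrives at the dichotomy: either the excess KL risk is already dominated by $\KL(f^*\|f_{\bpi}) + c_1\sqrt{\log(K/\delta)/n}\,\|\bpi_{J^c}\|_1$ and we are done, or the difference $\bv=\hat\bpi-\bpi$ (up to the $\bpi_{J^c}$ correction) lies in the cone $\|\bv_{J^c}\|_1 \le 3\|\bv_J\|_1$, so the compatibility constant $\kappa_{\hat\bSigma_n}(J,3)$ applies: $\|\bv_J\|_1^2 \le |J|\,\|\bv\|_{\hat\bSigma_n}^2/\kappa_{\hat\bSigma_n}(J,3)$. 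Combining $c\sqrt{\log(K/\delta)/n}\,\|\bv_J\|_1 \le c'\|\bv\|_{\hat\bSigma_n}^2 + \tfrac{c''|J|\log(K/\delta)}{n\kappa_{\hat\bSigma_n}(J,3)}$ (AM--GM) with the quadratic term absorbed on the left yields \eqref{boundDeviation}. For \eqref{boundDevTwo} one restricts to $\bpi_{J^c}=0$, so $\|\bv_{J^c}\|_1 = \|(\hat\bpi)_{J^c}\|_1 \le \|\bv_J\|_1$ directly (the constant is $1$, not $3$, because there is no $\bpi_{J^c}$ slack), one uses $\bar\kappa_{\hat\bSigma_n}(J,1)$ via $\|\bv_J\|_1^2\le|J|\|\bv\|_{\hat\bSigma_n}^2/\bar\kappa_{\hat\bSigma_n}(J,1)$, and there is no $\sqrt{\cdot}\,\|\bpi_{J^c}\|_1$ term at all, giving the cleaner bound.

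\textbf{Main obstacle.} The delicate point is Step~1: converting the one-sided empirical inequality $L_n(\hat\bpi)\le L_n(\bpi)$ into a deviation bound on the KL risk with the correct $\sqrt{\log K/n}$ scaling on the $\ell_1$ term and a genuine quadratic curvature term $\|\hat\bpi-\bpi\|_{\hat\bSigma_n}^2$ on the left-hand side. This requires (i) a lower bound on $\KL$ in terms of a squared empirical norm of the weight difference, valid because densities are pinched in $[m,M]$—essentially a second-order Taylor expansion of $u\mapsto-\log u$ with a uniformly positive second derivative on $[m,M]$—and (ii) a uniform (over the simplex) control of the multiplier empirical process $\sup_{\bpi}\big|\frac1n\sum_i(g_{\hat\bpi}-g_{\bpi})(\bX_i) - \Ex[(g_{\hat\bpi}-g_{\bpi})(\bX)]\big|$ reduced, via the smoothness of $\ell$ and linearity of $\bpi\mapsto\bZ_i^\top\bpi$, to controlling $\max_{k}\big|\frac1n\sum_i \bar f_k(\bX_i) - \Ex[\bar f_k(\bX)]\big|$ together with a self-bounded quadratic remainder; a Bernstein inequality and a union bound over $k\in[K]$ produce the $\log(K/\delta)$ factor, and the remainder terms, all controlled by powers of $M$ and $V$, account for the explicit constants $c_1\le 32V^3$, $c_2\le 288M^2V^6$, $c_3\le 128M^2V^6$. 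Handling the fact that $\hat\bpi$ is itself random (so one cannot simply bound a fixed increment) is done by the usual ``peeling'' or by absorbing the supremum into the argument, which is routine once the self-bounding structure is in place.
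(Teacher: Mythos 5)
Your overall architecture is the same as the paper's: exploit the strong convexity of $u\mapsto-\log u$ on $(0,M]$ together with the optimality of $\hat\bpi$ to get the quadratic margin term $\frac{1}{2M^2n}\|\bar\bfZ(\hat\bpi-\bpi)\|_2^2$ (the paper does this in \Cref{convexlemma}, via the KKT condition at $\hat\bpi$ applied to the convexified objective $L_n(\bpi)-\frac1{2M^2n}\|\bfZ(\hat\bpi-\bpi)\|_2^2$, not via a lower bound on the population KL — your phrasing conflates the two, and the population version would produce $\bSigma$ rather than $\hat\bSigma_n$, which is \Cref{maintheo2}); then decompose into $\KL$ plus a centered empirical process bounded by $\zeta_n\|\hat\bpi-\bpi\|_1$; then use the simplex identity $\|\bpi\|_1=\|\hat\bpi\|_1=1$ to place $\hat\bpi-\bpi$ in a cone and invoke the compatibility constant plus AM--GM (the paper outsources this to Lemma~2 of \citep{BDGP} for \eqref{boundDeviation} and does your hand computation \eqref{inqCone} for \eqref{boundDevTwo}). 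Steps 2 and the curvature part of Step 1 are correct and match the paper.

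The genuine gap is the uniform control of the noise term, which is the technical crux. You claim the multiplier process reduces, ``via the smoothness of $\ell$ and linearity of $\bpi\mapsto\bZ_i^\top\bpi$,'' to $\max_{k}\big|\frac1n\sum_i \bar f_k(\bX_i)-\Ex[\bar f_k]\big|$ plus a self-bounded remainder handled by Bernstein, a union bound over $[K]$, and ``routine peeling.'' This does not go through as stated: after the mean value theorem one must bound $\zeta_n=\sup_{\bpi\in\Pi_n}\|\nabla\Phi_n(\bpi)\|_\infty$, whose components are centered averages of the ratios $f_l(\bX_i)/f_{\bpi}(\bX_i)$, and these are \emph{not} linear in $\bpi$ (the weight vector sits in the denominator), so the supremum over the simplex is not attained at a vertex and a union bound over finitely many fixed functions is unavailable. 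The paper's resolution (\Cref{boundEmpProcess}) is: McDiarmid's bounded-difference inequality to concentrate the supremum around its mean, symmetrization, and then the Ledoux--Talagrand contraction principle applied to $\psi(x)=(1+x)^{-1}-1$ (Lipschitz with constant $V^2$ on the relevant range), which converts $\sup_{\bpi,l}|\frac1n\sum_i\epsilon_i f_l(\bX_i)/f_{\bpi}(\bX_i)|$ into $\sup_{\bpi,l}|\frac1n\sum_i\epsilon_i f_{\bpi}(\bX_i)/f_l(\bX_i)|$; the latter \emph{is} linear in $\bpi$, hence maximized at a vertex, reducing everything to a Hoeffding-type maximal inequality over $K^2$ pairs $(k,l)$. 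This contraction step is precisely where the factor $V^3$ in $c_1$ (and hence $V^6$ in $c_2,c_3$) originates, and without it or an equivalent device (e.g.\ chaining over the simplex) your Step~1 bound on the noise term, and therefore both \eqref{boundDeviation} and \eqref{boundDevTwo}, is not established.
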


The proof of this and the subsequent results stated in this section are postponed to \Cref{sec:proofs}.
Comparing the two inequalities of the above theorem, one can notice two differences. First, the term
proportional to $\|\bpi_{J^c}\|_1$  is absent in the second risk bound, which means that
the risk of the MLE is compared to that of the best mixture with a weight sequences supported by $J$.
Hence, this risk bound is weaker than the first one provided by \eqref{boundDeviation}. Second,
the compatibility factor $\bar\kappa_{\hat\bSigma_n}(J,1)$ in \eqref{boundDevTwo} is larger that
its counterpart $\kappa_{\hat\bSigma_n}(J,3)$ in \eqref{boundDeviation}. This entails that in the cases
where the oracle is expected to be sparse, the remainder term of the bound in \eqref{boundDeviation}
is slightly looser than that of \eqref{boundDevTwo}.

A first and simple consequence of \Cref{th:1} is obtained by taking $J=\varnothing$ in the right hand
side of the first inequality. Then, $\|\bpi_{J^c}\|_1=\|\bpi\|_1=1$ and we get
\begin{equation}
  \KL(f^*|| f_{\hat\bpi}) \leq \inf_{\bpi \in \BB_+^K}
  \KL(f^*|| f_{\bpi}) + c_1\Big(\frac{\log(K/\delta)}{n}\Big)^{\nicefrac12}.\label{convOracle}
\end{equation}
This implies that for every dictionary $\cal F$, without any assumption on the smallness of the coherence
between its elements, the maximum likelihood estimator achieves the optimal rate of convex aggregation, up
to a possible\footnote{In fact, the optimal rate of convex aggregation when $K\ge n^{\nicefrac12}$ is of
order $\normalsize\big(\nicefrac{\log(K/n^{\nicefrac12})}{\displaystyle n}\big)^{\nicefrac12}$. Therefore, even the
$\log K$ term is optimal whenever $K\ge C n^{\nicefrac12+\alpha}$ for some $\alpha>0$.} logarithmic
correction, in the high-dimensional regime $K\ge n^{\nicefrac12}$. In the case of regression with random
design, an analogous result has been proved by~\cite{LecueMend13} and \cite{Lecue13}.
One can also remark that the upper bound in \eqref{convOracle} is of the same form as the one of \Cref{th:2}
stated in~\cref{ssec:rel_work} above.

The main compelling feature  of our results is that they show that the MLE adaptively achieves the optimal rate of
aggregation not only in the case of convex aggregation, but also for the model-selection aggregation and $D$-(convex)
aggregation. For handling these two cases, it is more convenient to get rid of the presence of the compatibility
factor of the empirical Gram matrix $\hat\bSigma_n$.  The latter can be replaced by the compatibility factor of
the population Gram matrix, as stated in the next result.

\begin{theorem}
\label{maintheo2}
Let $\cal F$ be a set of $K$ densities satisfying the boundedness condition \eqref{densConst}.
Denote by $f_{\hat\bpi}$ the mixture density corresponding to the maximum likelihood estimator
$\hat\bpi$ over $\Pi_n$ defined in \eqref{Phi}. There are constants $c_4\le 32V^3 + 4$,
$c_5\le 4.5M^2(8\,V^3+1)^2$ and $c_6\le 2M^2(8\,V^3+1)^2$ such that, for any
$\delta \in (0,\nicefrac12)$, the following inequalities hold
\begin{align}
  \KL(f^*|| f_{\hat\bpi}) &\leq \inf_{\substack{J\subset [K]\\ \bpi \in \BB_+^K}}
		\bigg\{ \KL(f^*|| f_{\bpi}) + c_4\Big(\frac{\log(K/\delta)}{n}\Big)^{\nicefrac12} \|\bpi_{J^c}\|_1 +
			\frac{c_5|J|\log(K/\delta)}{n\kappa_{\bSigma}(J,3)} \bigg\},\label{boundDevThree}\\
 \KL(f^*|| f_{\hat\bpi}) &\leq \inf_{J\subset [K]}\inf_{\substack{\bpi \in \BB_+^K\\ \bpi_{J^c}=0}}
		\bigg\{ \KL(f^*|| f_{\bpi}) +
			\frac{c_6|J|\log(K/\delta)}{n\bar\kappa_{\bSigma}(J,1)} \bigg\}\label{boundDevFour}
\end{align}
with probability at least $1-2\delta$.
\end{theorem}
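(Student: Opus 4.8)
The plan is to re-run the argument underlying \Cref{maintheo1}, changing only the one place where the compatibility factor of the \emph{empirical} Gram matrix $\hat\bSigma_n$ enters, and producing there the factor of the \emph{population} Gram matrix $\bSigma$ instead. It is worth stressing at the outset that this cannot be done by treating \Cref{maintheo1} as a black box: for an adversarially chosen $J$ the empirical compatibility constant $\kappa_{\hat\bSigma_n}(J,3)$ may be far smaller than $\kappa_{\bSigma}(J,3)$, so \eqref{boundDeviation} can be strictly weaker than \eqref{boundDevThree}. Hence the passage to $\bSigma$ has to be carried out at the level of the quadratic form $\bu^\top\hat\bSigma_n\bu$ itself (with $\bu=\hat\bpi-\bpi$), before it is converted into a compatibility constant.

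\emph{Step 1 (concentration of the Gram matrix).} For the bounded reference function $f_0$ used in the proof of \Cref{maintheo1} — one may take $f_0=f_\bpi$ for the oracle $\bpi$, whence $m\le f_0\le M$ and $|\bar f_k(x)|\le M$ for all $k\in[K]$, $x\in\calX$ — each entry $(\hat\bSigma_n)_{k,l}=\tfrac1n\sum_{i=1}^n\bar f_k(\bX_i)\bar f_l(\bX_i)$ is an average of i.i.d.\ variables bounded in absolute value by $M^2$, with mean $(\bSigma)_{k,l}$. Hoeffding's inequality and a union bound over the at most $K^2$ entries produce an event $\mathcal E_\ast$ of probability at least $1-\delta$ on which $\|\hat\bSigma_n-\bSigma\|_{\max}\le t_n$, where $t_n$ is of order $M^2\sqrt{\log(K/\delta)/n}$ (we use $K\ge 4$ to absorb the factor $\log K^2$ into $\log(K/\delta)$).

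\emph{Step 2 (replacing $\hat\bSigma_n$ by $\bSigma$).} In the proof of \Cref{maintheo1}, the factor $\kappa_{\hat\bSigma_n}(J,3)$ (resp.\ $\bar\kappa_{\hat\bSigma_n}(J,1)$ in \eqref{boundDevTwo}) arises because a curvature term of the form $M^{-2}\bu^\top\hat\bSigma_n\bu$ — a lower bound for the relevant Bregman divergence, coming from $\ell''(u)=u^{-2}\ge M^{-2}$ on $[m,M]$ and from $\sum_{k=1}^K u_k=0$ (so that $\bZ_i^\top\bu=\bar\bZ_i^\top\bu$) — is balanced against a term $\lambda\|\bu\|_1$ with $\lambda$ of order $V^3\sqrt{\log(K/\delta)/n}$, the cone-type inequality satisfied by $\bu$ then turning this balance into the stated remainder. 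At this point, and on $\mathcal E_\ast$, I would insert
\[
\bu^\top\hat\bSigma_n\bu\ \ge\ \bu^\top\bSigma\bu-t_n\|\bu\|_1^2\ \ge\ \bu^\top\bSigma\bu-2t_n\|\bu\|_1,
\]
the last inequality using the \emph{a priori} bound $\|\bu\|_1\le\|\hat\bpi\|_1+\|\bpi\|_1=2$, valid precisely because $\hat\bpi,\bpi\in\BB_+^K$. The extra summand $2t_n\|\bu\|_1$ is linear in $\|\bu\|_1$, so it can be absorbed into the existing $\lambda\|\bu\|_1$-type term; this merely enlarges the effective noise level, which (with the constant in $t_n$ tracked appropriately) amounts to replacing the numerical factor $8V^3$ that governs $c_1\le 4\cdot 8V^3$, $c_2\le 4.5M^2(8V^3)^2$, $c_3\le 2M^2(8V^3)^2$ by $8V^3+1$. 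Carrying the rest of the argument through verbatim, now with $\bSigma$ — hence $\kappa_{\bSigma}(J,3)$ in \eqref{boundDevThree} and $\bar\kappa_{\bSigma}(J,1)$ in \eqref{boundDevFour} — yields $c_4\le 32V^3+4$, $c_5\le 4.5M^2(8V^3+1)^2$ and $c_6\le 2M^2(8V^3+1)^2$; intersecting $\mathcal E_\ast$ with the probability-$(1-\delta)$ event of \Cref{maintheo1} gives probability at least $1-2\delta$.

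The main obstacle is the structural observation above rather than any single hard estimate: one must reopen the proof of \Cref{maintheo1} rather than invoke it, and the substitution of $\bSigma$ for $\hat\bSigma_n$ is legitimate only because $\|\hat\bpi-\bpi\|_1=O(1)$ on the simplex, which downgrades the perturbation $\|\hat\bSigma_n-\bSigma\|_{\max}\|\hat\bpi-\bpi\|_1^2$ from quadratic to linear in $\|\hat\bpi-\bpi\|_1$ and thereby makes it absorbable. The remaining pieces — the Hoeffding bound of Step~1 and the tracking of $c_4,c_5,c_6$ — are routine.
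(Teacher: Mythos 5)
Your proposal is correct and follows essentially the same route as the paper: the paper also controls $\|\hat\bSigma_n-\bSigma\|_{\infty}$ by Hoeffding plus a union bound over the $K^2$ entries, bounds $\bv^\top(\bSigma-\hat\bSigma_n)\bv\le 2\|\bSigma-\hat\bSigma_n\|_{\infty}\|\bv\|_1$ using $\|\bv\|_1\le 2$ on the simplex, and absorbs this linear term into the noise level by replacing $\zeta_n$ with $\bar\zeta_n=\zeta_n+M^{-2}\|\bSigma-\hat\bSigma_n\|_{\infty}\le(8V^3+1)(\log(K/\delta)/n)^{\nicefrac12}$ before rerunning the end of the proof of \Cref{maintheo1} with $\bSigma$ in place of $\hat\bSigma_n$. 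Your structural observation that the theorem cannot be obtained from \Cref{maintheo1} as a black box, and that the key point is the downgrade of the perturbation from quadratic to linear in $\|\hat\bpi-\bpi\|_1$, is exactly the mechanism the paper uses.
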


The main advantage of the upper bounds provided by \Cref{maintheo2} as compared with those of
\Cref{maintheo1} is that the former is deterministic, whereas the latter involves the
compatibility factor of the empirical Gram matrix which is random. The price to pay for getting
rid of randomness in the risk bound is the increased values of the constants $c_4$, $c_5$ and $c_6$.
Note, however, that this price is not too high, since obviously $1\le M\le L$ and, therefore,
$c_4\le 1.25 c_1$, $c_5\le 1.56 c_2$ and $c_6\le 1.56 c_3$. In addition, the absence of randomness
in the risk bound allows us to integrate it and to convert the bound in deviation into a bound
in expectation.

\begin{theorem}[Bound in Expectation]
\label{th:expectation}
Let $\cal F$ be a set of $K$ densities satisfying the boundedness condition \eqref{densConst}.
Denote by $f_{\hat\bpi}$ the mixture density corresponding to the maximum likelihood estimator
$\hat\bpi$ over $\Pi_n$ defined in \eqref{Phi}. There are constants $c_7\le 20V^3 + 8$,
$c_8\le M^2(22V^3+3)^2$ and $c_9\le M^2(15V^3+2)^2$ such that
\begin{align}
  \Ex[\KL(f^*|| f_{\hat\bpi})] &\leq \inf_{\substack{J\subset [K]\\ \bpi \in \BB_+^K}}
  \bigg\{ \KL(f^*|| f_{\bpi}) + c_7\Big(\frac{\log K}{n}\Big)^{\nicefrac12} \|\bpi_{J^c}\|_1 +
			\frac{c_8|J|\log K}{n\kappa_{\bSigma}(J,3)} \bigg\},\label{boundExpOne}\\
 \Ex[\KL(f^*|| f_{\hat\bpi})] &\leq \inf_{J\subset [K]}\inf_{\substack{\bpi \in \BB_+^K\\
		\bpi_{J^c}=0}} \bigg\{ \KL(f^*|| f_{\bpi}) +
			\frac{c_9|J|\log K}{n\bar\kappa_{\bSigma}(J,1)} \bigg\}.\label{boundExpTwo}
\end{align}
\end{theorem}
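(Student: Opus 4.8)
The plan is to deduce the bound in expectation by integrating the deviation bound of \Cref{maintheo2}. Fix any set $J\subseteq[K]$ and any $\bpi\in\BB_+^K$. Since for this fixed pair the infimum in \eqref{boundDevThree} is no larger than the corresponding summand, \Cref{maintheo2} gives, with the shorthands $R=\KL(f^*||f_{\hat\bpi})$, $a=\KL(f^*||f_\bpi)$, $b=c_4\|\bpi_{J^c}\|_1/\sqrt n$ and $c=c_5|J|/(n\kappa_{\bSigma}(J,3))$, that for every $\delta\in(0,\nicefrac12)$
\[
\PP\big(R> a+b\sqrt{\log(K/\delta)}+c\log(K/\delta)\big)\le 2\delta.
\]
Reparametrising by $t=\log(K/\delta)$, which ranges over $(\log 2K,\infty)$, this reads $\PP(R>\varphi(t))\le 2Ke^{-t}$ with $\varphi(t)=a+b\sqrt t+ct$ strictly increasing (the degenerate case $b=c=0$ being trivial).

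Next I would apply the layer-cake formula $\Ex[R]=\int_0^\infty\PP(R>x)\,dx$ and split at $x_0:=\varphi(\log 2K)=a+b\sqrt{\log 2K}+c\log 2K$. On $[0,x_0]$ bound $\PP(R>x)\le 1$, which contributes $x_0$. On $(x_0,\infty)$ substitute $x=\varphi(t)$, so $dx=(b/(2\sqrt t)+c)\,dt$ and $\PP(R>x)\le 2Ke^{-t}$ with $t>\log 2K$; since $2Ke^{-\log 2K}=1$, the further substitution $t=\log 2K+v$ gives
\[
\int_{x_0}^\infty\PP(R>x)\,dx\le\int_0^\infty e^{-v}\Big(\frac{b}{2\sqrt{\log 2K+v}}+c\Big)\,dv\le\frac{b}{2\sqrt{\log 2K}}+c.
\]
Hence $\Ex[R]\le a+b\big(\sqrt{\log 2K}+\tfrac1{2\sqrt{\log 2K}}\big)+c(\log 2K+1)$. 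Using $K\ge 2$ (so that $\log 2K\le 2\log K$ and $\log 2K\ge\log 4>1$) the two bracketed factors are bounded by absolute multiples of $\sqrt{\log K}$ and $\log K$, respectively; substituting back the definitions of $a,b,c$ and then taking the infimum over $J$ and $\bpi$ (legitimate since the left-hand side is a fixed number) yields an inequality of the form \eqref{boundExpOne}. The bound \eqref{boundExpTwo} is obtained the same way from \eqref{boundDevFour}, with $b=0$, so the computation is even shorter.

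The main obstacle is not the structure of the argument — integrating a sub-exponential tail is routine — but getting the constants $c_7,c_8,c_9$ as small as claimed. The naive integration above loses a factor of order $\sqrt2$ when passing from $\log 2K$ to $\log K$, together with the lower-order $1/(2\sqrt{\log 2K})$ and $+c$ corrections, so it produces constants that are somewhat \emph{larger} than $c_4,c_5,c_6$, whereas the statement claims smaller ones. To recover the sharper constants one should instead return to the proof of \Cref{maintheo2} and, at the step where the relevant maximum over the $K$ dictionary directions of a centred bounded (hence sub-Gaussian) empirical process is controlled with probability $1-\delta$ by a multiple of $\sqrt{\log(K/\delta)/n}$, bound its \emph{expectation} directly through a standard maximal inequality, which gives the cleaner $\sqrt{(2\log K)/n}$-type quantity with no $\delta$ and better absolute constants; the remainder of the chain of inequalities leading to \eqref{boundDevThree}–\eqref{boundDevFour} then carries over verbatim in expectation. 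In either route the only delicate point is the constant bookkeeping; that the rate is $((\log K)/n)^{\nicefrac12}\|\bpi_{J^c}\|_1+|J|(\log K)/(n\kappa_{\bSigma})$ is immediate.
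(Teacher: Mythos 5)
Your first route---fixing $(J,\bpi)$, reading \Cref{maintheo2} as the tail bound $\PP\big(R>a+b\sqrt t+ct\big)\le 2Ke^{-t}$ for $t>\log 2K$ and integrating via the layer-cake formula---is a complete and correct proof of the theorem up to the values of the constants, and it is genuinely different from what the paper does. The paper never integrates the tail: it returns to the pointwise inequality \eqref{oracleIneqThTwo}, which holds realization by realization with the random quantity $\bar\zeta_n$ on the right-hand side, takes expectations to obtain \eqref{eqSixThreeOne}, and then bounds $\Ex[\bar\zeta_n]$ and $\Ex[\bar\zeta_n^2]$ directly from the symmetrization/contraction bounds of \Cref{boundEmpProcess} and the Hoeffding-type maximal inequality for $\Ex[\|\hat\bSigma_n-\bSigma\|_\infty]$. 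So your ``second route'' is exactly the paper's route, and your diagnosis of why it yields cleaner constants (no $\sqrt2$-type loss from taking $\delta\asymp 1/K$ in $\log(K/\delta)$, no additive corrections from the tail integral) is accurate. What the tail-integration route buys in exchange is that it uses \Cref{maintheo2} purely as a black box.

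The one substantive gap is in the claim that, once the relevant maximum is bounded in expectation, ``the remainder of the chain of inequalities carries over verbatim in expectation.'' It does not quite: the quadratic term in \eqref{oracleIneqThTwo} produces $\Ex[\bar\zeta_n^2]$ after taking expectations, and Jensen goes the wrong way, so a maximal inequality for $\Ex[\bar\zeta_n]$ alone is insufficient. The paper closes this by writing $\Ex[\bar\zeta_n^2]\le(\Ex[\bar\zeta_n])^2+\big\{(\var[\zeta_n])^{\nicefrac12}+M^{-2}(\var[\|\hat\bSigma_n-\bSigma\|_\infty])^{\nicefrac12}\big\}^2$ and controlling both variances by the Efron--Stein inequality, which is available because the bounded-differences property of these suprema is already established in the proof of \Cref{boundEmpProcess} (giving $\var[\zeta_n]\le V^2/(2n)$ and $\var[\|\hat\bSigma_n-\bSigma\|_\infty]\le M^4/(2n)$). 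This is a small and fillable step, but it is a genuinely needed ingredient that your sketch of the second route omits. A minor point for the first route: integrability of $\KL(f^*||f_{\hat\bpi})$ should be noted, e.g.\ via $\KL(f^*||f_{\hat\bpi})\le\KL(f^*||f_{\bpi})+\log V$ whenever the latter is finite, since both mixture densities take values in $[m,M]$.
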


In inequality \eqref{boundExpTwo}, upper bounding the infimum over all sets $J$ by the infimum
over the singletons, we get
\begin{equation}
 \Ex[\KL(f^*|| f_{\hat\bpi})] \leq \inf_{j\in [K]}\bigg\{ \KL(f^*|| f_j) +
			\frac{c_9\log K}{n\bar\kappa_{\bSigma}(J,1)} \bigg\}.\label{MSaggr}
\end{equation}
This implies that the maximum likelihood estimator $f_{\hat\pi}$ achieves the
rate $\frac{\log K}{n}$ in model-selection type aggregation. This rate is known
to be optimal in the model of regression \citep{Rigollet12}. If we compare this result
with \Cref{th:1} stated in \Cref{ssec:rel_work}, we see that the remainder terms
of these two oracle inequalities are of the same order (provided that the compatibility
factor is bounded away from zero), but inequality \eqref{MSaggr} has the advantage
of being exact.

We can also apply  \eqref{boundExpTwo} to the problem of convex aggregation with
small dictionary, that is for $K$ smaller than  $n^{\nicefrac12}$. Upper bounding $|J|$ by $|K|$,
we get
\begin{equation}
 \Ex[\KL(f^*|| f_{\hat\bpi})] \leq \inf_{\bpi \in \BB_+^K}
		\KL(f^*|| f_{\bpi}) +\frac{c_9 K \log K}{n\bar\kappa_{\bSigma}([K],1)}.
		\label{Caggr}
\end{equation}
Assuming, for instance, the smallest eigenvalue of $\bSigma$ bounded away from zero (which
is a quite reasonable assumption in the context of low dimensionality), the above upper
bound provides a rate of convex aggregation of the order of $\frac{K\log K}{n}$. Up to a
logarithmic term, this rate is known to be optimal for convex aggregation in the model of
regression.

Finally, considering all the sets $J$ of cardinal smaller than $D$ (with $D\le K$) and
setting $\bar\kappa_{\bSigma}(D,1) = \inf_{J:|J|\le D} \bar\kappa_{\bSigma}(J,1)$, we
deduce from \eqref{boundExpTwo} that
\begin{equation}
  \Ex[\KL(f^*|| f_{\hat\bpi})] \leq \inf_{\bpi\in \BB_+^K : \|\bpi\|_0\le D}
  \KL(f^*|| f_{\bpi}) + \frac{c_9 D\log K}{n\bar\kappa_{\bSigma}(D,1)}.\label{Daggr}
\end{equation}
According to \citep[Theorem 5.3]{RT11}, in the regression model, the optimal
rate of $D$-sparse aggregation is of order $(D/n)\log(K/D)$, whenever $D=o(n^{\nicefrac12})$.
Inequality \eqref{Daggr} shows that the maximum likelihood estimator over the simplex
achieves this rate up to a logarithmic factor. Furthermore, this logarithmic inflation
disappears when the sparsity $D$ is such that, asymptotically, the ratio $\frac{\log D}{\log K}$
is bounded from above by a constant $\alpha<1$. Indeed, in such a situation the optimal
rate $\frac{D\log(K/D)}{n} = \frac{D\log K}{n}(1-\frac{\log D}{\log K})$ is of the same
order as the remainder term in \eqref{Daggr}, that is $\frac{D\log K}{n}$.
			
% section main_results (end)

%%%%%%%%%%%%%%%%%%%%% ================================================== %%%%%%%%%%%%%%%%%%%%%
\section{Discussion of the conditions and possible extensions} % (fold)
\label{sec:discussion_of_the_results}
%%%%%%%%%%%%%%%%%%%%% ================================================== %%%%%%%%%%%%%%%%%%%%%

In this section, we start by announcing lower bounds for the Kullback-Leibler aggregation
in the problem of density estimation. Then we discuss the implication of the risk bounds of the
previous section to the case where the target is the weight vector $\bpi$ rather than the
mixture density $f_{\bpi}$. Finally, we present some extensions to the case where the boundedness
assumption is violated.

\subsection{Lower bounds for nearly-$D$-sparse aggregation}
\label{ssec:lower}

As mentioned in previous section, the literature is replete with lower bounds on the minimax
risk for various types of aggregation. However most of them concern the regression setting either
with random or with deterministic design. Lower bounds of aggregation for density estimation were
first established by \cite{rigollet:these} for the $L_2$-loss. In the case of Kullback-Leibler
aggregation in density estimation, the only lower bounds we are aware are those established
by \cite{Lecue06} for model-selection type aggregation. It is worth emphasizing here that the
results of the aforementioned two papers provide weak lower bounds. Indeed, they establish
the existence of a dictionary for which the minimax excess risk is lower bounded by the
suitable quantity. In contrast with this, we establish here strong lower bounds that hold
for every dictionary satisfying the boundedness and the compatibility conditions.

Let $\calF = \{f_1,\ldots,f_K\}$ be a dictionary of density functions on $\calX =[0,1]$.
We say that the dictionary $\calF$ satisfies the boundedness and the compatibility assumptions
if for some positive constants $m,M$ and $\kappa$, we have $m \le f_j(x)\le M $ for all
$j\in[K]$, $x\in\calX$. In addition,  we assume in this subsection that all the eigenvalues
of the Gram matrix $\bSigma$ belong to the interval $[\varkappa_*,\varkappa^*]$, with
$\varkappa_*>0$ and $\varkappa^*<\infty$.

For every $\gamma \in (0,1)$  and any $D\in [K]$, we define the set of nearly-$D$-sparse
convex combinations of the dictionary elements $f_j\in\calF$ by
\begin{equation}
\calH_\calF(\gamma,D) = \Big\{f_{\bpi} : \bpi\in\BB^K_+ \text{ such that }\exists \, J\subset [K]
\text{ with } \|\bpi_{J^c}\|_1\le \gamma \text{ and } |J|\le D\Big\}.
\end{equation}
In simple words, $f_{\bpi}$ belongs to $\calH_\calF(\gamma,D)$ if it admits a $\gamma$-approximately
$D$-sparse representation in the dictionary $\calF$. We are interested in bounding  from
below the minimax excess risk
\begin{equation}
\calR\big(\calH_\calF(\gamma,D)\big) = \inf_{\hat f} \sup_{f^*}
\Big\{\Ex[\KL(f^*||\,\hat f\,)] - \inf_{f_{\bpi}\in \calH_\calF(\gamma,D)} \KL(f^*|| f_{\bpi})\Big\},
\end{equation}
where the $\inf$ is over all possible estimators of $f^*$ and the $\sup$ is over all density functions
over $[0,1]$. Note that the estimator $\hat f$ is not necessarily a convex combination of the
dictionary elements. Furthermore, it is allowed to depend on the parameters $\gamma$ and $D$
characterizing the class $\calH_\calF(\gamma,D)$. It follows from \eqref{boundExpOne}, that if
the dictionary satisfies the boundedness and the compatibility condition, then
\begin{equation}\label{upper}
\calR\big(\calH_\calF(\gamma,D)\big) \le C \Big\{\Big(\frac{\gamma^2\log K}{n}\Big)^{\nicefrac12} + \frac{D\log K}{n}\Big\}\bigwedge \Big(\frac{\log K}{n}\Big)^{\nicefrac12},
\end{equation}
for some constant $C$ depending only on $m,M$ and $\varkappa_*$. Note that the last term
accounts for the following phenomenon: If the sparsity index $D$ is larger than a multiple
of $\sqrt{n}$, then the sparsity bears no advantage as compared to the $\ell_1$ constraint.
The next result implies that this upper bound is optimal, at least up to logarithmic
factors.

\begin{theorem}
\label{theorem:lower_bound}
Assume that $\log(1+eK)\le n$. Let $\gamma\in(0,1)$ and $D\in[K]$ be fixed. There exists
a constant $A$ depending only on $m$, $M$, $\varkappa_*$ and  $\varkappa^*$ such that
%the following claim is true. If $D\le A_1 \sqrt{n/\log K}$ then
\begin{equation}
\calR(\calH_\calF(\gamma,D)) \ge A \bigg\{\bigg[\frac{\gamma^2}{n}
\log\bigg(1+\frac{K}{\gamma\sqrt{n}}\bigg)\bigg]^{\nicefrac12}
+ \frac{D\log(1+K/D)}{n}\bigg\}\bigwedge \bigg[\frac{1}{n}
\log\bigg(1+\frac{K}{\sqrt{n}}\bigg)\bigg]^{\nicefrac12}.
\end{equation}
\end{theorem}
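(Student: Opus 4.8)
The plan is to establish the lower bound by a reduction to hypothesis testing over a carefully constructed finite family of mixture densities, treating the two regimes — the ``sparse'' term $\frac{D\log(1+K/D)}{n}$ and the ``dense/$\ell_1$'' term $\big[\frac{\gamma^2}{n}\log(1+\tfrac{K}{\gamma\sqrt n})\big]^{1/2}$ — separately and then combining them. The overall minimax lower bound then follows by taking the maximum of the two constructions (and the last term $\big[\frac1n\log(1+K/\sqrt n)\big]^{1/2}$ comes out as the $\gamma\to 1$, $D=0$ endpoint of the dense construction, reflecting the saturation noted after \eqref{upper}). In both cases I would pick $f_0$ to be a fixed reference density (say the uniform density on $[0,1]$, which lies inside the simplex only up to the boundedness constraints, so more precisely a convex combination that is bounded away from $0$ and $\infty$), and perturb it in directions spanned by the $\bar f_k = f_k - f_0$. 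Because the Gram matrix $\bSigma$ has eigenvalues in $[\varkappa_*,\varkappa^*]$, the map $\bv\mapsto f_0 + \sum_k v_k\bar f_k$ is a quasi-isometry between $\ell_2$-distance of coefficient vectors and $L_2(\nu)$-distance of densities, and — since all densities involved are bounded below by a constant — $L_2$-distance is comparable to the square root of the Kullback--Leibler divergence. This lets me transfer the problem to a lower bound for estimating a coefficient vector under squared-error-type loss.

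For the sparse term, the plan is a standard Varshamov--Gilbert / Assouad-type argument: take $J$ of size $D$, and build a packing set of $\sim \exp(cD\log(K/D))$ vectors $\bv$ supported on $D$-element subsets of $[K]$, each coordinate of magnitude $\rho$, with pairwise $\ell_2$-separation $\gtrsim \rho\sqrt D$ and pairwise KL between the corresponding $n$-fold product measures $\lesssim n\rho^2 D$. Choosing $\rho^2 \asymp \frac{\log(K/D)}{n}$ makes Fano's inequality applicable and yields the rate $\frac{D\log(1+K/D)}{n}$ for the squared $L_2$-loss, hence for KL. One has to check that these perturbed functions are genuine densities in the simplex satisfying the boundedness condition — this forces $\rho$ to be at most a constant, which is compatible with the assumption $\log(1+eK)\le n$ — and that the infimum of $\KL(f^*\|f_\bpi)$ over the class $\calH_\calF(\gamma,D)$ is zero (or negligible) at each $f^*$ in the packing, which holds because each constructed $f^*$ is itself of the form $f_\bpi$ with a $D$-sparse $\bpi$ (up to the harmless $\gamma$ slack).

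For the dense term the construction is different and is the technically more delicate part. Here I want a packing of vectors $\bv$ with $\ell_1$-norm $\asymp\gamma$ but spread over a number $s$ of coordinates to be optimized, with each nonzero coordinate of size $\asymp\gamma/s$; the $\ell_2$-separation is then $\asymp\gamma/\sqrt s$ and the per-observation KL is $\asymp \gamma^2/s$, so Fano with a packing of size $\sim\exp(c\,s\log(K/s))$ needs $n\gamma^2/s \asymp s\log(K/s)$, i.e.\ $s\asymp \gamma\sqrt{n/\log(K/s)}$, which after solving gives separation squared $\asymp \frac\gamma s \asymp \big[\frac{\log(1+K/(\gamma\sqrt n))}{n}\big]^{1/2}\cdot\frac1\gamma$ — wait, more carefully, separation in $L_2$ squared is $\asymp \gamma^2/s \asymp \gamma\sqrt{\frac{\log(K/s)}{n}}$, and since KL is comparable to $L_2^2$ this is exactly $\big[\frac{\gamma^2}{n}\log(1+\frac{K}{\gamma\sqrt n})\big]^{1/2}$ after substituting the value of $s$. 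I need $s\le D$ for the constructed densities to lie in $\calH_\calF(\gamma,D)$ — when $s>D$ this is where the two terms trade off and the minimum in the statement kicks in — and I need $s\ge 1$ and the coordinatewise magnitude bounded, which is where the truncation at $\big[\frac1n\log(1+K/\sqrt n)\big]^{1/2}$ appears (taking $s$ as large as $\asymp\sqrt n$ with $\gamma\asymp 1$).

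The main obstacle I anticipate is the bookkeeping needed to make the dense construction work for \emph{every} dictionary satisfying only the spectral condition on $\bSigma$ (a ``strong'' rather than ``weak'' lower bound), while simultaneously (i) keeping every perturbed function a bona fide probability density bounded in $[m',M']$, (ii) ensuring the oracle term $\inf_{f_\bpi\in\calH_\calF(\gamma,D)}\KL(f^*\|f_\bpi)$ does not eat the lower bound — ideally by arranging that each $f^*$ in the packing is itself in $\calH_\calF(\gamma,D)$, so the oracle term vanishes — and (iii) getting the logarithmic factor $\log(1+\frac{K}{\gamma\sqrt n})$ rather than just $\log K$, which requires the spread parameter $s$ to be chosen at the precise value that balances the packing entropy against the information, and a Varshamov--Gilbert bound for constant-weight codes that is tight in the relevant range of $s$ relative to $K$. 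Once the coefficient-space lower bound is in place, converting it to $\KL(f^*\|\hat f)$ for \emph{arbitrary} estimators $\hat f$ (not just those valued in the simplex) is routine: replace $\hat f$ by its $L_2$-projection onto the affine span of $\{f_k\}$, which only decreases the $L_2$-error up to constants and lands in a region where KL $\asymp L_2^2$, then invoke the standard reduction of estimation to multiple testing (Tsybakov's Fano-type lemma).
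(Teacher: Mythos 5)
Your overall architecture---a two-part reduction to testing over Varshamov--Gilbert packings of weight vectors, with $\KL$ compared to the squared Mahalanobis distance $\|\bSigma^{\nicefrac12}(\bpi-\bpi')\|_2^2$ via the spectral bounds on $\bSigma$, and with every hypothesis placed inside the class so the oracle term vanishes---is exactly the paper's. But two of your steps would fail in specific regimes of $(\gamma,D)$. The first is the constraint $s\le D$ you impose on the dense construction: it is both unnecessary and fatal. Membership in $\calH_\calF(\gamma,D)$ only requires the existence of a set $J$ with $|J|\le D$ and $\|\bpi_{J^c}\|_1\le\gamma$; the $\gamma$-mass outside $J$ may be spread over arbitrarily many coordinates. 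The paper exploits this by working inside $\calH_\calF(\gamma,1)\subset\calH_\calF(\gamma,D)$ with vectors $\bpi=(1-\gamma,\gamma\bomega/s)$, $\|\bomega\|_1=s$, and $s\asymp\gamma\sqrt{n/\log(\cdot)}$ irrespective of $D$. If you insist on $s\le D$, then for small $D$ (say $D=1$ with $\gamma$ of constant order) your packing collapses and you cannot reach the rate $[\gamma^2\log(1+K/(\gamma\sqrt n))/n]^{\nicefrac12}$; the ``minimum in the statement'' does not rescue you there, since that minimum is with $[\log(1+K/\sqrt n)/n]^{\nicefrac12}$, which does not shrink with $D$.

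Second, the truncation term cannot be obtained by ``taking $\gamma\asymp1$'' in the dense construction, because $\gamma$ is part of the class definition and may be small. In the regime where $\gamma$ is small and $D\gtrsim(n/\log K)^{\nicefrac12}$, the claimed bound equals $[\log(1+K/\sqrt n)/n]^{\nicefrac12}$ and must come from the \emph{sparse} construction---but your choice $\rho^2\asymp\log(K/D)/n$ over $D$ active coordinates then gives total perturbation mass $D\rho\gg1$, violating the simplex constraint (and overshooting the matching upper bound). The fix, which is what the paper does in its Proposition on $\calH_\calF(0,D)$, is to cap the sparsity of the packing at the largest $d$ satisfying $d\le D$ and $d^2\log(1+eK/d)\lesssim n$; when the cap binds, $d\asymp(n/\log)^{\nicefrac12}$ and the sparse construction itself delivers $d\log(1+eK/d)/n\asymp[\log(1+K/\sqrt n)/n]^{\nicefrac12}$. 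With these two corrections your plan coincides with the paper's proof; the remaining differences (working with $\KL$ directly in Tsybakov's testing lemma versus your $L_2$-projection detour) are cosmetic.
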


This is the first result providing lower bounds on the minimax risk of aggregation over
nearly-$D$-sparse aggregates. To the best of our knowledge, even in the Gaussian sequence
model, such a result has not been established to date. It has the advantage of
unifying the results on convex and $D$-sparse aggregation, as well as extending them
to a more general class. Let us also stress that the condition $\log(1+eK)\le n$
is natural and unavoidable, since it ensures that the right hand side of \eqref{upper}
is smaller than the trivial bound $\log V$.

%%%%%%%%%%%%%%%%%%%%% ================================================== %%%%%%%%%%%%%%%%%%%%%
\subsection{Weight vector estimation}
\label{ssec:weight}
%%%%%%%%%%%%%%%%%%%%% ================================================== %%%%%%%%%%%%%%%%%%%%%

The risk bounds carried out in the previous section for the problem of
density estimation in the Kullback-Leibler loss imply risk bounds for
the problem of weight vector estimation. Indeed, under the boundedness
assumption \eqref{densConst}, the Kullback-Leibler divergence between
two mixture densities can be shown to be equivalent to the squared
Mahalanobis distance between the weight vectors of these mixtures with
respect to the Gram matrix.  In order to go from the Mahalanobis distance
to the Euclidean one, we make use of the restricted eigenvalue
\begin{equation}
\kappa^{\rm RE}_{\bSigma}(s,c)=\inf\big\{\| \bSigma^{\nicefrac12}\bv\|_2^{2}:\,
\exists\,J\subset [K] \text{ s.t. } |J|\le s,\
\|\bv_{J^{c}}\|_{1}\le c\|\bv_{J}\|_{1}\ \text{and}\ \|\bv_{J}\|_{2}=1\big\}.
\end{equation}
This strategy leads to the next result.

\begin{proposition}\label{prop:1}
Let $\cal F$ be a set of $K\ge 4$ densities satisfying condition \eqref{densConst}.
Denote by $f_{\hat\bpi}$ the mixture density corresponding to the maximum likelihood
estimator $\hat\bpi$ over $\Pi_n$ defined in \eqref{Phi}. Let $\bpi^*$ the weight-vector
of the best mixture density: $\bpi^*\in\text{\rm arg}\min_{\bpi} \KL(f^*||f_{\bpi})$,
and let $J^*$ be the support of $\bpi^*$. There are constants $c_{10}\le M^2(64V^3+8)$
and $c_{11}\le 4M^2(8V^3+1)$ such that, for any
$\delta \in (0,\nicefrac12)$, the following  inequalities hold
\begin{align}
 \|\hat\bpi - \bpi^*\|_1
			&\leq  \frac{c_{10}|J^*|}{\bar\kappa_{\bSigma}(J^*,1)}\,\Big(\frac{\log(K/\delta)}{n}\Big)^{\nicefrac12},
					\label{elOne}\\
\|\hat\bpi - \bpi^*\|_2
			&\leq  \frac{c_{11}}{\kappa^{\rm RE}_{\bSigma}(|J^*|,1)}\,\Big(\frac{2|J^*|\log(K/\delta)}{n}\Big)^{\nicefrac12},
					\label{euclOne}\\		
\|\hat\bpi - \bpi^*\|_2^2
			&\leq  \frac{c_{11}}{\kappa^{\rm RE}_{\bSigma}(|J^*|,1)}\,\Big(\frac{2\log(K/\delta)}{n}\Big)^{\nicefrac12}
			\label{euclTwo}
\end{align}
with probability at least $1-2\delta$.
\end{proposition}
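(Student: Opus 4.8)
The plan is to read off \eqref{elOne}--\eqref{euclTwo} from the Kullback-Leibler oracle bounds of \Cref{maintheo2} by combining two facts. First, under the boundedness condition \eqref{densConst} the excess Kullback-Leibler risk $\KL(f^*|| f_{\hat\bpi})-\KL(f^*|| f_{\bpi^*})$ is bounded below by a fixed multiple of the squared Mahalanobis distance $\|\bSigma^{\nicefrac12}(\hat\bpi-\bpi^*)\|_2^2$; second, the difference of two weight vectors of the simplex automatically satisfies the cone constraint attached to the support of either of them, which is exactly what brings $\bar\kappa_{\bSigma}(J^*,1)$ and $\kappa^{\rm RE}_{\bSigma}(|J^*|,1)$ into play with $c=1$. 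No fresh large-deviation estimate is needed; the probabilistic content, and hence the confidence level $1-2\delta$, is inherited verbatim from \Cref{maintheo2}.

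For the first fact, set $L(\bpi)=-\int_{\calX}f^*\log f_{\bpi}\,d\nu$, so that $\KL(f^*|| f_{\bpi})=L(\bpi)+\int_{\calX}f^*\log f^*\,d\nu$ whenever the right-hand side is finite (if $\KL(f^*|| f_{\bpi^*})=+\infty$ the oracle bounds are vacuous). The map $L$ is convex on $\BB_+^K$, with $\bv^\top\nabla^2 L(\bpi)\,\bv=\int f^*\big(\sum_j v_jf_j\big)^2/f_{\bpi}^2\,d\nu$. For $\bv$ of zero coordinate-sum one may replace $\sum_j v_jf_j$ by $\sum_j v_j\bar f_j$ (since $\sum_j v_jf_j=\sum_j v_j\bar f_j$ when $\sum_j v_j=0$), and since $f_{\bpi}\le M$ on $\calX$ for every $\bpi\in\BB_+^K$ this yields $\bv^\top\nabla^2 L(\bpi)\,\bv\ge M^{-2}\bv^\top\bSigma\bv$. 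Because $\hat\bpi-\bpi^*$ has zero coordinate-sum and the segment $[\bpi^*,\hat\bpi]$ stays in $\BB_+^K$, a second-order Taylor expansion of $L$ at $\bpi^*$ together with the first-order optimality inequality $\langle\nabla L(\bpi^*),\hat\bpi-\bpi^*\rangle\ge0$ (valid as $\bpi^*$ minimises $L$ over $\BB_+^K\ni\hat\bpi$) gives $\KL(f^*|| f_{\hat\bpi})-\KL(f^*|| f_{\bpi^*})\ge\tfrac1{2M^2}\|\bSigma^{\nicefrac12}(\hat\bpi-\bpi^*)\|_2^2$. Inserting \eqref{boundDevFour} with $J=J^*$, $\bpi=\bpi^*$, and separately \eqref{boundDevThree} with $J=\varnothing$, we obtain on a single event of probability at least $1-2\delta$
\[
\big\|\bSigma^{\nicefrac12}(\hat\bpi-\bpi^*)\big\|_2^2\ \le\ 2M^2\min\!\Big\{\tfrac{c_6|J^*|\log(K/\delta)}{n\,\bar\kappa_{\bSigma}(J^*,1)},\ c_4\big(\tfrac{\log(K/\delta)}{n}\big)^{\nicefrac12}\Big\}.
\]

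For the cone constraint, note that $\bpi^*_{(J^*)^c}=0$ makes $(\hat\bpi-\bpi^*)_{(J^*)^c}=\hat\bpi_{(J^*)^c}$ nonnegative, whence, using $\sum_j\hat\pi_j=\sum_j\pi^*_j=1$,
\[
\|(\hat\bpi-\bpi^*)_{(J^*)^c}\|_1=\sum_{j\notin J^*}\hat\pi_j=-\sum_{j\in J^*}(\hat\pi_j-\pi^*_j)\le\|(\hat\bpi-\bpi^*)_{J^*}\|_1,
\]
so $\hat\bpi-\bpi^*$ belongs to the cone $\{\bv\colon\|\bv_{(J^*)^c}\|_1\le\|\bv_{J^*}\|_1\}$ (the borderline case of equality is absorbed by using $c<1$ and letting $c\uparrow1$). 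Inequality \eqref{elOne} is then immediate: by definition of $\bar\kappa_{\bSigma}(J^*,1)$ one has $\|(\hat\bpi-\bpi^*)_{J^*}\|_1^2\le|J^*|\,\|\bSigma^{\nicefrac12}(\hat\bpi-\bpi^*)\|_2^2/\bar\kappa_{\bSigma}(J^*,1)$, and the cone bound gives $\|\hat\bpi-\bpi^*\|_1\le2\|(\hat\bpi-\bpi^*)_{J^*}\|_1$; substituting the first term of the displayed bound and tidying the numerical constants produces \eqref{elOne}.

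For \eqref{euclOne} and \eqref{euclTwo} I would run the Bickel-Ritov-Tsybakov truncation to upgrade the cone condition into the estimate $\|\hat\bpi-\bpi^*\|_2^2\lesssim\|\bSigma^{\nicefrac12}(\hat\bpi-\bpi^*)\|_2^2/\kappa^{\rm RE}_{\bSigma}(|J^*|,1)$: enlarging $J^*$ by the indices of the $|J^*|$ largest off-support coordinates of $\hat\bpi-\bpi^*$, the residual tail has $\ell_2$-norm at most $\|(\hat\bpi-\bpi^*)_{(J^*)^c}\|_1/\sqrt{|J^*|}\le\|(\hat\bpi-\bpi^*)_{J^*}\|_2$, so the enlarged support still carries a cone condition with $c=1$ and the restricted eigenvalue applies. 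Using in addition $\bar\kappa_{\bSigma}(J^*,1)\ge\kappa^{\rm RE}_{\bSigma}(|J^*|,1)$, feeding the fast-rate term of the displayed Mahalanobis bound through this estimate yields \eqref{euclOne}, and feeding the slow-rate term yields \eqref{euclTwo}. The step I expect to be the main obstacle is the lower bound of the second paragraph: one must extract the sharp numerical constant multiplying $\|\bSigma^{\nicefrac12}(\hat\bpi-\bpi^*)\|_2^2$, which forces careful verification that the Hessian lower bound holds uniformly along $[\bpi^*,\hat\bpi]\subset\BB_+^K$ and only on the zero-sum hyperplane, where the $f_j$ may be recentred to $\bar f_j$. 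Everything else --- the cone manipulations, the $\bar\kappa_{\bSigma}$--$\kappa^{\rm RE}_{\bSigma}$ comparison, the degenerate cases --- is routine.
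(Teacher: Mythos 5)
Your proposal follows essentially the same two-step strategy as the paper: (i) the strong-convexity lower bound $\KL(f^*||f_{\hat\bpi})-\KL(f^*||f_{\bpi^*})\ge\tfrac1{2M^2}\|\bSigma^{\nicefrac12}(\hat\bpi-\bpi^*)\|_2^2$, obtained from the convexity of $\bpi\mapsto\KL(f^*||f_{\bpi})$ and the first-order optimality of $\bpi^*$, and (ii) the cone inequality $\|(\hat\bpi-\bpi^*)_{(J^*)^c}\|_1\le\|(\hat\bpi-\bpi^*)_{J^*}\|_1$ combined with the compatibility and restricted-eigenvalue constants. The only routing difference is that you inject the finished oracle inequalities of \Cref{maintheo2}, whereas the paper reuses the intermediate bound \eqref{midIneqResThree} to get $\|\bSigma^{\nicefrac12}\bv\|_2^2\le 2M^2\bar\zeta_n\|\bv\|_1$ with $\bv=\hat\bpi-\bpi^*$ and then closes a self-bounding inequality in $\|\bv\|_1$; the two are equivalent up to small constant factors, and your version is, if anything, tidier for \eqref{elOne} and \eqref{euclOne}.

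Two points to fix. First, your truncation for the Euclidean bounds, as written, enlarges $J^*$ by its $|J^*|$ largest off-support coordinates, hence works with a set of cardinality $2|J^*|$ and delivers $\kappa^{\rm RE}_{\bSigma}(2|J^*|,1)$ rather than the $\kappa^{\rm RE}_{\bSigma}(|J^*|,1)$ appearing in \eqref{euclOne}--\eqref{euclTwo}. The paper sidesteps this by letting $\hat J$ be the $|J^*|$ indices of the largest entries of $|\bv|$ over all of $[K]$: then $\|\bv\|_1\le 2\|\bv_{J^*}\|_1\le 2\|\bv_{\hat J}\|_1$, so the cone condition holds for $\hat J$ itself with $|\hat J|=|J^*|$, and $\|\bv\|_2^2\le 2\|\bv_{\hat J}\|_2^2$ follows from $\|\bv_{\hat J^c}\|_2^2\le\|\bv_{\hat J^c}\|_\infty\|\bv_{\hat J^c}\|_1\le\|\bv_{\hat J}\|_1^2/|J^*|$. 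Second, for \eqref{euclTwo} your slow-rate branch (via \eqref{boundDevThree} with $J=\varnothing$) yields a constant of order $4M^2c_4\le 16M^2(8V^3+1)$, which exceeds the stated $\sqrt2\,c_{11}\le 4\sqrt2\,M^2(8V^3+1)$; the paper instead bounds $\|\bv\|_1\le 2$ directly inside the linear-in-$\|\bv\|_1$ inequality \eqref{midIneqResFr}. This is only a constant-factor discrepancy (the paper's own derivation of \eqref{euclTwo} is itself loose by a factor $\sqrt2$ against the stated $c_{11}$), but the routing through the $J=\varnothing$ oracle bound does not reproduce the advertised constant. Everything else, including the probability level $1-2\delta$ inherited from the event on which $\bar\zeta_n$ is controlled, is correct.
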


In simple words, this result tells us that the wight estimator $\hat\bpi$  attains the
minimax rate of estimation $|J^* |(\frac{\log(K)}{n})^{\nicefrac12}$ over the intersection of the
$\ell_1$ and $\ell_0$ balls, when the error is measured by the $\ell_1$-norm, provided
that the compatibility factor of the dictionary $\calF$ is bounded away from zero.
The optimality of this rate---up to logarithmic factors---follows from the fact that the
error of estimation of each nonzero coefficients of $\bpi^*$ is at least $cn^{-\nicefrac12}$ (for
some $c>0$), leading to a sum of the absolute values of the errors at least of the order
$|J^*|n^{-\nicefrac12}$. The logarithmic inflation of the rate is the price to pay for not knowing
the support $J^*$. It is clear that this reasoning is valid only when the sparsity
$|J^*|$ is of smaller order than $n^{\nicefrac12}$. Indeed, in the case $|J^*|\ge c n^{\nicefrac12}$,
the trivial bound $\|\hat\bpi-\bpi^*\|_1\le 2$ is tighter than the one in \eqref{elOne}.

Concerning the risk measured by the Euclidean norm, we underline that there are two regimes
characterized by the order between upper bounds in \eqref{euclOne} and \eqref{euclTwo}.
Roughly speaking, when the signal is highly sparse in the sense that $|J^*|$ is
smaller than $(n/\log K)^{\nicefrac12}$, then the smallest bound is given by \eqref{euclOne} and
is of the order $\frac{|J^*|\log(K)}{n}$. This rate is can be compared to the
rate $\frac{|J^*|\log(K/|J^*|)}{n}$, known to be optimal in the Gaussian sequence model.
In the second regime corresponding to mild sparsity, $|J^*| > (n/\log K)^{\nicefrac12}$,
the smallest bound is the one in \eqref{euclTwo}. The latter is of order
$(\frac{\log(K)}{n})^{\nicefrac12}$, which is known to be optimal in the Gaussian sequence
model. For various results providing lower bounds in regression framework we refer
the interested reader to \citep{Rask11,RT11,Wang14}.

%%%%%%%%%%%%%%%%%%%%% ================================================== %%%%%%%%%%%%%%%%%%%%%
\subsection{Extensions to the case of vanishing components}
\label{ssec:vanish}
%%%%%%%%%%%%%%%%%%%%% ================================================== %%%%%%%%%%%%%%%%%%%%%

In the previous sections we have deliberately avoided any discussion of the role of the
parameter $\mu$, present in the search space $\Pi_n(\mu)$ of the problem \eqref{MLE}-\eqref{Pi}.
In fact, when all the dictionary elements are separated from zero by  a constant $m$, a
condition assumed throughout previous sections, choosing any value of $\mu\le m$ is equivalent
to choosing $\mu=0$. Therefore, the choice of this parameter does not impact the quality of
estimation. However, this parameter might have strong influence in practice both on statistical
and computational complexity of the maximum likelihood estimator. A first step in understanding
the influence of $\mu$ on the statistical complexity is made in the next paragraphs.

Let us consider the case where the condition $\min_x \min_j f_j(x)\ge m>0$ fails,
but the upper-boundedness condition $\max_x\max_j f_j(x)\le M$ holds true. In such a
situation, we replace the definition $V= M/m$ by $V=M/\mu$. We also define the set
$\Pi^*(\mu) = \big\{\bpi\in\BB^K_+ : P^*\big(f_{\bpi}(\bX)\ge \mu\big)=1\big\}$.
In order to keep mathematical formulae simple, we will only state the equivalent of
\eqref{boundDevTwo} in the case of $m=0$. All the other results of the previous
section can be extended in a similar way.

\begin{proposition}\label{prop:2}
Let $\cal F$ be a set of $K\ge 2$ densities satisfying the boundedness condition
$\sup_{\bx\in\calX} f_j(\bx)\le M$. Denote by $f_{\hat\bpi}$ the mixture density
corresponding to the maximum likelihood estimator $\hat\bpi$ over $\Pi_n(\mu)$
defined in \eqref{Phi}. There is a constant $\bar c\le 128 M^2V^4$ such that,
for any $\delta \in (0,\nicefrac12)$,
\begin{align}
 \KL(f^*|| f_{\hat\bpi}) &\leq \inf_{J\subset [K]}
    \inf_{\substack{\bpi \in \Pi^*(\mu)\\ \bpi_{J^c}=0}}\bigg\{ \KL(f^*|| f_{\bpi})
    + \frac{\bar c|J|\log(K/\delta)}{n\bar\kappa_{\hat\bSigma_n}(J,1)}\bigg\} +
    \int_{\calX}(\log\mu-\log f_{\hat\bpi})_+f^*d\nu
    \label{boundDevFive}
\end{align}
on an event of probability at least $1-\delta$. Furthermore, if $\inf_{\bx\in\calX} f^*(\bx)
\ge \mu$, then, on the same event, we have
\begin{align}
 \|f^*-f_{\hat\bpi}\|_{L^2(P^*)}^2
    &\leq 2M^2\inf_{J\subset [K]}\inf_{\substack{\bpi \in \Pi^*(\mu)\\ \bpi_{J^c}=0}}
        \bigg\{ \KL(f^*|| f_{\bpi}) + \frac{\bar c|J|\log(K/\delta)}{
        n\bar\kappa_{\hat\bSigma_n}(J,1)}\bigg\}.
    \label{boundDevSix}
\end{align}
\end{proposition}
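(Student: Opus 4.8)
The plan is to repeat the proof of the oracle inequality \eqref{boundDevTwo} of \Cref{maintheo1}, letting the sample-level floor $\mu$ from \eqref{Pi} play the role that the global lower bound $m$ played there, and then to pay for the one genuine discrepancy this creates: the population Kullback--Leibler divergence ``sees'' $f_{\hat\bpi}$ everywhere on $\calX$, whereas the empirical criterion only sees it on the sample, where $f_{\hat\bpi}\ge\mu$ by construction.

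First I would introduce the truncated mixture $g_{\bpi}:=\mu\vee f_{\bpi}$. Since $\hat\bpi\in\Pi_n(\mu)$ and every competitor $\bpi\in\Pi^*(\mu)$ satisfies $f_{\bpi}(\bX_i)\ge\mu$ for all $i\in[n]$ with probability one (while $f_{\bpi}(\bX_i)\le M$ always), on the sample we have $g_{\bpi}(\bX_i)=f_{\bpi}(\bX_i)$ for all $\bpi\in\{\hat\bpi\}\cup\Pi^*(\mu)$; in particular any such competitor is feasible in \eqref{Phi}. Hence the empirical criterion $L_n$, its minimizer $\hat\bpi$, and all the \emph{empirical} curvature quantities used in the proof of \eqref{boundDevTwo} --- the compatibility factor $\bar\kappa_{\hat\bSigma_n}(J,1)$, the bounds $|\ell'(u)|=1/u\le 1/\mu$ and $1/M^2\le\ell''(u)=1/u^2\le 1/\mu^2$ on the range $[\mu,M]$ of the $\bZ_i^\top\bpi$, and so on --- are exactly the ones appearing in that proof with $m$ replaced by $\mu$. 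On the population side the only change is that one should read $-\int f^*\log g_{\bpi}\,d\nu$ in place of $-\int f^*\log f_{\bpi}\,d\nu=\Ex[\ell(\bZ_1^\top\bpi)]$; the two agree $P^*$-almost surely for a competitor $\bpi\in\Pi^*(\mu)$ (then $f_{\bpi}\ge\mu$ $P^*$-a.s., so $g_{\bpi}=f_{\bpi}$), whereas for $\hat\bpi$ they differ by precisely $\int_{\calX}(\log\mu-\log f_{\hat\bpi})_+f^*\,d\nu$.

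With these substitutions the argument of \eqref{boundDevTwo} goes through verbatim on the event it produces (of probability at least $1-\delta$): the optimality of $\hat\bpi$ on the convex set $\Pi_n(\mu)$ together with the strong convexity of $\ell$ on $[\mu,M]$ gives $L_n(\bpi)-L_n(\hat\bpi)\ge\frac1{2M^2}\|\hat\bSigma_n^{\nicefrac12}(\bpi-\hat\bpi)\|_2^2$; the centered empirical process is now taken over the \emph{uniformly bounded} class $\{-\log g_{\bpi}:\bpi\in\Pi_n(\mu)\}$ (all functions lie in $[-\log M,-\log\mu]$), so its linearization at $\bpi$ is $\langle r,\hat\bpi-\bpi\rangle$ with $\|r\|_\infty$ of order $(M/\mu)\sqrt{\log(K/\delta)/n}=V\sqrt{\log(K/\delta)/n}$; the simplex constraint forces $\|(\hat\bpi-\bpi)_{J^c}\|_1\le\|(\hat\bpi-\bpi)_J\|_1$ whenever $\bpi_{J^c}=0$, so invoking $\bar\kappa_{\hat\bSigma_n}(J,1)$ and absorbing $|\langle r,\hat\bpi-\bpi\rangle|$ into $\frac1{2M^2}\|\hat\bSigma_n^{\nicefrac12}(\hat\bpi-\bpi)\|_2^2$ by Young's inequality leaves the remainder $\bar c|J|\log(K/\delta)/(n\bar\kappa_{\hat\bSigma_n}(J,1))$. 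What one obtains this way is \eqref{boundDevFive} but with $\KL(f^*||f_{\hat\bpi})$ replaced by its truncated version $\int f^*\log(f^*/g_{\hat\bpi})\,d\nu$; adding $\int_{\calX}(\log\mu-\log f_{\hat\bpi})_+f^*\,d\nu$ to both sides restores $\KL(f^*||f_{\hat\bpi})$ on the left and gives exactly \eqref{boundDevFive}. That the power of $V$ in $\bar c$ comes out as $V^4$ rather than the $V^6$ of $c_3$ is because the pointwise lower bound $f_k\ge m$, which in \Cref{maintheo1} enters a couple of coordinatewise estimates, is no longer invoked --- only $f_{\bpi}(\bX_i)\ge\mu$ is used, and this comes for free from \eqref{Pi}.

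For the second claim I would start from the elementary pointwise inequality $a\log(a/b)-a+b\ge(a-b)^2/(2\max(a,b))$, valid for all $a,b>0$; integrating it and using $\int f^*\,d\nu=\int f_{\hat\bpi}\,d\nu=1$ together with $f^*\le M$ and $f_{\hat\bpi}\le M$ yields the reverse-Pinsker bound $\|f^*-f_{\hat\bpi}\|_{L^2(P^*)}^2\le 2M^2\,\KL(f^*||f_{\hat\bpi})$. Plugging \eqref{boundDevFive} into this gives \eqref{boundDevSix} \emph{up to the correction term}, and the role of the additional hypothesis $\inf_\calX f^*\ge\mu$ is precisely to dispose of it: on $\{f_{\hat\bpi}<\mu\}$ one has $\mu\le f^*\le M$ and $f_{\hat\bpi}<\mu$, so by the mean value theorem $(f^*-f_{\hat\bpi})^2\le M^2\log(f^*/f_{\hat\bpi})$, whence the contribution of this set to $\|f^*-f_{\hat\bpi}\|_{L^2(P^*)}^2$ is at most $M^2\int_{\{f_{\hat\bpi}<\mu\}}f^*\log(f^*/f_{\hat\bpi})\,d\nu$; the point is then that, under the floor assumption, this integral is governed by the \emph{truncated} Kullback--Leibler bound already furnished in the proof of \eqref{boundDevFive} (on $\{f_{\hat\bpi}<\mu\}$ we have $g_{\hat\bpi}=\mu$ and $\log(f^*/\mu)\ge0$), rather than by the a priori larger correction term. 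Making this comparison quantitatively clean --- reconciling $\int_{\{f_{\hat\bpi}<\mu\}}f^*\log(f^*/\mu)\,d\nu$ and the correction $\int_{\{f_{\hat\bpi}<\mu\}}f^*\log(\mu/f_{\hat\bpi})\,d\nu$ against the truncated-KL control --- is the step I expect to be the main obstacle; all the remaining steps are a faithful transcription of the proof of \eqref{boundDevTwo} with $m$ replaced by $\mu$.
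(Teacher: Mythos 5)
Your treatment of the first inequality is essentially the paper's: the paper also replaces $\ell$ below the level $\mu$ by a surrogate that coincides with $-\log(u/\mu)$ for $u\ge\mu$, observes that the empirical criterion is unchanged on the sample, and generates the correction term $\int_{\calX}(\log\mu-\log f_{\hat\bpi})_+f^*d\nu$ exactly from the pointwise gap between the surrogate and $\log\mu-\log f_{\hat\bpi}$ for the estimator (while the two agree for competitors in $\Pi^*(\mu)$). The one substantive difference is that you use the hard truncation $g_{\bpi}=\mu\vee f_{\bpi}$, whereas the paper uses a $C^1$ surrogate $\bar\ell$ obtained by extending $-\log(u/\mu)$ quadratically below $\mu$, with $M^{-2}\le\bar\ell''\le\mu^{-2}$ on $(0,M)$. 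This is not cosmetic: your class $\{-\log g_{\bpi}\}$ has a derivative $u\mapsto-\mathbf{1}\{u>\mu\}/u$ that jumps at $u=\mu$, so the contraction step that turns the supremum of the gradient process into a Rademacher average over the dictionary (the analogue of \Cref{boundEmpProcess}) cannot be applied as stated; the paper's smoothing makes $\bar\ell'$ Lipschitz with constant $\mu^{-2}$, which is what produces the $8V^2(\nicefrac{\log(K/\delta)}{n})^{\nicefrac12}$ bound and hence $\bar c\le 2M^2(8V^2)^2=128M^2V^4$. (Your intermediate claim that the gradient supremum is of order $V\sqrt{\log(K/\delta)/n}$ is inconsistent with the $V^4$ you correctly announce for $\bar c$; the right order is $V^2$.)

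The genuine gap is the second inequality, and it is exactly the step you flag as the ``main obstacle'': your reverse-Pinsker route $\|f^*-f_{\hat\bpi}\|^2_{L^2(P^*)}\le 2M^2\KL(f^*||f_{\hat\bpi})$ followed by \eqref{boundDevFive} necessarily reintroduces $2M^2\int_{\calX}(\log\mu-\log f_{\hat\bpi})_+f^*d\nu$, and your attempted absorption via $(f^*-f_{\hat\bpi})^2\le M^2\log(f^*/f_{\hat\bpi})$ on $\{f_{\hat\bpi}<\mu\}$ only reproduces that same uncontrolled integral, since $f^*\log(f^*/f_{\hat\bpi})=f^*\log(f^*/\mu)+f^*\log(\mu/f_{\hat\bpi})$ there. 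The paper avoids the correction term altogether by never passing through $\KL(f^*||f_{\hat\bpi})$: it returns to the intermediate bound on $\int\bar\ell(f_{\hat\bpi})f^*d\nu$ and lower-bounds the integrand by the second-order Taylor expansion $\bar\ell(f_{\hat\bpi})\ge\bar\ell(f^*)+\bar\ell'(f^*)(f_{\hat\bpi}-f^*)+\frac1{2M^2}(f_{\hat\bpi}-f^*)^2$, valid pointwise because $\bar\ell''\ge M^{-2}$ on all of $(0,M)$. The hypothesis $f^*\ge\mu$ enters only to guarantee $\bar\ell'(f^*)=-1/f^*$, so that the linear term integrates to $-\int(f_{\hat\bpi}-f^*)d\nu=0$ and the quadratic term is precisely $\frac1{2M^2}\|f^*-f_{\hat\bpi}\|^2_{L^2(P^*)}$. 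This is the second place where your hard truncation is fatal rather than merely inconvenient: $-\log(\mu\vee u)$ is flat (zero curvature, and in fact non-convex at the kink) below $\mu$, so no such global strong-convexity expansion is available for it, and the second claim cannot be recovered from your surrogate.
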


The last term present in the first upper bound,  $\int_{\calX}(\log\mu-\log f_{\hat\bpi})_+f^*d\nu$
is the price we pay for considering densities that are not lower bounded by a given constant.
A simple, non-random upper bound on this term is $\int_{\calX}\max_{k\in [K]}(\log\mu-\log f_k)_+f^*d\nu$.
Providing a tight upper bound on this kind or remainder terms is an important problem which lies
beyond the scope of the present work.

% section discussion_of_the_results (end)

%%%%%%%%%%%%%%%%%%%%% ================================================== %%%%%%%%%%%%%%%%%%%%%
%\section{Experimental evaluation} % (fold)
%\label{sec:experiments}
%%%%%%%%%%%%%%%%%%%%% ================================================== %%%%%%%%%%%%%%%%%%%%%

%%%%%%%%%%%%%%%%%%%%% ================================================== %%%%%%%%%%%%%%%%%%%%%
\section{Conclusion} % (fold)
\label{sec:conclusion}
%%%%%%%%%%%%%%%%%%%%% ================================================== %%%%%%%%%%%%%%%%%%%%%

In this paper, we have established exact oracle inequalities for the maximum likelihood
estimator of a mixture density. This oracle inequality clearly highlights the interplay
of three sources of error: misspecification of the model of mixture, departure from
$D$-sparsity and stochastic error of estimating $D$ nonzero coefficients.  We have also
proved a lower bound that show that the remainder terms of our upper bounds are optimal,
up to logarithmic terms. This lower bound is valid not only for the maximum likelihood
estimator, but for any estimator of the density function. As a consequence, the maximum
likelihood estimator has a nearly optimal excess risk in the minimax sense.

In all the results of the present paper, we have assumed that the components of the mixture
model are deterministic. From a practical point of view, it might be reasonable to choose
these components in a data driven way, using, for instance, a hold-out sample. This question,
as well as the problem of tuning the parameter $\mu$ , constitute interesting and challenging
avenues for  future research.

% section conclusion (end)

%%%%%%%%%%%%%%%%%%%%% ================================================== %%%%%%%%%%%%%%%%%%%%%
\section{Proofs of results stated in previous sections} % (fold)
\label{sec:proofs}
%%%%%%%%%%%%%%%%%%%%% ================================================== %%%%%%%%%%%%%%%%%%%%%

This section collects the proofs of the theorems and claims stated in previous sections.

%%%%%%%%%%%%%%%%%%%%% ================================================== %%%%%%%%%%%%%%%%%%%%%
\subsection{Proof of \Cref{maintheo1}}
%%%%%%%%%%%%%%%%%%%%% ================================================== %%%%%%%%%%%%%%%%%%%%%

The main technical ingredients of the proof are a strong convexity argument and a control of
the maximum of an empirical process. The corresponding results are stated in \Cref{convexlemma}
and \Cref{boundEmpProcess}, respectively, deferred to  \Cref{ssec:auxiliary}. We denote by
$\bar\bfZ$ the $n\times K$ matrix $[\bar\bZ_1,\ldots,\bar\bZ_K]$.

Since $\hat\bpi$ is a minimizer of $L_n(\cdot)$, see \eqref{MLE} and \eqref{Phi}, we know that
$L_n(\hat\bpi)\le L_n(\bpi)$ for every $\pi$. However, this inequality can be made sharper using the
(local) strong convexity of the function $\ell(u) = -\log(u)$. Indeed, \Cref{convexlemma} below
shows that
\begin{equation}\label{ineqOne}
    \frac{1}{n}\sum_{i=1}^n \ell(f_{\hat\bpi}(\bX_i)) \leq
    \frac{1}{n}\sum_{i=1}^n \ell(f_{\bpi}(\bX_i)) - \frac{1}{2M^2n}\|\bar\bfZ(\hat\bpi-\bpi)\|^2_2.
\end{equation}
On the other hand, if we set $\varphi(\pi,\bx) = \int (\log f_{\bpi})f^*d\nu -\log f_{\bpi}(\bx)$,
we have $\Ex_{f^*}[\varphi(\bpi,\bX_i)]=0$ and
\begin{equation}\label{ineqTwo}
\ell(f_{\bpi}(\bX_i)) = \KL(f^*||f_{\bpi}) -\int_{\mathcal X} f^*\,\log f^*d\nu+\varphi(\pi,\bX_i).
\end{equation}
Combining inequalities \eqref{ineqOne} and \eqref{ineqTwo}, we get
\begin{equation}
\label{tempOraclIneq}
\KL(f^*|| f_{\hat\bpi}) \leq \KL(f^*|| f_{\bpi})-\frac{1}{2M^2n}\|\bar\bfZ(\hat\bpi-\bpi)\|_2^2
+ \frac1n\sum_{i=1}^n \big(\varphi(\bpi,\bX_i)-\varphi(\hat\bpi,\bX_i)\big).
\end{equation}
The next step of the proof consists in establishing a suitable upper bound on the noise term
$\Phi_n(\bpi)-\Phi_n(\hat\bpi)$
where
\begin{equation}
\Phi_n(\bpi) = \frac1n\sum_{i=1}^n \varphi(\bpi,\bX_i).
\end{equation}
According to the mean value theorem, setting  $\zeta_n:= \sup_{\bar\bpi\in\bPi_n}\big\| \nabla \Phi_n(\bar\bpi)\big\|_\infty$,
for every vector $\bpi\in \bPi_n$,
it holds that
\begin{equation}
  |\Phi_n(\hat\bpi)-\Phi_n(\bpi)|\leq \sup_{\bar\bpi\in\bPi_n}\big\| \nabla \Phi_n(\bar\bpi)\big\|_\infty \|\hat\bpi-\bpi \|_1 =
  \zeta_n \|\hat\bpi-\bpi \|_1.
\end{equation}
This inequality, combined  with \eqref{tempOraclIneq}, yields
\begin{equation}
\label{midIneqRes}
\KL(f^*|| f_{\hat\bpi}) \leq \KL(f^*|| f_{\bpi})-\frac{1}{2M^2n}\|\bar\bfZ(\hat\bpi-\bpi)\|_2^2
+ \zeta_n\|\hat\bpi - \bpi \|_1.
\end{equation}
Using the Gram matrix $\hat\bSigma_n = \nicefrac1n\bar\bfZ^\top\bar\bfZ$, the quantity
$\|\bar\bfZ(\hat\bpi - \bpi)\|_2$ can be rewritten as
\begin{equation}
\label{gramRewrite}
  \|\bar\bfZ(\hat\bpi - \bpi)\|_2^2 = n\|\hat\bSigma_n^{\nicefrac12} (\hat\bpi - \bpi)\|_2^2.
\end{equation}

We proceed with applying the following result \citep[Lemma 2]{BDGP}.

\begin{lemma}[\cite{BDGP}, Lemma 2]
For any pair of vectors $\bpi,\bpi'\in\RR^K$, for any pair of scalars $\mu>0$ and $\gamma>1$, for any $K\times K$
symmetric matrix $\bfA$ and for any set $J \subset [p]$, the following inequality is true
\begin{equation}
   2\mu\gamma^{-1}(\| \bpi-\hat\bpi\|_1 + \gamma \|\bpi\|_1 - \gamma \|\hat\bpi\|_1 )-\|\bfA(\bpi-\hat\bpi)\|_2^2 \leq
  4\mu \| \bpi_{J^c}\|_1 + \frac{(\gamma+1)^2\mu^2|J|}{\gamma^2\kappa_{\bfA^2}(J,c_{\gamma})},
\end{equation}
where $c_{\gamma}=(\gamma+1)/(\gamma-1)$.
\end{lemma}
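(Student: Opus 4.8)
The plan is to set $\bv=\bpi-\hat\bpi$ (reading the $\bpi'$ of the statement as $\hat\bpi$) and reduce the claim to an inequality in $\bv_J$ and $\bv_{J^c}$, then split into the two cases governed by the compatibility cone. First I would split the $\ell_1$-norms over $J$ and $J^c$ and apply the (reverse) triangle inequality coordinate block by block: $\|\bpi_J\|_1-\|\hat\bpi_J\|_1\le\|\bv_J\|_1$, and, using $\|\hat\bpi_{J^c}\|_1\ge\|\bv_{J^c}\|_1-\|\bpi_{J^c}\|_1$, also $\|\bpi_{J^c}\|_1-\|\hat\bpi_{J^c}\|_1\le 2\|\bpi_{J^c}\|_1-\|\bv_{J^c}\|_1$. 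Combined with $\|\bv\|_1=\|\bv_J\|_1+\|\bv_{J^c}\|_1$ this gives
\[\|\bv\|_1+\gamma\|\bpi\|_1-\gamma\|\hat\bpi\|_1\le(\gamma+1)\|\bv_J\|_1-(\gamma-1)\|\bv_{J^c}\|_1+2\gamma\|\bpi_{J^c}\|_1.\]
Getting the sign of the $-\|\bv_{J^c}\|_1$ term correct here is the one delicate piece of bookkeeping; everything afterwards is a clean optimization.

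Next I would distinguish two cases with $c_\gamma=(\gamma+1)/(\gamma-1)$. If $\|\bv_{J^c}\|_1\ge c_\gamma\|\bv_J\|_1$, then $(\gamma+1)\|\bv_J\|_1-(\gamma-1)\|\bv_{J^c}\|_1\le 0$, so the left-hand side of the claim is at most $2\mu\gamma^{-1}\cdot 2\gamma\|\bpi_{J^c}\|_1-\|\bfA\bv\|_2^2\le 4\mu\|\bpi_{J^c}\|_1$, which is even stronger than required. If instead $\|\bv_{J^c}\|_1<c_\gamma\|\bv_J\|_1$, I would use the identity $(\gamma-1)c_\gamma=\gamma+1$ to write $(\gamma+1)\|\bv_J\|_1-(\gamma-1)\|\bv_{J^c}\|_1=(\gamma-1)\,t$ with $t:=c_\gamma\|\bv_J\|_1-\|\bv_{J^c}\|_1>0$, and invoke the definition of the compatibility constant of the matrix $\bfA^2$: since $\|(\bfA^2)^{\nicefrac12}\bv\|_2^2=\bv^\top\bfA^2\bv=\|\bfA\bv\|_2^2$, the definition yields $t^2\le c_\gamma^2|J|\,\|\bfA\bv\|_2^2/\kappa_{\bfA^2}(J,c_\gamma)$, i.e. $\|\bfA\bv\|_2^2\ge\kappa_{\bfA^2}(J,c_\gamma)\,t^2/(c_\gamma^2|J|)$.

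Finally I would bound the left-hand side of the claim by $4\mu\|\bpi_{J^c}\|_1+\big(2\mu(\gamma-1)/\gamma\big)t-\|\bfA\bv\|_2^2$, insert the lower bound on $\|\bfA\bv\|_2^2$, and complete the square: the last two terms are at most $\max_{t\ge 0}\{at-bt^2\}=a^2/(4b)$ with $a=2\mu(\gamma-1)/\gamma$ and $b=\kappa_{\bfA^2}(J,c_\gamma)/(c_\gamma^2|J|)$, which equals $\mu^2(\gamma-1)^2c_\gamma^2|J|/(\gamma^2\kappa_{\bfA^2}(J,c_\gamma))=\mu^2(\gamma+1)^2|J|/(\gamma^2\kappa_{\bfA^2}(J,c_\gamma))$ since $(\gamma-1)^2c_\gamma^2=(\gamma+1)^2$. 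This is exactly the asserted remainder term, and the two cases together prove the lemma. I expect the main obstacle to be purely notational: correctly matching the compatibility constant, which is that of $\bfA^2$, with the quantity $\|\bfA\bv\|_2^2$ appearing in the inequality, together with the sign bookkeeping in the first display; beyond that the argument is a two-line case split followed by completing the square.
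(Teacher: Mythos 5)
Your argument is correct: the block-wise triangle inequalities, the case split on whether $\|\bv_{J^c}\|_1 < c_\gamma\|\bv_J\|_1$, the use of $\|(\bfA^2)^{\nicefrac12}\bv\|_2^2=\|\bfA\bv\|_2^2$ in the compatibility constant, and the final completion of the square with $(\gamma-1)^2c_\gamma^2=(\gamma+1)^2$ all check out. The paper itself only cites this lemma from \cite{BDGP} without reproducing a proof, and your derivation is essentially the standard argument given in that reference, so there is nothing further to compare.
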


Choosing $\bfA = \hat\bSigma_n^{\nicefrac12}/(\sqrt{2}\,M)$, $\mu=\zeta_n$ and $\gamma=2$ (thus $c_\gamma = 3$)
we get the inequality
\begin{equation}
  \zeta_n\| \bpi-\hat\bpi\|_1-\|\bfA(\bpi-\hat\bpi)\|_2^2 \leq
  4\zeta_n\| \bpi_{J^c}\|_1 + \frac{9\zeta_n^2|J|}{4\kappa_{\bfA^2}(J,3)},\qquad \forall J \in \{1, \dots, p\}.
\end{equation}
One can check that $\kappa_{\bfA^2}(J,3) = \kappa_{\hat\bSigma_n}(J,3)/(2M^2)$.
Combining the last inequality with \eqref{midIneqRes}, we arrive at
\begin{equation}
\label{oracleIneqTh}
  \KL(f^*|| f_{\hat\bpi}) \leq \KL(f^*|| f_{\bpi}) + 4\zeta_n\| \bpi_{J^c}\|_1 + \frac{9M^2\zeta_n^2|J|}{2\kappa_{\hat\bSigma_n}(J,3)}.
\end{equation}
Since the last inequality holds for every $\bpi$, we can insert an $\inf_{\bpi}$ in the right hand side.
Furthermore, in view of \Cref{boundEmpProcess} below, with probability larger than $1-\delta$, $\zeta_n$ is bounded from above
by $8V^3(\frac{\log(K/\delta)}{n})^{\nicefrac12}$. This completes the proof of \eqref{boundDeviation}.

To prove \eqref{boundDevTwo}, we follow the same steps as above up to inequality \eqref{midIneqRes}.
Then, we remark that for every $\bpi$ in the simplex satisfying $\bpi_{J^c}=0$, it holds
\begin{equation}\label{inqCone}
\|(\hat\bpi-\bpi)_{J^c}\|_1= \|\hat\bpi_{J^c}\|_1 = 1-\|\hat\bpi_{J}\|_1 =
\|\bpi_{J}\|_1-\|\hat\bpi_{J}\|_1\le \|(\hat\bpi-\bpi)_{J}\|_1.
\end{equation}
Therefore, $\|\hat\bSigma_n^{\nicefrac12}(\hat\bpi-\bpi)\|_2^2\ge $ we have with probability at least $1-\delta$
\begin{align}
  \zeta_n\| \hat\bpi - \bpi\|_1 -\frac{1}{2M^2n}\|\bfZ(\hat\bpi-\bpi)\|_2^2
        &\leq 2\zeta_n\| (\hat\bpi-\bpi)_J\|_1-\frac{1}{2M^2}\|\hat\bSigma_n^{\nicefrac12}(\hat\bpi-\bpi)\|_2^2\\
        &\leq 2\zeta_n\| (\bpi-\hat\bpi)_J\|_1 - \frac{\bar\kappa_{\hat\bSigma_n}(J,1)\| (\bpi-\hat\bpi)_J\|_1^2}{2M^2|J|}\\
        &\leq \frac{2\zeta_n^2M^2|J|}{\bar\kappa_{\hat\bSigma_n}(J,1)}.\label{ineqa}
\end{align}
Replacing the right hand term in \eqref{midIneqRes} and taking the infimum, we get
the claim of the corollary. Since, in view of \Cref{boundEmpProcess} below, with probability
larger than $1-\delta$, $\zeta_n$ is bounded from above by
$8V^3(\frac{\log(K/\delta)}{n})^{\nicefrac12}$, we get the claim of \eqref{boundDevTwo}.

%%%%%%%%%%%%%%%%%%%%% ================================================== %%%%%%%%%%%%%%%%%%%%%
\subsection{Proof of \Cref{maintheo2}}
\label{ssec:proof:smallK}
%%%%%%%%%%%%%%%%%%%%% ================================================== %%%%%%%%%%%%%%%%%%%%%
Let us denote $\bv = \hat\bpi - \bpi$. According to \eqref{midIneqRes} and \eqref{gramRewrite},
we have
\begin{align}
\KL(f^*|| f_{\hat\bpi})
        &\leq \KL(f^*|| f_{\bpi})+ \zeta_n\|\hat\bpi - \bpi \|_1-
                \frac{1}{2M^2}\|\hat\bSigma_n^{\nicefrac12}(\hat\bpi-\bpi)\|_2^2\\
        &\leq \KL(f^*|| f_{\bpi})+ \zeta_n\|\bv\|_1-\frac{1}{2M^2}\|\bSigma^{\nicefrac12}\bv\|_2^2+
                \frac{1}{2M^2}\,\bv^\top(\bSigma-\hat\bSigma_n)\bv.
        \label{midIneqResTwo}
\end{align}
As $\bv$ is the difference of two vectors lying on the simplex, we have $\|\bv\|_1\le 2$. Let
$\|\bSigma-\hat\bSigma_n\|_\infty = \max_{j,j'}|(\bSigma-\hat\bSigma_n)_{j,j'}|$ stand for the
largest (in absolute values) element of the matrix $\bSigma-\hat\bSigma_n$. We have
\begin{equation}
\bv^\top(\bSigma-\hat\bSigma_n)\bv \le \|\bSigma-\hat\bSigma_n\|_{\infty} \|\bv\|_1^2
\le 2\|\bSigma-\hat\bSigma_n\|_{\infty} \|\bv\|_1.
\end{equation}
Setting $\bar\zeta_n = \zeta_n + M^{-2}\|\bSigma-\hat\bSigma_n\|_{\infty}$, we get
\begin{align}
\KL(f^*|| f_{\hat\bpi})
        &\leq \KL(f^*|| f_{\bpi})+ \bar\zeta_n\|\hat\bpi - \bpi \|_1-
                \frac{1}{2M^2}\|\bSigma^{\nicefrac12}(\hat\bpi-\bpi)\|_2^2.
                \label{midIneqResThree}
\end{align}
Following the same steps as those used for obtaining \eqref{oracleIneqTh}, we arrive at
\begin{equation}
\label{oracleIneqThTwo}
  \KL(f^*|| f_{\hat\bpi}) \leq \KL(f^*|| f_{\bpi}) + 4\bar\zeta_n\| \bpi_{J^c}\|_1 +
  \frac{9\bar\zeta_n^2M^2|J|}{2\kappa_{\bSigma}(J,3)}.
\end{equation}
The last step consists in evaluating the quantiles of the random variable $\bar\zeta_n$.
To this end, one checks that the Hoeffding inequality combined with the union bound yields
\begin{equation}
\Pb\Big\{\|\bSigma-\hat\bSigma_n\|_{\infty} >t\Big\} \le K(K-1) \exp({-2nt^2/M^4}),\qquad\forall t>0.
\end{equation}
In other terms, for every $\delta\in(0,1)$, we have
\begin{equation}
\Pb\Big\{\|\bSigma-\hat\bSigma_n\|_{\infty} \le M^2\Big(\frac{\log(K^2/\delta)}{2n}\Big)^{\nicefrac12} \Big\}
\ge 1-\delta.\label{SigmaInfty}
\end{equation}
Note that for $\delta\le 1$, we have $\log(K^2/\delta)\le 2\log(K/\delta)$. Combining with
\Cref{boundEmpProcess}, this implies that $\bar\zeta_n \le (8V^3+1)\big(\frac{\log(K/\delta)}{n}
\big)^{\nicefrac12}$ with probability larger than $1-2\delta$. This completes the proof of \eqref{boundDevThree}.
The proof of \eqref{boundDevFour} is omitted since it repeats the same arguments as those
used for proving \eqref{boundDevTwo}.

%%%%%%%%%%%%%%%%%%%%% ================================================== %%%%%%%%%%%%%%%%%%%%%
\subsection{Proof of \Cref{th:expectation}}
%%%%%%%%%%%%%%%%%%%%% ================================================== %%%%%%%%%%%%%%%%%%%%%

According to \eqref{oracleIneqThTwo}, for any $\bpi\in \Pi$ and
any $J\subset\{1,\dots,K\}$, we have
\begin{equation}\label{eqSixThreeOne}
  \Ex[\KL(f^*|| f_{\hat\bpi})] \leq \KL(f^*|| f_{\bpi}) + 4\| \bpi_{J^c}\|_1\Ex[\bar\zeta_n] +
  \frac{9 M^2|J|}{2\kappa_{\bSigma}(J,3)}\,\Ex[\bar\zeta_n^2].
\end{equation}
Recall now that $\bar\zeta_n = \zeta_n+M^{-2}\|\hat\bSigma_n-\bSigma\|_{\infty}$ and, according to
\Cref{boundEmpProcess}, we have
\begin{equation}\label{eqSixThreeTw}
  \Ex[\zeta_n] \leq 4V^3\Big(\frac{2\log(2K^2)}{n}\Big)^{\nicefrac12} \qquad\text{and}\qquad
  \var[\zeta_n] \leq \frac{V^2}{2n}.
\end{equation}
Using \Cref{Hoeffding1}, one easily checks that
\begin{equation}\label{eqSixThreeTh}
  \Ex[\|\hat\bSigma_n-\bSigma\|_{\infty}] \leq M^2\Big(\frac{\log(2K^2)}{2n}\Big)^{\nicefrac12}.
\end{equation}
This implies that
\begin{equation}\label{eqSixThreeF}
  \Ex[\bar\zeta_n] \leq (8V^3+1\big)\Big(\frac{\log(2K^2)}{2n}\Big)^{\nicefrac12}.
\end{equation}
Similarly, in view of the Efron-Stein inequality, we have
$\var[\|\hat\bSigma_n-\bSigma\|_{\infty}]\le \frac{M^4}{2n}$. This implies that
\begin{align}
  \Ex[\bar\zeta_n^2]
    &\leq (\Ex[\bar\zeta_n])^2 + \big\{(\var[\zeta_n])^{\nicefrac12}+
			M^{-2}(\var[\|\hat\bSigma_n-\bSigma\|_{\infty}])^{\nicefrac12}\big\}^2 \\
    &\le (8V^3+1\big)^2\frac{\log(2K^2)}{2n} + \frac{(V+1)^2}{2n}\\
    &\le 1.615(8V^3+1\big)^2\frac{\log K}{n}.\label{eqSixThreeFv}
\end{align}
Combining \eqref{eqSixThreeF}, \eqref{eqSixThreeFv} and \eqref{eqSixThreeOne}, we get the desired result.

%%%%%%%%%%%%%%%%%%%%% ================================================== %%%%%%%%%%%%%%%%%%%%%
\subsection{Proof of \Cref{prop:1}}
%%%%%%%%%%%%%%%%%%%%% ================================================== %%%%%%%%%%%%%%%%%%%%%

Using the strong convexity of the function $u\mapsto \log u$ over the interval $[m,M]$ and
the fact that $\bpi^*$ minimizes the convex function $\bpi\mapsto\KL(f^*||f_{\bpi})$, we get
\begin{equation}
\KL(f^*||f_{\hat\bpi}) \ge \KL(f^*||f_{\bpi^*})+\frac1{2M^2} \|\hat\bSigma_n^{\nicefrac12}(\hat\bpi-
\bpi^*)\|_2^2.
\end{equation}
Combining with \eqref{midIneqResThree}, in which we replace $\bpi$ by $\bpi^*$, we get
\begin{equation}\label{midIneqResFr}
\|\bSigma^{\nicefrac12}(\hat\bpi-\bpi^*)\|_2^2 \le 2 M^2\bar\zeta_n\|\hat\bpi-\bpi^*\|_1.
\end{equation}
Let us set $\bv = \hat\bpi-\bpi^*$. If $\bv=0$, then the claims are trivial. In the rest of this
proof, we assume $\|\bv\|_1>0$. In view of \eqref{inqCone}, we have
$\|\bv\|_1\le 2\|\bv_{J^*}\|_1$. Therefore, using the definition of the compatibility
factor, we get
\begin{equation}\label{vOne}
\|\bv\|_1^2\le 4\|\bv_{J^*}\|_1^2\le \frac{4|J^*|\,\|\bSigma^{\nicefrac12}\bv\|_2^2}
{\bar\kappa(J^*,1)} \le \frac{8|J^*|\,M^2\bar\zeta_n\|\bv\|_1}{\bar\kappa(J^*,1)}.
\end{equation}
We have already checked that $\bar\zeta_n \le (8V^3+1)\big(\frac{\log(K/\delta)}{n}
\big)^{\nicefrac12}$ with probability larger than $1-2\delta$. Dividing both sides of inequality
\eqref{vOne} by $\|\bv\|_1$ and using the aforementioned upper bound on $\bar\zeta_n$,
we get the desired bound on $\|\bv\|_1=\|\hat\bpi-\bpi^*\|_1$.

In order to bound the error $\bv=\hat\bpi-\bpi^*$ in the Euclidean norm, we denote
by $\hat J $ the set of $D = |J^*|$ indices corresponding to $D$ largest entries of the
vector $(|v_1|,\ldots,|v_K|)$. Since $\|\bv\|_1\le 2\|\bv_{J^*}\|_1$, we clearly have
$\|\bv\|_1\le 2\|\bv_{\hat J}\|_1$. Therefore,
\begin{align}
\|\bv\|_2^2
		&= \|\bv_{\hat J}\|_2^2+\|\bv_{\hat J^c}\|_2^2\\
		&\le \|\bv_{\hat J}\|_2^2+\|\bv_{\hat J^c}\|_\infty\|\bv_{\hat J^c}\|_1\\
		&\le \|\bv_{\hat J}\|_2^2+\frac{\|\bv_{\hat J}\|_1}{D}\|\bv_{\hat J^c}\|_1\\
		&\le \|\bv_{\hat J}\|_2^2+\frac{1}{D}\|\bv_{\hat J}\|_1^2\le 2\|\bv_{\hat J}\|_2^2
		\label{eqTwoEight}.
\end{align}
Combining this inequality with the definition of the restricted eigenvalue and
inequality \eqref{midIneqResFr} above, we arrive at
\begin{align}
\|\bv_{\hat J}\|_2^2\le \frac{\|\bSigma^{\nicefrac12}\bv\|_2^2}{\kappa^{\rm RE}(D,1)}
		\le  \frac{2M^2\bar\zeta_n\|\bv\|_1}{\kappa^{\rm RE}(D,1)}
		\le  \frac{4M^2\bar\zeta_n(\|\bv_{\hat J}\|_1\wedge 1)}{\kappa^{\rm RE}(D,1)}
		\le  \frac{4M^2\,\bar\zeta_n(\sqrt{D}\|\bv_{\hat J}\|_2\wedge 1)}{\kappa^{\rm RE}(D,1)}.
\end{align}
Dividing both sides by $\|\bv_{\hat J}\|_2$, taking the square and using \eqref{eqTwoEight},
we get
\begin{align}
\|\bv\|_2\le \sqrt2\,\|\bv_{\hat J}\|_2
		\le  \frac{ 4 \sqrt2 M^2 |J^*|^{\nicefrac12}\,\bar\zeta_n}{\kappa^{\rm RE}(|J^*|,1)}\bigwedge
		\frac{2\sqrt2 M \bar\zeta_n^{\nicefrac12}}{\kappa^{\rm RE}(|J^*|,1)^{\nicefrac12}}.
\end{align}
This inequality, in conjunction with the upper bound on $\bar\zeta_n$ used above,
completes the proof of the second claim.

%%%%%%%%%%%%%%%%%%%%% ================================================== %%%%%%%%%%%%%%%%%%%%%
\subsection{Proof of \Cref{prop:2}}
\label{ssec:proof:prop2}
%%%%%%%%%%%%%%%%%%%%% ================================================== %%%%%%%%%%%%%%%%%%%%%
\begin{figure}
\begin{center}
\includegraphics[width=0.8\textwidth]{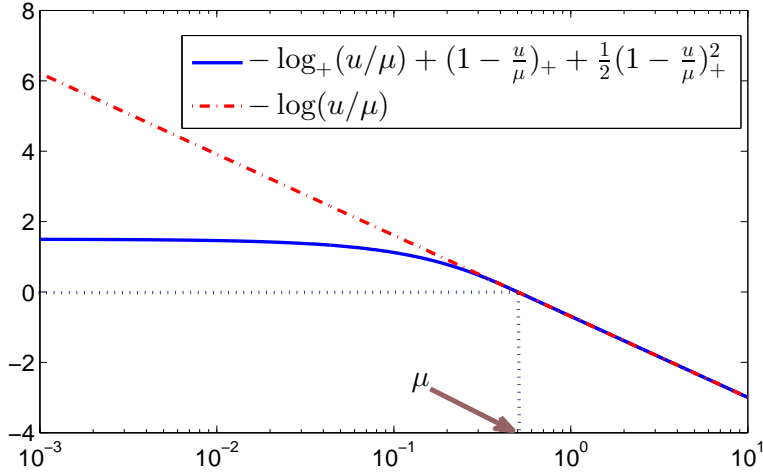}
\caption{The plot of the function $u\mapsto\bar\ell(u)$, used in the proof of
\Cref{prop:2}, superposed on the plot of the function $u\mapsto\ell(u)=-\log u$.
We see that the former is a strongly convex surrogate of the latter.}
\label{fig:ell}
\end{center}
\end{figure}
We repeat the proof of \Cref{maintheo1} with some small modifications. First of all, we replace the
function $\ell(u) = -\log(u)$ by the function
\begin{equation}\label{elBar}
\bar\ell(u) =
\begin{cases}
- \log(u/\mu), & \text{ if } u\ge \mu,\\
(1-\frac{u}{\mu})+\frac12(1-\frac{u}{\mu})^2, & \text{ if } u\in(0,\mu).
\end{cases}
\end{equation}
One easily checks that this function is twice continuously differentiable with a second
derivative satisfying $M^{-2}\le \bar\ell''(u)\le \mu^{-2}$ for every $u\in(0,M)$.
Furthermore, since $\bar\ell(u) = \ell(u/\mu)$ for every $u\ge \mu$, we have
$\bar L_n(\hat\bpi) = L_n(\hat\bpi)$, where we have used the notation $\bar L_n(\bpi)
= \frac1n\sum_{i=1}^n \bar\ell(f_{\bpi}(\bX_i))$. Therefore, similarly to \eqref{ineqOne},
we get
\begin{equation}\label{ineqOneb}
    \frac{1}{n}\sum_{i=1}^n \bar\ell(f_{\hat\bpi}(\bX_i)) \leq
    \frac{1}{n}\sum_{i=1}^n \bar\ell(f_{\bpi}(\bX_i)) -
    \frac{1}{2M^2n}\|\bar\bfZ(\hat\bpi-\bpi)\|^2_2,
\end{equation}
for every $\bpi\in\Pi^*(\mu)$. Let us define $\bar \varphi(\bpi,\bx) = \bar\ell(f_{\bpi}(\bx))-
\int \bar\ell(f_{\bpi})f^*d\nu$ and $\bar\Phi_n(\bpi) = \frac1n\sum_{i=1}^n
\bar\varphi(\bpi,\bX_i)$. We have
\begin{align}
\int \bar\ell(f_{\hat\bpi})\,f^*d\nu
    &\leq \int \bar\ell(f_{\bpi})\,f^*d\nu-\frac{1}{2M^2n}\|\bar\bfZ(\hat\bpi-\bpi)\|_2^2
        + \frac1n\sum_{i=1}^n \big(\varphi(\bpi,\bX_i)-\varphi(\hat\bpi,\bX_i)\big)\label{tempOraclIneqb}\\
    &\leq \int \bar\ell(f_{\bpi})\,f^*d\nu-\frac{1}{2M^2n}\|\bar\bfZ(\hat\bpi-\bpi)\|_2^2
        +\underbrace{\sup_{\bpi\in \Pi_n(0)}\|\nabla\bar\Phi_n(\bpi)\|_\infty}_{:=\xi_n} \|\hat\bpi-\bpi \|_1.
\end{align}
Notice that $\bpi\in\Pi^*(\mu)$ implies that $\bar\ell(f_{\bpi}) = \log\mu-\log f_{\bpi}$
and that $\bar\ell(f_{\hat\bpi}) \ge  \log \mu-\log f_{\hat\bpi} -(\log\mu-\log f_{\hat\bpi})_+ $.
Therefore, along the lines of the proof of \eqref{boundDevTwo} (see, namely, \eqref{ineqa}),
we get
\begin{align}
\label{tempOraclIneqC}
\KL(f^*||f_{\hat\bpi})
    &\leq \KL(f^*||f_{\bpi}) + \frac{2\xi_n^2M^2|J|}{\bar\kappa_{\hat\bSigma_n}(J,1)}
    +\int_{\calX}(\log\mu-\log f_{\hat\bpi})_+f^*d\nu.
\end{align}
We can repeat now the arguments of \Cref{boundEmpProcess} with some minor modifications. We first
rewrite $\xi_n$ as $\xi_n = \max_{l=1,\ldots,K} \xi_{l,n}$ with $\xi_{l,n} = \sup_{\bpi\in\Pi_n(0)}
|\partial_l \bar\Phi_n(\bpi)|$. One checks that the bounded difference inequality and the Efron-Stein
inequality can be applied with an additional factor 2, since for $F_l(\bfX) = \sup_{\bpi\in\Pi_n(0)}
|\partial_l \bar\Phi_n(\bpi)|$, we have
\begin{equation}
|F_l(\bfX)-F_l(\bfX')| \le \frac{2M}{n\mu} = \frac{2V}{n}.
\end{equation}
Therefore, for every $l\in[K]$, with probability larger than $1-(\delta/K)$, we have $\xi_{l,n}\le
\Ex[\xi_{l,n}]+ V(\frac{2\log(K/\delta)}{n})^{\nicefrac12}$ and $\var[\xi_n]\le (2V)^2/n$.
By the union bound, we obtain that with probability larger than $1-\delta$,
$\xi_{n}\le \max_l\Ex[\xi_{l,n}]+ V(\frac{2\log(K/\delta)}{n})^{\nicefrac12}$. Thus, to upper
bound $\Ex[\xi_{l,n}]$, we use the symmetrization argument:
\begin{align}
\Ex[\xi_{l,n}]
    & \le 2\Ex\bigg[\sup_{\bpi\in\Pi_n(0)} \bigg|\frac1n\sum_{i=1}^n \epsilon_i
            \bar\ell'(f_{\bpi}(\bX_i))f_l(\bX_i)\bigg|\bigg]\\
    &\le 2M\Ex\bigg[\sup_{\bpi\in\Pi_n(0)} \bigg|\frac1n\sum_{i=1}^n \epsilon_i
            \bar\ell'(f_{\bpi}(\bX_i))\bigg|\bigg]\qquad\text{\citep[Th.\ 11.5]{boucheron2013concentration}}\\
    &\le \frac{2M}{\mu}\Ex\bigg[\bigg|\frac1n\sum_{i=1}^n \epsilon_i\bigg|\bigg]+
        2M\Ex\bigg[\sup_{\bpi\in\Pi_n(0)} \bigg|\frac1n\sum_{i=1}^n \epsilon_i
            [\bar\ell'(f_{\bpi}(\bX_i))-\bar\ell'(0)]\bigg|\bigg].
\end{align}
Note that the function $\bar\ell'$, the derivative of $\bar\ell$ defined in \eqref{elBar},
is by construction Lipschitz with constant $1/\mu^2$. Therefore, in view of the contraction
principle,
\begin{align}
\Ex[\xi_{l,n}]
    &\le \frac{2M}{\mu}\Ex\bigg[\bigg(\frac1n\sum_{i=1}^n \epsilon_i\bigg)^2\bigg]^{\nicefrac12}+
        \frac{4M}{\mu^2}\Ex\bigg[\sup_{\bpi\in\Pi_n(0)} \frac1n\sum_{i=1}^n
        \epsilon_if_{\bpi}(\bX_i)\bigg]\\
    &\le \frac{2M}{\mu\sqrt{n}}+
        \frac{4M}{\mu^2}\Ex\bigg[\sup_{k\in[K]} \frac1n\sum_{i=1}^n
        \epsilon_i f_k(\bX_i)\bigg]\\
    &\le \frac{2M}{\mu\sqrt{n}}+
        \frac{8M^2}{\mu^2}\Big(\frac{\log K}{2n}\Big)^{\nicefrac12} \le \frac{2V^2(1+2\sqrt{2\log K})}{\sqrt{n}}.
\end{align}
As a consequence, we proved that with probability larger than $1-\delta$, we have
$\xi_n\le 8V^2(\frac{\log K}{n})^{\nicefrac12}$. This completes the proof of the first inequality.
In order to prove the second one, we simply change the way we have evaluated the term
$\int \bar\ell(f_{\hat\bpi})f^*$ in the left hand side of \eqref{tempOraclIneqb}. Since
$\bar\ell$ is strongly convex with a second order derivative bounded from below by $1/M^2$, we
have $\bar\ell(f_{\hat\bpi})\ge \bar\ell(f^*) + \bar\ell'(f^*)(f_{\hat\bpi}-f^*)+
\frac1{2M^2}(f_{\hat\bpi}-f^*)^2$. Since $f^*$ is always larger than $\mu$, the derivative
$\bar\ell'(f^*)$ equals $1/f^*$. Integrating over $\calX$, we get the second inequality of
the proposition.

%%%%%%%%%%%%%%%%%%%%% ================================================== %%%%%%%%%%%%%%%%%%%%%
\subsection{Auxiliary results}\label{ssec:auxiliary}
%%%%%%%%%%%%%%%%%%%%% ================================================== %%%%%%%%%%%%%%%%%%%%%

We start by a general convex result based on the strong convexity of the $-log$ function to
derive a bound on the estimated log-likelihood.
\begin{lemma}{}
\label{convexlemma}
Let us assume that $M =\max_{j\in[K]}\|f_j\|_\infty<\infty$. Then, for any $\bpi\in \BB^K_+$, it holds that
\begin{equation}
L_n(\hat\bpi) \le L_n(\bpi) -\frac1{2M^2n}\|\bar\bfZ(\hat\bpi-\bpi)\|_2^2.
\end{equation}
\end{lemma}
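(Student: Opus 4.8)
The plan is to exploit the (local) strong convexity of $\ell(u)=-\log u$ together with the first-order optimality of $\hat\bpi$ over its convex feasible set. The observation that makes the modulus $1/(2M^2)$ come out exactly is the following: for any weight vector $\btheta$ in the simplex, $f_{\btheta}(\bX_i)=\bZ_i^\top\btheta$ is a convex combination of the values $f_1(\bX_i),\dots,f_K(\bX_i)\in[0,M]$, hence lies in $(0,M]$ whenever it is nonzero; and on $(0,M]$ one has $\ell''(u)=1/u^2\ge 1/M^2$, so the restriction of $\ell$ to this interval is $M^{-2}$-strongly convex. Thus no lower bound on the densities is needed in this lemma.

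I would first dispose of the trivial case: if $L_n(\bpi)=+\infty$ (i.e.\ $f_{\bpi}(\bX_i)=0$ for some $i$) the asserted inequality holds automatically. Otherwise $f_{\bpi}(\bX_i)>0$ for every $i$, and since $\hat\bpi$ minimizes $L_n$ over $\Pi_n$ (which equals $\BB_+^K$, as $\mu=0$ in the results of this section) and $\bpi\in\BB_+^K$ is feasible, we get $L_n(\hat\bpi)\le L_n(\bpi)<\infty$, so $f_{\hat\bpi}(\bX_i)>0$ for every $i$ as well; in particular $L_n$ is differentiable at $\hat\bpi$. Now set $\bpi_t=\hat\bpi+t(\bpi-\hat\bpi)$ for $t\in[0,1]$ and $h_i(t)=\ell\big(f_{\bpi_t}(\bX_i)\big)$; since $f_{\bpi_t}(\bX_i)=(1-t)f_{\hat\bpi}(\bX_i)+tf_{\bpi}(\bX_i)\in(0,M]$ for every $t\in[0,1]$, we have $h_i''(t)=\big(\bZ_i^\top(\bpi-\hat\bpi)\big)^2\ell''\big(f_{\bpi_t}(\bX_i)\big)\ge M^{-2}\big(\bZ_i^\top(\bpi-\hat\bpi)\big)^2$, and Taylor's formula with integral remainder at $t=0$ yields $\ell\big(f_{\bpi}(\bX_i)\big)\ge \ell\big(f_{\hat\bpi}(\bX_i)\big)+h_i'(0)+\tfrac{1}{2M^2}\big(\bZ_i^\top(\bpi-\hat\bpi)\big)^2$. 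Averaging over $i\in[n]$ and recognizing $\tfrac1n\sum_i h_i'(0)=\langle\nabla L_n(\hat\bpi),\bpi-\hat\bpi\rangle$ gives
\[
L_n(\bpi)\ \ge\ L_n(\hat\bpi)+\langle\nabla L_n(\hat\bpi),\bpi-\hat\bpi\rangle+\frac{1}{2M^2n}\sum_{i=1}^n\big(\bZ_i^\top(\bpi-\hat\bpi)\big)^2 .
\]

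To finish, I invoke the first-order optimality condition: since $L_n$ is convex and differentiable at its minimizer $\hat\bpi$ over the convex set $\Pi_n$, and $\bpi\in\Pi_n$, the directional derivative along $\bpi-\hat\bpi$ is nonnegative, i.e.\ $\langle\nabla L_n(\hat\bpi),\bpi-\hat\bpi\rangle\ge 0$. Lastly, $\bpi$ and $\hat\bpi$ both belong to the simplex, so the coordinates of $\bpi-\hat\bpi$ sum to zero, and the $f_0$-contribution cancels: $\bar\bZ_i^\top(\bpi-\hat\bpi)=\bZ_i^\top(\bpi-\hat\bpi)-f_0(\bX_i)\sum_j(\pi_j-\hat\pi_j)=\bZ_i^\top(\bpi-\hat\bpi)$ for every $i$, hence $\sum_{i=1}^n\big(\bZ_i^\top(\bpi-\hat\bpi)\big)^2=\|\bar\bfZ(\hat\bpi-\bpi)\|_2^2$. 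Substituting these two facts into the displayed inequality and rearranging gives $L_n(\hat\bpi)\le L_n(\bpi)-\tfrac{1}{2M^2n}\|\bar\bfZ(\hat\bpi-\bpi)\|_2^2$. The only point requiring genuine care is verifying that $f_{\bpi_t}(\bX_i)$ stays strictly positive and bounded by $M$ along the entire segment — this is exactly what simultaneously legitimizes differentiability of $L_n$ at $\hat\bpi$ and the use of the strong-convexity modulus $M^{-2}$; everything else is routine bookkeeping.
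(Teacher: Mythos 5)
Your proof is correct and follows essentially the same route as the paper: both exploit the $M^{-2}$-strong convexity of $\ell(u)=-\log u$ on $(0,M]$ together with the first-order optimality of $\hat\bpi$ over the convex feasible set, and both conclude by noting that $\bfZ(\hat\bpi-\bpi)=\bar\bfZ(\hat\bpi-\bpi)$ because the coordinates of $\hat\bpi-\bpi$ sum to zero. The paper packages the strong convexity as convexity of the tilted function $\widetilde L_n(\bpi)=L_n(\bpi)-\frac1{2M^2n}\|\bfZ(\hat\bpi-\bpi)\|_2^2$ and invokes the subdifferential/KKT condition, whereas you do a direct Taylor expansion along the segment and use the variational inequality $\langle\nabla L_n(\hat\bpi),\bpi-\hat\bpi\rangle\ge 0$; your handling of the degenerate cases ($L_n(\bpi)=+\infty$, positivity along the segment) is if anything a bit more careful than the paper's.
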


\begin{proof}
Recall that $\hat\bpi$ minimizes the function $L_n$ defined in \eqref{Phi} over $\Pi_n$. Furthermore,
the function $u\mapsto \ell(u)$ is clearly strongly convex with a second order derivative bounded from below
by $1/M^2$ over the set $u\in(0,M]$. Therefore, for every $\hat u\in(0,M]$, the function $\widetilde{\ell}$ given by:
\begin{equation}
    \widetilde{\ell}(u)=\ell(u)-\frac{1}{2M^2}(\hat{u}-u)^2,\quad u\in(0,M],
\end{equation}
is convex.
This implies that the mapping
\begin{equation}
\bpi\mapsto \widetilde L_n(\bpi) = L_n(\bpi)-\frac1{2 M^2 n}\|\bfZ(\hat\bpi-\bpi)\|_2^2
\end{equation}
is convex over the set $\bpi\in \BB^K_+$. This yields\footnote{We denote by $\partial g$ the sub-differential of a convex function $g$.}
\begin{equation}
\widetilde L_n(\bpi) -\widetilde L_n(\hat\bpi) \ge \sup_{\bv\in \partial\, \widetilde L_n(\hat\bpi)}\bv^\top (\bpi-\hat\bpi) ,\qquad \forall\bpi\in\BB^K_+.
\end{equation}
Using the Karush-Kuhn-Tucker conditions and the fact that $\hat\bpi$ minimizes $L_n$, we
get $\mathbf 0_K\in \partial\,L_n(\hat\bpi) = \partial\,\widetilde L_n(\hat\bpi)$. This
readily gives $\widetilde L_n(\bpi) -\widetilde L_n(\hat\bpi)\ge 0$, for any $\bpi\in\BB^K_+$.
The last step is to remark that $\bfZ(\hat\bpi-\bpi) = \bar\bfZ(\hat\bpi-\bpi)$, since both
$\hat\bpi$ and $\bpi$ have entries summing to one.
\end{proof}

The core of our results lies in the following proposition which bound the deviations of the empirical process part.

\begin{prop}[Supremum of Empirical Process]
\label{boundEmpProcess}
For any $\bpi\in\BB^K_+$ and $\bx\in\calX$, define $\varphi(\pi,\bx) = \int (\log f_{\bpi})f^* -\log f_{\bpi}(\bx)$ and consider
$\Phi_n(\bpi)=\frac1n\sum_{i=1}^n \varphi(\bpi,\bX_i)$. If $K\ge 2$, then for any $\delta \in (0,1)$, with probability at
least $1-\delta$, we have
\begin{equation}
  \zeta_n=\sup_{\bpi\in\Pi_n}\big\| \nabla \Phi_n(\bpi)\big\|_\infty \leq 8V^3\Big(\frac{\log(K/\delta)}{n}\Big)^{\nicefrac12}.
\end{equation}
Furthermore, we have $\Ex[\zeta_n]\le 4V^3\big(\frac{2\log(2K^2)}{n}\big)^{\nicefrac12}$ and $\var[\zeta_n]\le V^2/(2n)$.
\end{prop}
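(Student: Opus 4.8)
The plan is to identify the gradient of $\Phi_n$ as a bounded, centered empirical process indexed simultaneously by $\bpi\in\BB^K_+$ and the coordinate $l\in[K]$, and then to bound its supremum by combining a bounded-differences concentration step with a symmetrization/contraction bound on its expectation. First I would compute, for $l\in[K]$, using $\partial_l\log f_{\bpi}=f_l/f_{\bpi}$,
\[
\partial_l\Phi_n(\bpi)=\frac1n\sum_{i=1}^n\Big(\int_\calX\frac{f_l\,f^*}{f_{\bpi}}\,d\nu-\frac{f_l(\bX_i)}{f_{\bpi}(\bX_i)}\Big),
\]
and set $g_{l,\bpi}(\bx)=f_l(\bx)/f_{\bpi}(\bx)$. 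Since $\bpi\in\BB^K_+$ and $m\le f_j\le M$, we have $m\le f_{\bpi}\le M$ on $\calX$, so $g_{l,\bpi}$ takes values in $[1/V,V]$ and each summand above is a centered variable with range at most $V-1/V\le V$. Thus $\zeta_n=\sup_{\bpi}\max_{l\in[K]}|\partial_l\Phi_n(\bpi)|$ is the supremum of a centered empirical process over the class $\{g_{l,\bpi}:l\in[K],\,\bpi\in\BB^K_+\}$ of $[1/V,V]$-valued functions (recall that with $\mu=0$ the set $\Pi_n$ is the whole simplex).

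For the concentration step, fix $l$ and put $\zeta_{l,n}=\sup_{\bpi}|\partial_l\Phi_n(\bpi)|$, so that $\zeta_n=\max_{l}\zeta_{l,n}$. Replacing one observation $\bX_i$ changes $\partial_l\Phi_n(\bpi)$ by at most $V/n$ uniformly in $\bpi$, hence changes $\zeta_{l,n}$ and $\zeta_n$ by at most $V/n$. The bounded-differences inequality gives $\zeta_{l,n}\le\Ex[\zeta_{l,n}]+V(\log(K/\delta)/(2n))^{\nicefrac12}$ with probability at least $1-\delta/K$, and a union bound over $l\in[K]$ yields
\[
\zeta_n\le\max_{l\in[K]}\Ex[\zeta_{l,n}]+V\Big(\frac{\log(K/\delta)}{2n}\Big)^{\nicefrac12}
\qquad\text{with probability at least }1-\delta.
\]
The variance bound follows at once from the Efron--Stein inequality: $\var[\zeta_n]\le\frac12\sum_{i=1}^n(V/n)^2=V^2/(2n)$.

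It then remains to control $\Ex[\zeta_{l,n}]$ uniformly in $l$. By symmetrization, $\Ex[\zeta_{l,n}]\le 2\,\Ex\sup_{\bpi}\big|\frac1n\sum_i\epsilon_i g_{l,\bpi}(\bX_i)\big|$ with Rademacher $\epsilon_i$. The key point is that, for fixed $\bx$, the dependence of $g_{l,\bpi}(\bx)$ on $\bpi$ enters only through the scalar $f_{\bpi}(\bx)=\bZ(\bx)^\top\bpi$, which is linear in $\bpi$, while $u\mapsto f_l(\bx)/u$ is Lipschitz with constant at most $M/m^2$ on $[m,M]$. Passing to a globally Lipschitz extension of this map and splitting off its constant part (which contributes only an $O(V/\sqrt n)$ term, absorbed using $K\ge2$), the contraction principle reduces the problem to $\Ex\sup_{\bpi\in\BB^K_+}\big|\frac1n\sum_i\epsilon_i f_{\bpi}(\bX_i)\big|$. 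Since $\bpi\mapsto\frac1n\sum_i\epsilon_i f_{\bpi}(\bX_i)$ is linear on the simplex, its supremum is attained at a vertex, so this equals $\Ex\max_{j\in[K]}\big|\frac1n\sum_i\epsilon_i f_j(\bX_i)\big|$, which the sub-Gaussian maximal inequality bounds by $M(2\log(2K)/n)^{\nicefrac12}$. Collecting the factors gives a bound of order $V^2(\log(2K^2)/n)^{\nicefrac12}$ for $\Ex[\zeta_{l,n}]$ uniformly in $l$ (in particular the announced $\Ex[\zeta_n]\le 4V^3(2\log(2K^2)/n)^{\nicefrac12}$, using $V\ge1$); inserting this into the displayed concentration bound and simplifying constants with $\delta<1$ and $K\ge2$ yields $\zeta_n\le 8V^3(\log(K/\delta)/n)^{\nicefrac12}$.

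The step I expect to be the main obstacle is the contraction/comparison step: the natural parametrization $u=f_{\bpi}(\bx)$ ranges over $[m,M]$, which does \emph{not} contain the origin, so the textbook contraction inequality (which requires $\phi(0)=0$) cannot be applied verbatim; one must work with a global Lipschitz extension of $u\mapsto f_l(\bx)/u$, peel off the resulting constant term, and verify it is of lower order. A secondary point is checking that the linearized Rademacher process over the simplex is maximized at a vertex, which is exactly what keeps the complexity at the optimal $\sqrt{\log K}$ level instead of something growing with the ambient dimension $K$.
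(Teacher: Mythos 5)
Your proposal is correct and follows essentially the same route as the paper: bounded differences/Efron--Stein for the deviation and variance bounds, then symmetrization, contraction, linearity of $\bpi\mapsto f_{\bpi}(\bX_i)$ over the simplex (so the supremum sits at a vertex), and a sub-Gaussian maximal inequality over the pairs $(k,l)$. The only notable difference is at the step you flag as delicate: instead of Lipschitz-extending and recentering $u\mapsto f_l(\bx)/u$ and peeling off a constant term, the paper parametrizes the class by $x=f_{\bpi}(\bX_i)/f_l(\bX_i)-1$ and applies the contraction principle to $\psi(x)=(1+x)^{-1}-1$, which is $V^2$-Lipschitz on the relevant range and satisfies $\psi(0)=0$ natively.
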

\begin{proof}
To ease notation, let us denote $g_{\bpi,l}(x) = \frac{f_l(x)}{f_{\bpi}(x)}-
\Ex\big[ \frac{f_l(\bX)}{f_{\bpi}(\bX)}\big]$ and
\begin{equation}
  F(\mathbf X) = \sup_{\bpi\in\Pi_n}\big\| \nabla \Phi_n(\bpi)\big\|_\infty
  =\sup_{(\bpi,l)\in\Pi_n\times[K]}\Big|  \frac{1}{n}\sum_{i=1}^n g_{\bpi,l}(\bX_i) \Big|,
\end{equation}
where $\bfX=(\bX_1,\dots,\bX_n)$.  To derive a bound on $F$, we will use the McDiarmid concentration
inequality that requires the bounded difference condition to hold for $F$. For some $i_0\in[n]$, let
$\bfX'=(\bX_1,\dots,\bX'_{i_0},\dots,\bX_n)$  be a new sample obtained from $\bfX$ by modifying the
$i_0$-th element $\bX_i$ and by leaving all the others unchanged. Then, we have
\begin{align}
F(\mathbf X)-F(\mathbf X')
        &= \sup_{(\bpi,l)\in\Pi_n\times[K]} \bigg|  \frac{1}{n}\sum_{i=1}^n g_{\bpi,l}\big(\bX_i\big) \bigg|-\sup_{(\bpi,l)\in\Pi\times[K]} \bigg|
            \frac{1}{n}\sum_{i=1}^n g_{\bpi,l}\big(\bX'_i\big) \bigg|\\
        &\leq \sup_{(\bpi,l)\in\Pi_n\times[K]}  \bigg|\frac{1}{n}\sum_{i=1}^n g_{\bpi,l}\big(\bX_i\big) - \frac{1}{n}\sum_{i=1}^n g_{\bpi,l}\big(\bX'_i\big)  \bigg|\\
        &=\sup_{(\bpi,l)\in\Pi_n\times[K]} \bigg| \frac{1}{n}\Big(g_{\bpi,l}\big(\bX_{i_0}\big) - g_{\bpi,l}\big(\bX'_{i_0}\big)\Big) \bigg|
        \leq \frac{V}{n},
\end{align}
where the last inequality is a direct consequence of assumption \eqref{densConst}. Therefore, using the McDiarmid concentration inequality recalled
in \Cref{McDiarmid} below, we check that the inequality
\begin{equation}
\label{concIneqZ}
  F(\mathbf X) \leq \Ex(F(\mathbf X))+V\sqrt{\frac{\log(1/\delta)}{2n}}
\end{equation}
holds with probability at least $1-\delta$. Furthermore, in view of the Efron-Stein
inequality, we have
\begin{equation}
\label{EfrStein}
  \var[\zeta_n] = \var[F(\mathbf X)] \leq \frac{V^2}{2n}.
\end{equation}

Let us denote $\calG:=\{(f_l/f_{\bpi})-1, (\bpi,l)\in\Pi_n\times [K]\}$ and $\mathfrak{R}_{n,q}(\calG)$ the Rademacher
complexity of $\calG$ given by
\begin{equation}
\label{radCompl}
  \mathfrak{R}_{n}(\calG) = \Ex_{\epsilon}\bigg[ \sup_{(\bpi,l)\in\Pi_n\times[K]}\bigg| \frac{1}{n}\sum_{i=1}^n
  \epsilon_i\Big(\frac{f_l(\bX_i)}{f_{\bpi}(\bX_i)}-1\Big)\bigg| \bigg],
\end{equation}
with $\epsilon_1,\dots,\epsilon_n$ independent and identically distributed Rademacher random variables independent
of $\bX_1,\dots,\bX_n$. Using the symmetrization inequality (see, for instance, Theorem~2.1 in~\cite{KoltBook2011})
we have
\begin{equation}
\label{rademacher}
  \Ex[F(\mathbf X)] = \Ex[\zeta_n] \leq 2\Ex[\mathfrak{R}_{n}(\calG)].
\end{equation}
\begin{lemma}
\label{boundRademComplex}
The Rademacher complexity defined in \eqref{radCompl} satisfies
\begin{equation}
\mathfrak{R}_n(\calG) \leq  4V^3\sqrt{\frac{\log K}{n}}.
\end{equation}
\end{lemma}
\begin{proof}
The proof relies on the contraction principle of \cite{LedouxTal:91} that we recall in
\Cref{sec:appendix_a} for the convenience. We apply this principle to the random variables
$X_{i,(\bpi,l)}=f_{\bpi}(\bX_i)/f_l(\bX_i)-1$ and to the function $\psi(x) = (1+x)^{-1}-1$.
Clearly $\psi$ is Lipschitz on
$[\frac{1}{V}-1,V-1]$ with the Lipschitz constant equal to $V^2$ and $\psi(0)=0$. Therefore
\begin{align}
   \mathfrak{R}_{n}(\calG)
			& \le  \Ex_{\epsilon}\bigg[ \sup_{(\bpi,l)} \frac{1}{n}\sum_{i=1}^n
                \epsilon_i\psi(\bX_{i,(\bpi,l)})\bigg]
                +\Ex_{\epsilon}\bigg[ \sup_{(\bpi,l)}\frac{1}{n}\sum_{i=1}^n
                \epsilon_i(-\psi)(\bX_{i,(\bpi,l)})\bigg]\\
			&\leq 2V^2\Ex_{\epsilon}\bigg[ \sup_{(\bpi,l)\in\Pi_n\times[K]}\frac{1}{n}
                \sum_{i=1}^n \epsilon_i \bX_{i,(\bpi,l)} \bigg]\\
			&= 2V^2 \Ex_{\epsilon}\bigg[ \sup_{(\bpi,l)\in\Pi_n\times[K]}\frac{1}{n}\sum_{i=1}^n \epsilon_i
					\bigg(\frac{f_{\bpi}(\bX_i)}{f_l(\bX_i)} -1\bigg) \bigg].
\end{align}
Expanding $f_{\bpi}(\bX_i)$ we obtain
\begin{align}
\Ex_{\epsilon}\bigg[ \sup_{(\bpi,l)}\frac{1}{n}\sum_{i=1}^n \epsilon_i \bigg(\frac{f_{\bpi}(\bX_i)}{f_l(\bX_i)}
				-1\bigg) \bigg]
		&= \Ex_{\epsilon}\bigg[ \sup_{(\bpi,l)} \sum_{k=1}^K \frac{\pi_k}{n}\sum_{i=1}^n \epsilon_i
				\bigg(\frac{f_{k}(\bX_i)}{f_l(\bX_i)} -1\bigg)  \bigg]\\
		&=\Ex_{\epsilon}\bigg[ \max_{k,l\in[K]} \frac{1}{n}\sum_{i=1}^n \epsilon_i
				\bigg(\frac{f_{k}(\bX_i)}{f_l(\bX_i)} -1\bigg) \bigg].
\end{align}
We apply now \Cref{Hoeffding1} with $s= (k,l)$, $N = K^2$, $a=-V$, $b= V$ and $Y_{i,s} = \epsilon_i
\big(\frac{f_{k}(\bX_i)}{f_l(\bX_i)} -1\big)$. This yields
\begin{align}
\Ex_{\epsilon}\bigg[ \max_{k,l\in[K]}  \frac{1}{n}\sum_{i=1}^n \epsilon_i
				\bigg(\frac{f_{k}(\bX_i)}{f_l(\bX_i)} -1\bigg) \bigg]
    \le 2V \Big(\frac{\log K^2}{2n}\Big)^{\nicefrac12}.
\end{align}
This completes the proof of the lemma.
\end{proof}
Combining inequalities (\ref{concIneqZ},\ref{rademacher}) and \Cref{boundRademComplex}, we get that the inequality
\begin{equation}
F(\bfX) \le 8V^3\Big(\frac{\log K}{n}\Big)^{\nicefrac12}+ V\Big(\frac{\log(1/\delta)}{2n}\Big)^{\nicefrac12}
\end{equation}
holds with probability at least $1-\delta$. Noticing that $V\ge 1$ and, for $K\ge 2$, $\delta\in(0,K^{-1/31})$
we have $8\sqrt{\log K}+ \sqrt{(\nicefrac12){\log(1/\delta)}}\le 8\sqrt{\log(K/\delta)}$,
we get the first claim of the proposition. The second claim is a direct consequence
of \Cref{boundRademComplex} and \eqref{rademacher}.
\end{proof}

%%%%%%%%%%%%%%%%%%%%%%%%%%%%
\section{Proof of the lower bound for nearly-$D$-sparse aggregation}\label{sec:proof-lower}
%%%%%%%%%%%%%%%%%%%%%%%%%%%%
We prove the minimax lower bound for estimation in Kullback-Leibler risk using
the following slightly adapted version of Theorem~2.5 from \cite{tsybakov2009Nonparametric}.
Throughout this section, we denote by $\lambda_{\min,\bSigma}(k)$ and $\lambda_{\max,\bSigma}(k)$,
respectively, the smallest and the largest eigenvalue of all $k\times k$ principal minors of
the matrix $\bSigma$.
\begin{theorem}%(Theorem 2.5 from \cite{tsybakov2009Nonparametric})
\label{tsy_main_theo_nonparam_est}
For some integer $L\ge 4$ assume that
$\calH_\calF(\gamma,D)$ contains $L$ elements $f_{\bpi^{(1)}},\dots,f_{\bpi^{(L)}}$
satisfying the following two conditions.
\vspace{-10pt}
\begin{enumerate}[(i)]
\item $\KL(f_{\bpi^{(j)}}||f_{\bpi^{(k)}})\geq2s>0$,  for all pairs $(j,k)$ such that $1\leq j<k\leq L$.
\item For product densities $f_{\ell}^n$ defined on $\calX^n$ by $f_{\ell}^n(\bx_1,\ldots,\bx_n) =
f_{\bpi^{(\ell)}}(\bx_1)\times\ldots\times f_{\bpi^{(\ell)}}(\bx_n)$ it holds
\begin{equation}
  \max_{\ell\in[L]} \KL(f_{\ell}^n||f_1^n) \leq \frac{\log L}{16}.
\end{equation}
\end{enumerate}
\vspace{-10pt}
Then
\begin{equation}
\inf_{\hat f}\sup_{f \in \calH_\calF(\gamma,D)}\Pb\!_{f}\big(\KL(f||\hat f)\geq s\big)\geq 0.17.
\end{equation}
\end{theorem}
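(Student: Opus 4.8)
The statement is a variant of Tsybakov's Theorem~2.5 adapted to the Kullback-Leibler loss and to a restricted parameter set $\calH_\calF(\gamma,D)$, so the plan is to follow the classical reduction-to-testing argument. First I would reduce the estimation problem to a multiple hypothesis testing problem: given an estimator $\hat f$, define the test $\psi(\bX_1,\ldots,\bX_n) = \argmin_{\ell\in[L]} \KL(f_{\bpi^{(\ell)}}||\hat f)$ (breaking ties arbitrarily). The key observation is that if $\KL(f_{\bpi^{(j)}}||\hat f) < s$ for the true index $j$, then by condition~(i) and the (quasi-)triangle-type separation $\KL(f_{\bpi^{(j)}}||f_{\bpi^{(k)}})\ge 2s$ one can show that $j$ is the unique minimizer, hence $\psi = j$. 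More precisely, one verifies that on the event $\{\KL(f_{\bpi^{(j)}}||\hat f) < s\}$ no other $k$ can satisfy $\KL(f_{\bpi^{(k)}}||\hat f)\le \KL(f_{\bpi^{(j)}}||\hat f)$, because that would force, via the elementary inequality relating $\KL(f_{\bpi^{(j)}}||\hat f)$, $\KL(f_{\bpi^{(k)}}||\hat f)$ and $\KL(f_{\bpi^{(j)}}||f_{\bpi^{(k)}})$ (using the boundedness assumption to pass between $\KL$ and an $L^2$-type quasi-metric), a contradiction with the $2s$ separation. Therefore $\Pb_{f_{\bpi^{(j)}}}(\KL(f_{\bpi^{(j)}}||\hat f)\ge s) \ge \Pb_{f_{\bpi^{(j)}}}(\psi\ne j)$, and taking the supremum over $f\in\{f_{\bpi^{(1)}},\ldots,f_{\bpi^{(L)}}\}\subset\calH_\calF(\gamma,D)$ and the infimum over $\hat f$ reduces the theorem to lower bounding the minimax testing error $\inf_\psi \max_{\ell\in[L]} \Pb_{\ell}(\psi\ne\ell)$.

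\textbf{Second step.} I would then invoke the standard information-theoretic lower bound on the testing error — Fano's inequality in the form used in Tsybakov (Theorem~2.2 or its corollary Theorem~2.5). This gives
\[
\inf_\psi \max_{\ell\in[L]} \Pb_\ell(\psi\ne\ell) \ge 1 - \frac{\frac1L\sum_{\ell=1}^L \KL(f_\ell^n||f_1^n) + \log 2}{\log L}.
\]
Condition~(ii) says $\max_\ell \KL(f_\ell^n||f_1^n)\le (\log L)/16$, hence the average is also at most $(\log L)/16$. Plugging this in and using $L\ge 4$ (so $\log L\ge \log 4$, hence $(\log 2)/\log L \le 1/2$), one obtains the lower bound $1 - 1/16 - 1/2 = 7/16 > 0.43$; a marginally more careful bookkeeping — the paper's constant $0.17$ is conservative — certainly yields the claimed $0.17$. (In fact the cleaner route is to cite Tsybakov Theorem~2.5 directly: its hypotheses are exactly (i')–(ii') with the KL-separation playing the role of the semi-distance separation once the testing reduction above has been carried out, and its conclusion is the stated $p_{e,L}\ge \frac{\sqrt L}{1+\sqrt L}\bigl(1-2\alpha-\sqrt{2\alpha/\log L}\bigr)$ with $\alpha=1/16$, which for $L\ge 4$ exceeds $0.17$.)

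\textbf{Main obstacle.} The delicate point — the only place where working with KL rather than a genuine metric matters — is the first step: showing that a small value $\KL(f_{\bpi^{(j)}}||\hat f)<s$ at the true parameter forces $\psi=j$. Since KL is not symmetric and does not satisfy a triangle inequality, one cannot directly mimic the metric argument. The fix uses the boundedness assumption \eqref{densConst}: on the relevant class all densities lie in $[m,M]$, so $\KL(f||g)$ is sandwiched between constant multiples of $\|f-g\|_{L^2(\nu)}^2$ (an upper bound $\KL(f||g)\le \frac1m\|f-g\|_{L^2}^2$ type estimate in one direction, and $\KL(f||g)\ge c_M\|f-g\|_{L^2}^2$ in the other, with constants depending on $m,M$). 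This lets me transfer the problem to the Hilbertian semi-distance $d(f,g)=\|f-g\|_{L^2(\nu)}$, apply the genuine triangle inequality there, and translate back; the numerical constant $2$ in condition~(i) and the threshold $s$ must be tracked through these sandwich inequalities, which is exactly why the constant in the conclusion ($0.17$) is stated loosely rather than sharply. I would therefore either (a) state and prove a short lemma recording this KL/$L^2$ equivalence under \eqref{densConst}, or (b) simply observe that the estimator $\hat f$ may, without loss of generality, be assumed to take values in $\calH_\calF(\gamma,D)$ (projecting in KL onto this convex-ish set only decreases the risk against $f\in\calH_\calF(\gamma,D)$ up to a universal factor), after which the reduction to testing is the textbook one and Tsybakov's Theorem~2.5 applies verbatim with $\alpha=1/16$.
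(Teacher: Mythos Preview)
The paper does not actually prove this statement: it is introduced as ``a slightly adapted version of Theorem~2.5 from \cite{tsybakov2009Nonparametric}'' and is used as a black box in the subsequent lower-bound propositions. So your overall plan --- reduce to multiple testing, then invoke Fano/Tsybakov~2.5 with $\alpha=1/16$ --- is exactly the intended one, and your identification of the only nontrivial point (KL is not a semi-distance, so the reduction to testing needs care) is spot on.

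There is, however, a genuine gap in your fix~(a). You propose to sandwich $\KL(f\|g)$ between multiples of $\|f-g\|_{L^2(\nu)}^2$ and then use the triangle inequality in $L^2$. The sandwich holds when \emph{both} $f$ and $g$ lie in $[m,M]$ (this is precisely the content of the paper's Lemma on $\KL$ vs.\ $\|\bSigma^{1/2}(\bpi-\bpi')\|_2^2$), but the estimator $\hat f$ is arbitrary and need not be bounded. In fact there is no universal inequality of the form $\|f-\hat f\|_{L^2}^2\le C\,\KL(f\|\hat f)$ when only $f$ is bounded: a density $\hat f$ with a tall narrow spike can have arbitrarily large $L^2$ distance from $f$ while keeping $\KL(f\|\hat f)$ small. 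So the $L^2$ route, as written, breaks at the very first step of the translation. Your fix~(b) (project $\hat f$ onto the class) is also problematic, since KL-projection onto a nonconvex set of densities is neither well-defined nor obviously risk-reducing.

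The clean repair is to run the reduction through Hellinger (or total variation) rather than $L^2$. Define the test $\psi=\arg\min_{\ell} h(f_{\bpi^{(\ell)}},\hat f)$. The inequality $h^2(f,\hat f)\le \tfrac12\KL(f\|\hat f)$ holds with \emph{no} assumption on $\hat f$, so $\KL(f_{\bpi^{(j)}}\|\hat f)<s$ gives $h(f_{\bpi^{(j)}},\hat f)<\sqrt{s/2}$. On the other hand, between the hypotheses themselves both densities \emph{are} in $[m,M]$, so $\KL(f_{\bpi^{(j)}}\|f_{\bpi^{(k)}})\ge 2s$ yields $h(f_{\bpi^{(j)}},f_{\bpi^{(k)}})\ge c_V\sqrt{s}$ for a constant $c_V$ depending only on $V=M/m$. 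The triangle inequality for $h$ then forces $\psi=j$, completing the reduction. Note that this route loses a $V$-dependent constant: one obtains $\Pb_f(\KL(f\|\hat f)\ge c\,s)\ge 0.17$ rather than $\ge s$. The theorem as literally stated (with the exact factor $2$ in (i) matching the exact $s$ in the conclusion) is therefore a slight overstatement, but this is harmless for the paper's purposes since the downstream constants $C_2$, $C_3$ in the lower-bound propositions already depend on $m$, $M$ and absorb this factor.
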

To establish the bound claimed in \Cref{theorem:lower_bound}, we will split the problem into two parts,
corresponding to the following two subsets of $\calH_\calF(\gamma,D)$
\begin{equation}
\begin{array}{ll}
\calH_\calF(0,D) &= \big\{f_{\bpi} : \bpi\in\BB^K_+ \text{ such that }\exists \, J\subset [K]
\text{ with } \|\bpi_{J^c}\|_1 = 0 \text{ and } |J|\le D\big\}, \\
\calH_\calF(\gamma,1) &= \big\{f_{\bpi} : \bpi\in\BB^K_+ \text{ such that } \pi_1 = 1-\gamma \text{ and } \sum_{j=2}^K\pi_j = \gamma \big\}.
\end{array}
\end{equation}
We will show that over $\calH_\calF(0,D)$, we have a lower bound of order $\log(1+K/D)/n$ while over
$\calH_\calF(\gamma,1)$, a lower bound of order
$\big[\frac{\gamma^2}{n}\log\big(1+K/(\gamma\sqrt{n})\big)\big]^{\nicefrac12}$ holds true. Therefore,
the lower bound  over $\calH_\calF(\gamma,D)$ is larger than the average of these bounds.

For any $M\geq 1$ and $k\in [M-1]$, let  $\Omega_k^M$ be the subset of $\{0,1\}^M$ defined by
\begin{equation}
  \Omega_k^M := \Big \{\bomega \in \{0,1\}^M : \|\bomega\|_1=k \Big\}.
\end{equation}
Before starting, we remind here a version of the Varshamov-Gilbert lemma (see, for instance, \citep[Lemma 8.3]{RT11})
which will be helpful for deriving our lower bounds.
\begin{lemma}
\label{rigollet8.3}
Let $M\geq 4$ and $k\in[M/2]$ be two integers. Then there exist a subset $\Omega \subset \Omega_k^M$ and an absolute constant $C_1$
such that
\begin{equation}
  \|\bomega-\bomega'\|_1 \geq \frac{k+1}{4} \quad \forall \bomega,\bomega' \in \Omega \text{ \rm s.t. } \bomega \neq \bomega'
\end{equation}
and $L=|\Omega|$ satisfies $L\ge 4$ and
\begin{equation}
  \log L  \geq C_1 k\log\Big(1+\frac{eM}{ k}\Big).
\end{equation}
\end{lemma}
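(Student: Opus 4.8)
This is the classical Gilbert--Varshamov packing bound on the Hamming slice $\Omega_k^M$, and the plan is to prove it by a greedy covering argument. The first step is an elementary reformulation: if $\bomega\neq\bomega'$ both lie in $\Omega_k^M$, then $\|\bomega-\bomega'\|_1=2t$, where $t\ge1$ is the number of coordinates on which $\bomega$ carries a $1$ and $\bomega'$ a $0$ (equal, by the weight constraint, to the number on which the roles are interchanged); hence $\|\bomega-\bomega'\|_1\ge (k+1)/4$ is equivalent to $t\ge r_0:=\lceil (k+1)/8\rceil$. For a fixed $\bomega\in\Omega_k^M$, the number of $\bomega'\in\Omega_k^M$ at ``swap distance'' exactly $t$ equals $\binom{k}{t}\binom{M-k}{t}$ (choose the $t$ ones to delete and the $t$ zeros to fill in), so the ``ball'' $\{\bomega':\|\bomega-\bomega'\|_1<(k+1)/4\}$ has cardinality $B:=\sum_{t=0}^{r_0-1}\binom{k}{t}\binom{M-k}{t}$, the same for every centre.

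Next I would build $\Omega$ greedily: choose an arbitrary point of $\Omega_k^M$, discard the ball of radius $(k+1)/4$ around it, choose any remaining point, discard its ball, and repeat until $\Omega_k^M$ is exhausted. By construction all pairwise $\ell_1$-distances within the set $\Omega$ of chosen points are at least $(k+1)/4$, and since every step removes at most $B$ points, $L=|\Omega|\ge \binom{M}{k}/B$. It then suffices to show that $\binom{M}{k}/B\ge 4$ and that its logarithm is at least $C_1\, k\log(1+eM/k)$ for an absolute $C_1>0$.

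For this I would distinguish two regimes. When $k\le 7$ one has $r_0=1$, so $B=1$ and $L=\binom{M}{k}\ge\binom{M}{1}=M\ge4$; combined with $\binom{M}{k}\ge (M/k)^k$ and $M\ge 2k$ this already yields $\log L\ge C_1 k\log(1+eM/k)$ for a suitable $C_1$. When $k\ge8$, I would bound $B$ by $r_0$ times its largest term, which — the terms being increasing on the relevant range since $M\ge 2k$ — is the one at $t=r_0-1\le (k+1)/8$; using $\binom{k}{r_0-1}\le 2^k$ and $\binom{M-k}{r_0-1}\le (8eM/k)^{k/8}$ gives $\log B\le c\,k\log(eM/k)$ with an explicit $c<1$. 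Comparing exponents with $\log\binom{M}{k}$ then produces the claimed inequality, and since $k\ge 8$ the resulting lower bound also exceeds $\log4$, so $L\ge4$ throughout.

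The point that needs genuine care is that, in the dense regime $k\asymp M$, the crude estimate $\log\binom{M}{k}\ge k\log(M/k)$ is too weak (it degenerates as $M/k\to1$): there one must use $\log\binom{M}{k}$ of order $M$, while $\log(1+eM/k)$ is of order $1$, so the binomial estimates on both sides have to be sharp enough to survive this regime with one and the same absolute constant $C_1$. An alternative that replaces the counting by a short second-moment computation is to sample $N$ i.i.d.\ uniform points of $\Omega_k^M$, bound the expected number of pairs at distance $<(k+1)/4$ by $\binom{N}{2}B/\binom{M}{k}$ via a hypergeometric tail bound for the overlap of two random $k$-subsets, delete one endpoint from each offending pair so that at least $N/2$ points survive whenever $N\le\binom{M}{k}/B$, and then optimise over $N$; this gives the same conclusion, and I would keep whichever variant yields the cleaner constant.
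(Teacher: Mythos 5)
The paper does not actually prove this lemma: it is imported verbatim as the ``sparse'' Varshamov--Gilbert bound from \citep[Lemma 8.3]{RT11} and used as a black box, so there is no internal proof to compare against. Your greedy packing argument is the standard route to such a statement and is essentially the argument underlying the cited result. The reformulation $\|\bomega-\bomega'\|_1=2t$ with $t$ the number of swapped coordinates, the ball count $B=\sum_{t<r_0}\binom{k}{t}\binom{M-k}{t}$ with $r_0=\lceil (k+1)/8\rceil$, the greedy bound $L\ge \binom{M}{k}/B$, and the observation that $B=1$ when $k\le 7$ are all correct.

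The one place where your write-up stops short of a proof is exactly the point you flag yourself. In the dense regime (say $M=2k$) the estimate $\log\binom{M}{k}\ge k\log(M/k)$ combined with the crude bounds $\binom{k}{j}\le 2^k$ and $\binom{M-k}{j}\le (8eM/k)^{k/8}$ produces a \emph{negative} lower bound on $\log\big(\binom{M}{k}/B\big)$, so ``comparing exponents'' does not finish the proof as stated. The computation does close, but only with sharper estimates on both sides: use the entropy bound $\log\binom{M}{k}\ge k\log(M/k)+(M-k)\log\big(M/(M-k)\big)-\log(M+1)$ and bound the largest term of $B$ by $\big(ek/j\big)^j\big(e(M-k)/j\big)^j$ at $j=r_0-1\le (k+1)/8$. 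For instance at $M=2k$ this gives $\log\binom{2k}{k}\approx k\log 4\approx 1.39\,k$ against $\log B\le \tfrac{k}{4}\log(8e)+O(\log k)\approx 0.77\,k$, leaving a margin of about $0.6\,k$, which suffices both for $L\ge 4$ (check $k=8$, $M=16$: $L\ge 12870/65$) and for the claimed inequality with an absolute $C_1$, since $\log(1+eM/k)$ is then of order one. So the approach is right and the acknowledged gap is genuinely closable, but as written the dense regime is asserted rather than verified; either carry out that computation carefully or simply cite \citep[Lemma 8.3]{RT11} as the paper does.
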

We will also use the following lemma that allows us to relate the KL-divergence $\KL(f_{\bpi}||f_{\bpi'})$
to the Euclidean distance between the weight vectors $\bpi$ and $\bpi'$.
\begin{lemma}
\label{lemma:KLboundsLT}
If the dictionary $\calF$ satisfies the boundedness assumption~\eqref{densConst}, then for any
$f_{\bpi}, f_{\bpi'}\in \calH_\calF(\gamma,D)$ we have
\begin{equation}
  \frac{1}{2V^2M}\,\|\bSigma^{\nicefrac12}(\bpi'-\bpi)\|_2^2
  \leq \KL(f_{\bpi}||f_{\bpi'})
  \leq \frac{V^2}{2m}\,\|\bSigma^{\nicefrac12}(\bpi'-\bpi)\|_2^2.
\end{equation}
\end{lemma}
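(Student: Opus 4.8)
The plan is to derive both inequalities from a single second-order Taylor expansion of $\ell(u)=-\log u$, exploiting the fact that under \eqref{densConst} every mixture density $f_{\bpi}$ with $\bpi\in\BB_+^K$ is a convex combination of the $f_k$ and therefore takes values in $[m,M]$ pointwise; only this and the simplex constraint on $\bpi,\bpi'$ will be used.

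First I would write $\KL(f_{\bpi}\|f_{\bpi'})=\int f_{\bpi}\,\bigl(\ell(f_{\bpi'})-\ell(f_{\bpi})\bigr)\,d\nu$ and apply Taylor's formula with Lagrange remainder to $\ell$: for each $x\in\calX$ there is a value $\xi(x)$ lying between $f_{\bpi}(x)$ and $f_{\bpi'}(x)$, hence $\xi(x)\in[m,M]$, such that $\ell(f_{\bpi'})-\ell(f_{\bpi})=-\tfrac{1}{f_{\bpi}}(f_{\bpi'}-f_{\bpi})+\tfrac{1}{2\xi^2}(f_{\bpi'}-f_{\bpi})^2$. Multiplying by $f_{\bpi}$ and integrating, the first-order term contributes $-\int(f_{\bpi'}-f_{\bpi})\,d\nu=0$ because both $f_{\bpi}$ and $f_{\bpi'}$ integrate to one, so that
\[
\KL(f_{\bpi}\|f_{\bpi'})=\int \frac{f_{\bpi}(x)}{2\,\xi(x)^2}\,\bigl(f_{\bpi'}(x)-f_{\bpi}(x)\bigr)^2\,d\nu(x).
\]
Since $f_{\bpi}(x)\in[m,M]$ and $\xi(x)\in[m,M]$, the prefactor is squeezed between $\tfrac{m}{2M^2}$ and $\tfrac{M}{2m^2}$, and it remains only to identify $\int(f_{\bpi'}-f_{\bpi})^2\,d\nu$ with $\|\bSigma^{\nicefrac12}(\bpi'-\bpi)\|_2^2$. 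For this I would write $f_{\bpi'}-f_{\bpi}=\sum_k(\pi'_k-\pi_k)f_k=\sum_k(\pi'_k-\pi_k)\bar f_k$, the second equality holding because $\sum_k(\pi'_k-\pi_k)=0$, so that $\int(f_{\bpi'}-f_{\bpi})^2\,d\nu=(\bpi'-\bpi)^\top\bSigma(\bpi'-\bpi)$ by the definition of the Gram matrix (taken here with respect to the reference measure $\nu$). Finally, since $V=M/m\ge 1$, one has $\tfrac{m}{2M^2}\ge\tfrac{m^2}{2M^3}=\tfrac{1}{2V^2M}$ and $\tfrac{M}{2m^2}\le\tfrac{M^2}{2m^3}=\tfrac{V^2}{2m}$, which yields exactly the two stated bounds.

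The argument is essentially mechanical; there is no genuine obstacle. The only points that deserve a line of care are that one may freely subtract the fixed $f_0$ inside the sum — this is precisely what turns the $L^2(\nu)$ distance between the mixtures into the quadratic form of the Gram matrix of the centered functions $\bar f_k$ — and the bookkeeping needed to pass from the sharp constants $\tfrac{m}{2M^2},\tfrac{M}{2m^2}$ produced by the Taylor argument to the slightly looser but cleaner $\tfrac{1}{2V^2M},\tfrac{V^2}{2m}$ announced in the statement.
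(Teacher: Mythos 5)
Your proof is correct and follows essentially the same route as the paper's: a second-order Taylor expansion of $-\log$, cancellation of the first-order term because both mixtures integrate to one, squeezing of the quadratic prefactor via the boundedness assumption, and identification of $\int(f_{\bpi'}-f_{\bpi})^2\,d\nu$ with $\|\bSigma^{\nicefrac12}(\bpi'-\bpi)\|_2^2$ (the paper likewise treats $\bSigma$ as the Gram matrix w.r.t.\ $\nu$ in this section). The only cosmetic difference is that the paper expands $-\log u$ in the ratio $u=f_{\bpi'}/f_{\bpi}\in[1/V,V]$ with the two-sided bounds $(1-u)+\tfrac{1}{2V^2}(u-1)^2\le-\log u\le(1-u)+\tfrac{V^2}{2}(u-1)^2$, whereas you use the Lagrange remainder directly, which yields the same conclusion with slightly sharper intermediate constants.
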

\begin{proof}
Using the Taylor expansion, one can check that for any $u\in[1/L,L]$, we
have $(1-u)+ \frac{1}{2V^2}(u-1)^2 \leq -\log u \leq (1-u)+\frac{V^2}{2}(u-1)^2$.
Therefore,
\begin{equation}
  \frac{1}{2V^2}\int_{\calX}\Big(\frac{f_{\bpi'}}{f_{\bpi}}-1\Big)^2f_{\bpi}\,d\nu
  \leq \KL(f_{\bpi}||f_{\bpi'}) \leq \frac{V^2}{2}\int_{\calX}\Big(\frac{f_{\bpi'}}{f_{\bpi}}-1\Big)^2
  f_{\bpi}\,d\nu.
\end{equation}
Since $\calF$ satisfies the boundedness assumption, we get
\begin{equation}
\label{KLboundsLT}
  \frac{1}{2MV^2}\int_{\calX}\big(f_{\bpi'}-f_{\bpi}\big)^2d\nu
  \leq \KL(f_{\bpi}||f_{\bpi'}) \leq \frac{V^2}{2m}\int_{\calX}\big(f_{\bpi'}-f_{\bpi}\big)^2d\nu.
\end{equation}
The claim of the lemma follows from these inequalities and the fact that
$\int_{\calX}\big(f_{\bpi'}-f_{\bpi}\big)^2d\nu = \|\bSigma^{\nicefrac12}(\bpi'-\bpi)\|_2^2$.
\end{proof}

\subsection{Lower bound on $\calH_\calF(0,D)$} % (fold)

We show here that the lower bound ${(\nicefrac{D}n)\log(1+\nicefrac{e K}D)}
\wedge\big((\nicefrac1n){\log(1+\nicefrac{K}{\sqrt{n}})}\big)^{\nicefrac12}$ holds
when we consider the worst case  error for $f^*$ belonging to the set $\calH_\calF(0,D)$.
\begin{proposition}
\label{prop:lower:1}
If $\log(1+eK)\le  n$ then, for the constant
\begin{equation}
C_2=\frac{C_1 m\bar\kappa_{\bSigma}(2D,0)}{2^{9}V^2 M(C_1m\vee 4V^2\lambda_{\max,\bSigma}(2D))}
\ge \frac{C_1 m\varkappa_*}{2^{9}V^2 M(C_1m\vee 4V^2\varkappa^*)},
\end{equation}
we have
\begin{equation}
\inf_{\hat f}\sup_{f \in \calH_\calF(0,D)}\Pb\!_{f}\bigg(\KL(f||\hat f)\geq
C_2\,\frac{D\log(1+ K/D)}{n}\bigwedge\Big(\frac{\log(1+K/\sqrt{n})}{n}\Big)^{\nicefrac12}\bigg)
\geq 0.17.
\end{equation}
\end{proposition}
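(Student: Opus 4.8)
The plan is to apply \Cref{tsy_main_theo_nonparam_est} to a well-chosen finite family of densities inside $\calH_\calF(0,D)$ constructed from the Varshamov--Gilbert lemma. First I would consider the regime where $D\log(1+eK/D)$ is the active term. Using \Cref{rigollet8.3} with $M=K$ and $k$ of order $D$ (say $k=\lfloor D/2\rfloor$, after reducing to the case $K\ge 4D$ or treating small $K$ separately), I obtain a packing $\Omega\subset\Omega_k^K$ with $\log|\Omega|\gtrsim D\log(1+K/D)$ and pairwise $\ell_1$-separation at least $(k+1)/4$. To each $\bomega\in\Omega$ I associate the weight vector $\bpi^{(\bomega)} = \bpi_0 + \rho\,\bar\bomega$, where $\bpi_0$ is a fixed reference point in the simplex supported on a small index set disjoint from the active coordinates (e.g. uniform on $\{1,\dots,D\}$ or simply a single atom, whichever keeps things in $\calH_\calF(0,D)$), $\bar\bomega$ is a centered/normalized version of $\bomega$ chosen so that $\sum_j\bar\omega_j=0$ so the perturbed vector still sums to one, and $\rho>0$ is a radius to be tuned. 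Since $\|\bomega\|_0=k$ and $\bpi_0$ is $D$-sparse with $k\le D$, after possibly doubling the sparsity bound (which is why $\bar\kappa_{\bSigma}(2D,0)$ and $\lambda_{\max,\bSigma}(2D)$ appear) each $f_{\bpi^{(\bomega)}}$ lies in $\calH_\calF(0,2D)$; one then checks these also satisfy the boundedness constraint for $\rho$ small, so they are genuine densities in the class. (A cleaner variant keeps everything in $\calH_\calF(0,D)$ by taking $k=\lfloor D/2\rfloor$ and $\bpi_0$ supported on $D-k$ further coordinates; either way the constant only changes by a bounded factor.)

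Next I would verify the two hypotheses of \Cref{tsy_main_theo_nonparam_est}. For the separation condition (i), \Cref{lemma:KLboundsLT} gives $\KL(f_{\bpi^{(\bomega)}}\|f_{\bpi^{(\bomega')}})\ge \frac{1}{2V^2M}\|\bSigma^{\nicefrac12}(\bpi^{(\bomega)}-\bpi^{(\bomega')})\|_2^2 \ge \frac{\rho^2}{2V^2M}\,\bar\kappa_{\bSigma}(2D,0)\,\|\bomega-\bomega'\|_2^2/(2D)$ after invoking the compatibility/eigenvalue lower bound on the $2D$-sparse difference vector (note $\|\bomega-\bomega'\|_2^2 = \|\bomega-\bomega'\|_1 \ge (k+1)/4 \gtrsim D$ since the coordinates are $\{0,1\}$-valued), so $2s \asymp \rho^2\bar\kappa_{\bSigma}(2D,0)/V^2M$. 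For the information condition (ii), I use the upper bound in \Cref{lemma:KLboundsLT} together with $\KL(f_\ell^n\|f_1^n) = n\,\KL(f_{\bpi^{(\ell)}}\|f_{\bpi^{(1)}}) \le \frac{nV^2}{2m}\|\bSigma^{\nicefrac12}(\bpi^{(\ell)}-\bpi^{(1)})\|_2^2 \le \frac{nV^2}{2m}\,\lambda_{\max,\bSigma}(2D)\,\rho^2\cdot 2\|\bomega-\bomega^{(1)}\|_2^2 \lesssim \frac{nV^2\lambda_{\max,\bSigma}(2D)}{m}\rho^2 k$, which must be $\le \frac{\log L}{16} \asymp D\log(1+K/D)$. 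Since $k\asymp D$, this forces $\rho^2 \asymp \frac{m\log(1+K/D)}{nV^2\lambda_{\max,\bSigma}(2D)}$; plugging back into the formula for $s$ yields $s\asymp \frac{C_1 m\,\bar\kappa_{\bSigma}(2D,0)}{V^4 M\,\lambda_{\max,\bSigma}(2D)}\cdot\frac{D\log(1+K/D)}{n}$, matching the claimed constant $C_2$ up to the bookkeeping of numerical factors ($2^9$, the $\vee$ with $C_1 m$, etc., coming from the separation slack and from which of the two bounds on $\rho$ binds). The conclusion of \Cref{tsy_main_theo_nonparam_est} is exactly the claimed statement.

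I would then handle the second term $(n^{-1}\log(1+K/\sqrt{n}))^{\nicefrac12}$, which corresponds to taking $D=1$ (the class $\calH_\calF(0,1)$ is just the dictionary $\{f_1,\dots,f_K\}$, but more usefully one uses a two-point-per-direction hypercube argument, or directly the $D=1$ instance of the above with $k=1$): here $\log L \asymp \log K$, the radius tuning gives $\rho^2 \asymp \frac{m\log K}{nV^2\lambda_{\max}}$, and $s \asymp \frac{\bar\kappa}{V^4 M}\cdot\frac{\log K}{n}$ — but this is smaller than the $\sqrt{\log K/n}$ rate only when it is the binding constraint, and the $\bigwedge$ in the statement records precisely the switch at $n\asymp \log(1+K/\sqrt n)$, i.e. it accounts for the fact that once $D\log(1+K/D)\gtrsim\sqrt{n\log K}$ the sparse perturbation can no longer be made large enough while staying in the simplex, so the $\ell_1$-ball (i.e.\ convex aggregation) lower bound $(n^{-1}\log(1+K/\sqrt n))^{\nicefrac12}$ takes over. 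The final bound follows by noting that for fixed $D$ the worst $f^*$ achieves the maximum of the two regimes, hence the minimum of the two expressions governs the guaranteed separation, and by absorbing $\bar\kappa_{\bSigma}(2D,0)\ge\varkappa_*$, $\lambda_{\max,\bSigma}(2D)\le\varkappa^*$ to get the displayed explicit lower bound on $C_2$.

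The main obstacle I anticipate is the feasibility bookkeeping: ensuring the perturbed vectors $\bpi^{(\bomega)}=\bpi_0+\rho\bar\bomega$ simultaneously (a) stay in the simplex $\BB_+^K$ — this needs $\rho$ small enough that no coordinate goes negative, a mild constraint since $\rho$ is already $O(\sqrt{(\log K)/n})$ and the condition $\log(1+eK)\le n$ guarantees this is compatible — (b) remain in $\calH_\calF(0,D)$ (or $\calH_\calF(0,2D)$, which only costs a factor in the constant via $\bar\kappa_{\bSigma}(2D,0)$ and $\lambda_{\max,\bSigma}(2D)$), and (c) satisfy the component boundedness \eqref{densConst} so that \Cref{lemma:KLboundsLT} applies. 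A second, more routine obstacle is pinning down the exact numerical constant $C_2$ with the particular combination $C_1 m\vee 4V^2\lambda_{\max,\bSigma}(2D)$ in the denominator: this $\vee$ arises because the admissible radius $\rho$ is the minimum of the value forced by the KL-information constraint (ii) and the value forced by keeping the hypotheses feasible / the separation slack, and whichever binds determines which term appears — carrying both through and taking the worst case is what produces that max.
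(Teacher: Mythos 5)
Your construction for the regime where $D\log(1+K/D)/n$ is the smaller term is essentially the paper's: a Varshamov--Gilbert packing of sparse binary vectors, hypotheses obtained by perturbing a base point of the simplex in the directions of (differences of) these vectors, the two-sided KL--Euclidean comparison of \Cref{lemma:KLboundsLT} to check conditions (i) and (ii) of \Cref{tsy_main_theo_nonparam_est}, and an amplitude tuned from condition (ii). The simplex-feasibility issue you flag is handled in the paper by writing the hypotheses as convex combinations $\bpi^{(\ell)}=(1-\varepsilon)\bpi^{(1)}+\varepsilon\,\bomega^{(\ell)}/d$ with $\varepsilon\in[0,1]$, which stay in $\BB^K_+$ automatically; so far, same proof.

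The genuine gap is in the second term of the minimum, $\big(\log(1+K/\sqrt n)/n\big)^{\nicefrac12}$, and it is tied to a feasibility claim of yours that is false. Nonnegativity of the perturbed weights requires the amplitude per active coordinate to be $O(1/D)$, i.e.\ $D^2\log(1+K/D)\lesssim n$; this is \emph{not} implied by $\log(1+eK)\le n$ (take $D=K=n$), so your first construction simply breaks down for large $D$, and that is exactly the regime in which the $\bigwedge$ term must be proved. Your proposed fix does not work either: ``taking $D=1$'' gives, by your own computation, $s\asymp(\log K)/n$, which is of strictly smaller order than $\big((\log(1+K/\sqrt n))/n\big)^{\nicefrac12}$ and hence proves nothing there; and invoking ``the $\ell_1$-ball lower bound'' is circular, since the hypotheses must lie in $\calH_\calF(0,D)$. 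The paper closes this with one construction covering both regimes: define $d$ as the largest integer with $d\le D$ and $d^2\log(1+eK/d)\le A_1n$ (where $A_1=4\vee 16V^2\lambda_{\max,\bSigma}(2D)/(C_1m)$), set $\varepsilon^2=d^2\log(1+eK/d)/(A_1n)\le1$ --- so when the cap binds the amplitude saturates at $\varepsilon\asymp1$ instead of the sparsity growing --- obtain the rate $d\log(1+eK/d)/n$, and then use maximality of $d$: either $d=D$, giving the first term, or $(d+1)^2\log(1+eK/(d+1))>A_1n$, which together with $(d+1)^2\le A_1n$ forces $d\log(1+eK/d)\ge\tfrac12\big(A_1n\log(1+eK/\sqrt{A_1n})\big)^{\nicefrac12}\ge\big(n\log(1+K/\sqrt n)\big)^{\nicefrac12}$. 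The ``$4\vee$'' in $A_1$ (equivalently the $C_1m\vee4V^2\lambda_{\max,\bSigma}(2D)$ in $C_2$) is what makes this last comparison work with an absolute constant, rather than coming from two competing admissible radii as you conjectured. You need to add this capped-sparsity step; without it the proposition is proved only when $D^2\log(1+K/D)\lesssim n$.
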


\begin{proof}
We assume that $D \leq \nicefrac{K}2$. The case $D> \nicefrac{K}2$ can be reduced to the
case $D = \nicefrac{K}2$ by using the inclusion $\calH_\calF(0,\nicefrac{K}2)\subset \calH_\calF(0,D)$. Let us set $A_1 = 4\vee {16 V^2\lambda_{\max,\bSigma}(2D)}/{(C_1 m)}$
and denote by $d$ the largest integer such that
\begin{equation}\label{eqFvSxOa}
d\le D\quad\text{and}\quad d^2\log\Big(1+\frac{e  K}{d}\Big)\le A_1 n.
\end{equation}
According to~\Cref{rigollet8.3}, there exists a subset
$\Omega = \{\bomega^{(\ell)}:\ell\in[L]\}$ of  $\Omega_{d}^{K}$ of cardinality $L\ge 4$
satisfying $\log L\geq {C_1 d}\log(1+{eK}/{d})$ such that for any pair of distinct
elements $\bomega^{(\ell)}$, $\bomega^{(\ell')}\in \Omega$ we have
$\|\bomega^{(\ell)}-\bomega^{(\ell')}\|_1 \geq d/4$. Using these binary vectors $\bomega^{(\ell)}$, we define the set $\mathcal{D} =
\{ \bpi^{(1)},\dots,\bpi^{(L)}\}\subset \BB_+^{K}$ as follows:
\begin{equation}\label{eqFvSxTw}
\bpi^{(1)}=\bomega^{(1)}/d,\quad
\bpi^{(\ell)}=(1-\varepsilon)\bpi^{(1)}+\varepsilon \bomega^{(\ell)}/d,\quad \ell=2,\ldots,L.
\end{equation}
Clearly, for every $\varepsilon\in[0,1]$, the vectors $\bpi^{(\ell)}$ belong to  $\BB^K_+$.
Furthermore, for any pair of distinct
values $\ell,\ell'\in[L]$, we have $\|\bpi^{(\ell)}-\bpi^{(\ell')}\|_q^q = (\varepsilon/d)^q
\|\bomega^{(\ell)}-\bomega^{(\ell')}\|_1\ge (\varepsilon/d)^qd/4$.  In view of~\Cref{lemma:KLboundsLT},
this yields
\begin{equation}\label{eqFvSxTh}
\KL(f_{\bpi^{(\ell)}}||f_{\bpi^{(\ell')}})
    \geq \frac{\bar\kappa_{\bSigma}(2d,0)}{4V^2M d} \big{\|}\bpi^{(\ell)} - \bpi^{(\ell')}\big{\|}_1^2
    \geq \frac{\bar\kappa_{\bSigma}(2D,0)}{64V^2M}\times\frac{\varepsilon^2}{d}.
\end{equation}
Let us choose
\begin{equation}
\varepsilon^2 = \frac{d^2 \log(1+e  K/d)}{n A_1}.
\end{equation}
It follows from \eqref{eqFvSxOa} that $\varepsilon\le 1$. Inserting this value of
$\varepsilon$ in \eqref{eqFvSxTh}, we get
\begin{equation}
\KL(f_{\bpi^{(\ell)}}||f_{\bpi^{(\ell')}}) \geq 2 C_2\,\frac{d\log(1+e  K/d)}{n}.
\end{equation}
This shows that condition (i) of \Cref{tsy_main_theo_nonparam_est} is satisfied with
$s= C_2\,(\nicefrac{d}{n})\log(1+e  K/d)$.  For the second condition of the same theorem,
we have
\begin{align}
\max_{\ell\in[L]}\KL(f^n_{\ell}||f^n_{1}) &= n\max_{\ell}\KL(f_{\bpi^{(\ell)}}||f_{\bpi^{(1)}})\\
&\leq \frac{ n V^2\lambda_{\max,\bSigma}(2d)}{2m}\max_\ell \|\bpi^{(\ell)} - \bpi^{(1)}\|_2^2\\
&\leq \frac{ n V^2\lambda_{\max,\bSigma}(2D)}{m} \times \frac{\varepsilon^2}{d},
\end{align}
since one can check that $\|\bpi^{(\ell)} - \bpi^{(1)}\|_2^2 \le (\varepsilon/d)^2
\|\bomega^{(\ell)}-\bomega^{(1)}\|_1\le 2\varepsilon^2/d$. Therefore, using the definition
of $\varepsilon$, we get
\begin{align}
\max_{\ell\in[L]}\KL(f^n_{\ell}||f^n_{1})
    &\le \frac{ n V^2\lambda_{\max,\bSigma}(2D)}{m}\times\frac{C_1dm\log(1+e  K/d)}
        {16 nV^2\lambda_{\max,\bSigma}(2D)}\\
    &= \frac{C_1d\log(1+e  K/d)}{16}\le \frac{\log L}{16}.
\end{align}
\Cref{tsy_main_theo_nonparam_est} implies that
\begin{equation}\label{eqFvSxTwa}
\inf_{\hat f}\sup_{f \in \calH_\calF(0,D)}\Pb\!_{f}\bigg(\KL(f||\hat f)\geq
C_2\,\frac{d\log(1+e  K/d)}{n}\bigg)\geq 0.17.
\end{equation}
%Let us consider separately the following two cases:
%\begin{align}
%\textbf{Case A}:&\quad \Big(\frac{D\log(1+eK/D)}{n}\Big)^2\le \frac{\log(1+K/\sqrt{n})}{n},\\
%\textbf{Case B}:&\quad \Big(\frac{D\log(1+eK/D)}{n}\Big)^2> \frac{\log(1+K/\sqrt{n})}{n}.
%\end{align}
%In Case A, we get $\psi(D)\le \psi(\sqrt{n})$ where $\psi(x) = x\log(1+K/x)$ is an
%increasing function on $[1,\infty)$. Therefore, we get $D\le \sqrt{n}$. This implies that
%\begin{align}
%\frac{D^2\log(1+eK/D)}{n}
%    &\le \Big(\frac{D\log(1+eK/D)}{n}\Big)^2\frac{n}{\log(1+eK/D)}\\
%    &\le \frac{\log(1+K/\sqrt{n})}{n}\frac{n}{\log(1+eK/D)}\\
%    &\le \frac{\log(1+K/D)}{\log(1+eK/D)}\le 1.
%\end{align}
%Thus, $d=D$  and the claim of the proposition follows from \eqref{eqFvSxTwa}.
%In case B,
We use the fact that $d$ is the largest integer satisfying
\eqref{eqFvSxOa}. Therefore, either $d+1>D$ or
\begin{equation}\label{eqFvSxTha}
(d+1)^2\log\Big(1+\frac{e  K}{d+1}\Big)\le A_1 n.
\end{equation}
If $d\ge D$, then the claim of the proposition follows from \eqref{eqFvSxTwa}, since
$d\log(1+eK/d)\ge D\log(1+eK/D)$. On the other hand, if \eqref{eqFvSxTha} is true, then
\begin{align}
  d\log(1+eK/d)
    &\ge \frac12 (d+1)\log(1+eK/(d+1))
    \ge \frac12 \big({A_1 n}{\log(1+eK/(d+1))}\big)^{\nicefrac12}.
\end{align}
In addition, $d^2\log(1+eK/d)\le A_1 n$ implies that $(d+1)^2 \le A_1 n$. Combining the last
two inequalities, we get the inequality $d\log(1+eK/d)  \ge \nicefrac12 \big({A_1 n}{\log(1+eK/\sqrt{A_1n})}\big)^{\nicefrac12}\ge  \big({n}{\log(1+eK/\sqrt{n})}\big)^{\nicefrac12}$.
Therefore, in view of \eqref{eqFvSxTwa}, we get the claim of the proposition.
\end{proof}

\subsection{Lower bound on $\calH_\calF(\gamma,1)$} % (fold)

Next result shows that the lower bound ${\frac{\gamma^2}n\log\big(1+\frac{K}{\gamma\sqrt{n}}\big)}$
holds for the worst case error when $f^*$ belongs to the set $\calH_\calF(\gamma,1)$.
\begin{proposition}\label{prop:lower:2}
Assume that
\begin{equation}
\Big(\frac{\log(1+eK)}{n}\Big)^{\nicefrac12}\le 2\gamma.
\end{equation}
Then, for the constant $C_3 =
\frac{C_1 m \bar\kappa_{\bSigma}(2D,0) }{2^{12}V^4 M\lambda_{\max,\bSigma}(2D)}$,
it holds that
\begin{equation}
\inf_{\hat f}\sup_{f \in \calH_\calF(\gamma,1)}\Pb\!_{f}\bigg(\KL(f||\hat f)\geq
C_3\,\Big\{{\frac{\gamma^2}n\log\Big(1+\frac{K}{\gamma\sqrt{n}}\Big)}\Big\}^{\nicefrac12}\bigg)\geq 0.17.
\end{equation}
\end{proposition}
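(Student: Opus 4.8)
The plan is to invoke \Cref{tsy_main_theo_nonparam_est} with a family of test densities, all lying in $\calH_\calF(\gamma,1)$, obtained by spreading the mass $\gamma$ uniformly over a carefully chosen number $p$ of dictionary components, the integer $p$ being tuned so that condition (ii) of that theorem is essentially tight. Concretely, apply the Varshamov--Gilbert bound \Cref{rigollet8.3} with $M=K-1$ and $k=p$ to obtain binary vectors $\bomega^{(1)},\dots,\bomega^{(L)}\in\Omega_p^{K-1}$ with pairwise Hamming distance at least $(p+1)/4$ and $\log L\ge C_1 p\log(1+e(K-1)/p)$, and set $\pi^{(\ell)}_1=1-\gamma$ and $\pi^{(\ell)}_j=(\gamma/p)\,\omega^{(\ell)}_{j-1}$ for $j\in\{2,\dots,K\}$. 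Then $\bpi^{(\ell)}\in\BB^K_+$ with $\pi^{(\ell)}_1=1-\gamma$ and $\sum_{j\ge2}\pi^{(\ell)}_j=\gamma$, so $f_{\bpi^{(\ell)}}\in\calH_\calF(\gamma,1)$; moreover $\bpi^{(\ell)}-\bpi^{(\ell')}$ is supported on at most $2p$ of the coordinates $\{2,\dots,K\}$, with $\|\bpi^{(\ell)}-\bpi^{(\ell')}\|_1=(\gamma/p)\|\bomega^{(\ell)}-\bomega^{(\ell')}\|_1$ and $\|\bpi^{(\ell)}-\bpi^{(\ell')}\|_2^2=(\gamma/p)^2\|\bomega^{(\ell)}-\bomega^{(\ell')}\|_1$. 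We assume $p\le(K-1)/2$ so that \Cref{rigollet8.3} applies; this holds in particular whenever $\gamma\sqrt n\lesssim K$, e.g.\ when $K\ge\sqrt n$.

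Feeding these two identities into \Cref{lemma:KLboundsLT} --- bounding $\KL$ from below through the compatibility constant $\bar\kappa_{\bSigma}(2p,0)$ and the $\ell_1$-norm, and from above through $\lambda_{\max,\bSigma}(2p)\le\varkappa^*$ and the $\ell_2$-norm --- yields, using $\|\bomega^{(\ell)}-\bomega^{(\ell')}\|_1\ge(p+1)/4$ and $\|\bomega^{(\ell)}-\bomega^{(1)}\|_1\le 2p$,
\[
\KL\big(f_{\bpi^{(\ell)}}\|f_{\bpi^{(\ell')}}\big)\ \gtrsim\ \frac{\bar\kappa_{\bSigma}(2p,0)}{V^2M}\cdot\frac{\gamma^2}{p}\quad(\ell\ne\ell'),\qquad n\,\KL\big(f_\ell^n\|f_1^n\big)\ \lesssim\ \frac{V^2\varkappa^*}{m}\cdot\frac{n\gamma^2}{p}.
\]
Since $\log L\ge C_1 p\log(1+e(K-1)/p)$, condition (ii) of \Cref{tsy_main_theo_nonparam_est} holds as soon as $p^2\log(1+e(K-1)/p)\ge A_2\,n\gamma^2$ for a suitable constant $A_2$ depending only on $m,V,\varkappa^*$. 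I therefore take $p$ to be the smallest integer in $[1,(K-1)/2]$ with this property; condition (i) then holds with $2s$ equal to the left-hand displayed quantity, i.e.\ $s\asymp\gamma^2/p$, so \Cref{tsy_main_theo_nonparam_est}, applied with the class $\calH_\calF(\gamma,1)$ (which contains all the $f_{\bpi^{(\ell)}}$), gives the asserted probability lower bound at level $s$.

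It remains to identify $s\asymp\gamma^2/p$ with the announced rate. If $p\ge2$ then, by minimality of $p$, the value $p-1$ violates the defining inequality, whence $(p-1)^2\log(1+e(K-1)/p)<A_2n\gamma^2$ and, using $p\le2(p-1)$, $p^2<4A_2n\gamma^2/\log(1+e(K-1)/p)$; this bounds $p$ by a fixed multiple of $\gamma\sqrt n$, which in turn shows (up to a harmless constant inside the logarithm) that $\log(1+e(K-1)/p)\gtrsim\log(1+K/(\gamma\sqrt n))$, and substituting back gives $s\asymp\gamma^2/p\gtrsim\gamma\sqrt{\log(1+K/(\gamma\sqrt n))/n}$, which is the claimed rate with a constant of the form of the stated $C_3$. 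The hypothesis $(\log(1+eK)/n)^{1/2}\le2\gamma$ is precisely what guarantees that this separation dominates the announced rate in the remaining edge case $p=1$. The one point to handle with care --- conceptually routine but fiddly --- is the constant bookkeeping: absorbing the $p$-dependence of $\bar\kappa_{\bSigma}(2p,0)$ and $\lambda_{\max,\bSigma}(2p)$ into $p$-free constants, and carrying out the standard ``$\log(1+\cdot)$ up to constants'' step that turns the implicit definition of $p$ into the explicit $\log(1+K/(\gamma\sqrt n))$ rate.
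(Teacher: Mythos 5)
Your proposal is correct and follows essentially the same route as the paper: a Varshamov--Gilbert packing of $\gamma$-mass spread over $p$ coordinates, \Cref{lemma:KLboundsLT} for the two-sided KL/Euclidean comparison, and \Cref{tsy_main_theo_nonparam_est}. The only (immaterial) difference is that you take $p$ to be the smallest integer making condition (ii) hold and recover the separation rate from minimality, whereas the paper takes the largest integer delivering the separation rate and gets condition (ii) from maximality; the edge cases ($p=1$ and $p>(K-1)/2$) are dispatched with the same brevity in both.
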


\begin{proof}
Let $C>2$ be a constant the precise value of which will be specified later.
Denote by  $d$ the largest integer satisfying
\begin{equation}\label{eqFvSxFv}
	d\sqrt{\log(1+eK/d)} \leq C{\gamma\sqrt{n}}.
\end{equation}
Note that $d\ge 1$ in view of the condition $(\frac{\log(1+eK)}{n})^{\nicefrac12}\le 2\gamma$ of the
proposition. This readily implies that $d\leq C\gamma\sqrt{n}$ and, therefore,
\begin{equation}\label{eqFvSxSx}
\frac{\gamma}{d}
    \ge C^{-1}\Big\{{\frac1n\log\Big(1+\frac{eK}{C\gamma\sqrt{n}}\Big)}\Big\}^{\nicefrac12}
    \ge 2C^{-2}\Big\{{\frac1n\log\Big(1+\frac{K}{\gamma\sqrt{n}}\Big)}\Big\}^{\nicefrac12}.
\end{equation}
Let us first consider the case $d \leq (K-1)/2$. According to \Cref{rigollet8.3}, there
exists a subset $\Omega \subset \Omega_{d}^{K-1}$ of cardinality $L$ satisfying
$\log L\geq C_1\log\big(1+\frac{e(K-1)}{d}\big)$ and $\|\bomega^{(\ell)}-\bomega^{(\ell')}\|_1\geq d/4$
for any pair of distinct elements $\bomega,\bomega'$ taken from $\Omega$. With these
binary vectors in hand, we define the set $\calD \subset \BB_+^K$ of cardinality $L$ as follows:
\begin{equation}
\calD = \Big\{\bpi = \big(1-\gamma,{\gamma}\bomega/{d}\big) : \quad\omega \in \Omega\Big\}.
\end{equation}
It is clear that all the vectors of $\calD$ belong to $\calH_\calF(\gamma,1)$.
Let us fix now an element of $\calD$ and denote it by $\bpi^1$, the corresponding element of
$\Omega$ being denoted by $\bomega^1$. We have
\begin{align}
 \max_{\bpi\in\calD}\KL(f^n_{\bpi}||f^n_{\bpi^1})
        &\leq \frac{nV^2}{2m}\max_{\bpi\in\calD} \|\bSigma^{\nicefrac12}(\bpi-\bpi^{1})\|_2^2 \\
        &\leq \frac{nV^2\lambda_{\max,\bSigma}(2d)\gamma^2}{2m d^2} \max_{\bomega\in\Omega}
                \|\bomega-\bomega^{1}\|_2^2\\
        &\leq \frac{nV^2\lambda_{\max,\bSigma}(2d)\gamma^2}{m d}. \label{eqKLlowerBoundInq}
\end{align}
The definition of $d$ yields $(d+1)\sqrt{\log(1+eK/(d+1))}> C\gamma\sqrt{n}$, which implies that
\begin{align}
\frac{\gamma^2}{d}
        \leq 2(d+1)\frac{\gamma^2}{(d+1)^2}
        \leq 2(d+1)\frac{\log\big(1+eK/(d+1)\big)}{nC^2}
        \leq \frac{4d\log\big(1+e(K-1)/d\big)}{nC^2}.
\end{align}
Combined with \cref{eqKLlowerBoundInq}, this implies that
\begin{align}
\max_{\bpi\in\calD}\KL(f^n_{\bpi}||f^n_{\bpi^1})
        &\leq \frac{nV^2\lambda_{\max,\bSigma}(2d)}{m}\times \frac{4d\log\big(1+e(K-1)/d\big)}{nC^2}\\
        &= \frac{4V^2\lambda_{\max,\bSigma}(2d)}{m C^2}\times {d\log\big(1+e(K-1)/d\big)}.
\end{align}
Choosing%\footnote{One easily checks that this constant $C$ is always larger than $2$.}
$$
C^2 = 2\vee\frac{64 V^2\lambda_{\max,\bSigma}(2d)}{C_1 m}
$$
we get that $\max_{\bpi\in\calD}\KL(f^n_{\bpi}||f^n_{\bpi^1})\le \frac1{16} C_1{d\log\big(1+e(K-1)/d\big)}\le
\frac{\log L}{16}$.

Furthermore, for any $\bpi,\bpi'\in\calD$, in view of~\Cref{lemma:KLboundsLT} and \eqref{eqFvSxSx}, we have
\begin{align}
\KL(f_{\bpi} || f_{\bpi'}) &\geq \frac{\bar\kappa_{\bSigma}(2d,0)}{4V^2M d} \big{\|}\bpi - \bpi'\big{\|}_1^2
=\frac{\bar\kappa_{\bSigma}(2d,0)\gamma^2}{4V^2M d^3} \|\bomega - \bomega'\big\|_1^2\\
&\geq \frac{\bar\kappa_{\bSigma}(2d,0)}{64V^2M}\times \frac{\gamma^2}{d}\\
&\geq \frac{\bar\kappa_{\bSigma}(2d,0)}{32V^2M C^2}\times
\Big\{{\frac{\gamma^2}n\log\Big(1+\frac{K}{\gamma\sqrt{n}}\Big)}\Big\}^{\nicefrac12}.
\end{align}
Since $\frac{\bar\kappa_{\bSigma}(2d,0)}{32V^2M C^2} = 2C_3$, this implies that
\Cref{tsy_main_theo_nonparam_est} can be applied, which leads to the inequality
\begin{equation}
\inf_{\hat f}\sup_{f \in \calH_\calF(\gamma,1)}\Pb\!_{f}\bigg(\KL(f||\hat f)\geq
C_3\,\Big\{{\frac{\gamma^2}n\log\Big(1+\frac{K}{\gamma\sqrt{n}}\Big)}\Big\}^{\nicefrac12}\bigg)\geq 0.17.
\end{equation}
To complete the proof of the proposition, we have to consider the case $d > (K-1)/2$. In this case,
we can repeat all the previous arguments for $d= K/2$ and get the desired inequality.
\end{proof}

\subsection{Lower bound holding for all densities} % (fold)

Now that we have lower bounds in probability for $\calH_\calF(0,D)$ and $\calH_\calF(\gamma,1)$, we can derive a lower bound in expectation for
$\calH_\calF(\gamma,D)$. In particular, to prove \Cref{theorem:lower_bound},
we will use the inequality
\begin{equation}\label{eqFvSxSev}
\calR\big(\calH_{\calF}(\gamma,D)\big)  \ge \inf_{\hat f}
\sup_{f^*\in \calH_\calF(0,D)\cup \calH_\calF(\gamma,1)} \Ex[\KL(f^*||\hat f)].
\end{equation}

\begin{proof}[Proof of \Cref{theorem:lower_bound}]
To ease notation, let us define
\begin{equation}
r(n,K,\gamma,D) = \bigg[\frac{\gamma^2}{n}
\log\bigg(1+\frac{K}{\gamma\sqrt{n}}\bigg)\bigg]^{\nicefrac12}
+ \frac{D\log(1+K/D)}{n}\bigwedge \Big(\frac{\log(1+K/\sqrt{n})}{n}\Big)^{\nicefrac12}.
\end{equation}
We first consider the case where the dominating term is the first one, that is
\begin{equation}\label{eqFvSxZO}
\bigg[\frac{\gamma^2}{n} \log\bigg(1+\frac{K}{\gamma\sqrt{n}}\bigg)\bigg]^{\nicefrac12}
\ge \frac{ 3 D\log(1+K/D)}{n}.
\end{equation}
On the one hand, since $D\ge 1$, we have
\begin{equation}\label{eqFvSxZT}
\frac{ 3D\log(1+K/D)}{n}\ge \frac{ \log(1+eK)}{n}.
\end{equation}
On the other hand, using the inequality $\log(1+x)\le x$, we get
\begin{align}
\bigg[\frac{\gamma^2}{n} \log\bigg(1+\frac{K}{\gamma\sqrt{n}}\bigg)\bigg]^{\nicefrac12}
    &\le \frac{\gamma}{\sqrt{n}}\bigg[\log(1+eK)+\log\bigg(1+\frac{1}{e^2\gamma^2n}\bigg)\bigg]^{\nicefrac12}\\
    &\le \gamma\bigg[\frac{\log(1+eK)}{n}\bigg]^{\nicefrac12}+\frac{\gamma}{\sqrt{n}}\bigg[\frac{1}{e^2\gamma^2n}\bigg]^{\nicefrac12}\\
    &\le \gamma\bigg[\frac{\log(1+eK)}{n}\bigg]^{\nicefrac12}+\frac{\log (1+eK)}{2n}.\label{eqFvSxZTh}
\end{align}
Combining \eqref{eqFvSxZO}, \eqref{eqFvSxZT} and \eqref{eqFvSxZTh}, we get
\begin{equation}
\Big(\frac{\log(1+eK)}{n}\Big)^{\nicefrac12}\le 2\gamma.
\end{equation}
This implies that we can apply \Cref{prop:lower:2}, which yields
\begin{equation}
\inf_{\hat f}\sup_{f \in \calH_\calF(\gamma,D)}\Pb\!_{f}\bigg(\KL(f||\hat f)\geq
C_3\,\Big\{{\frac{\gamma^2}n\log\Big(1+\frac{K}{\gamma\sqrt{n}}\Big)}\Big\}^{\nicefrac12}\bigg)\geq 0.17.
\end{equation}
In view of \eqref{eqFvSxZO}, this implies that
\begin{equation}\label{eqFvSxZFr}
\inf_{\hat f}\sup_{f \in \calH_\calF(\gamma,D)}\Pb\!_{f}\bigg(\KL(f||\hat f)\geq
\frac34C_3\,r(n,K,\gamma,D)\bigg)\geq 0.17.
\end{equation}
We now consider the second case, where the dominating term in the rate is the
second one, that is
\begin{equation}\label{eqFvSxZFv}
\bigg[\frac{\gamma^2}{n} \log\bigg(1+\frac{K}{\gamma\sqrt{n}}\bigg)\bigg]^{\nicefrac12}
\le \frac{ 3 D\log(1+K/D)}{n}\bigwedge
\Big(\frac{\log(1+K/\sqrt{n})}{n}\Big)^{\nicefrac12}.
\end{equation}
In view of \Cref{prop:lower:1}, we have
\begin{equation}
\inf_{\hat f}\sup_{f \in \calH_\calF(\gamma,D)}\Pb\!_{f}\bigg(\KL(f||\hat f)\geq
C_2\,\frac{ D\log(1+K/D)}{n}\bigwedge
\Big(\frac{\log(1+K/\sqrt{n})}{n}\Big)^{\nicefrac12}\bigg)\geq 0.17.
\end{equation}
In view of \eqref{eqFvSxZFv}, we get
\begin{equation}\label{eqFvSxZSx}
\inf_{\hat f}\sup_{f \in \calH_\calF(\gamma,D)}\Pb\!_{f}\bigg(\KL(f||\hat f)\geq
\frac14 C_2\,r(n,K,\gamma,D)\bigg)\geq 0.17.
\end{equation}
Thus, we have proved that $\log(1+eK)\le n$ implies that
$\inf_{\hat f}\sup_{f \in \calH_\calF(\gamma,D)}\Pb\!_{f}\big(\KL(f||\hat f)\geq
C_4\,r(n,K,\gamma,D)\big)\geq 0.17$ for some constant $C_4>0$, whatever the relation
between $\gamma$ and $D$. The desired lower bound follows now from the Tchebychev
inequality
$\Ex\big[\KL(f||\hat f)\big]\ge C_4\,r(n,K,\gamma,D)
\Pb\!_{f}\big(\KL(f||\hat f)\geq C_4\,r(n,K,\gamma,D)\big)$.
\end{proof}

\section*{Appendix A: Concentration inequalities} % (fold)
\label{sec:appendix_a}

This section contains some well-known results, which are recalled here for the sake of the self-containedness
of the paper.

\begin{theorem}\label{Hoeffding1}
For each $s=1,\ldots,N$, let $Y_{1,s},\ldots,Y_{n,s}$ be $n$ independent and zero mean random variables such that
for some real numbers $a,b$ we have $\Pb(Y_{i,s}\in[a,b])=1$ for all $i\in[n]$
and $s\in[N]$. Then, we have
\begin{equation}
  \Ex\Big[\max_{s\in[N]} \frac1n\sum_{i=1}^n Y_{i,s}\Big]\le (b-a)\Big(\frac{\log N}{2n}\Big)^{\nicefrac12},\quad
  \Ex\Big[\max_{s\in[N]}\Big| \frac1n\sum_{i=1}^n Y_{i,s}\Big|\Big]\le (b-a)\Big(\frac{\log(2N)}{2n}\Big)^{\nicefrac12}.
\end{equation}
\end{theorem}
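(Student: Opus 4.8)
The plan is to invoke the standard Chernoff/Cramér argument, which reduces everything to Hoeffding's lemma. First I would record that, since each $Y_{i,s}$ is centered and takes values in $[a,b]$ almost surely, Hoeffding's lemma gives the sub-Gaussian moment-generating-function bound $\Ex[e^{\lambda Y_{i,s}}]\le \exp\{\lambda^2(b-a)^2/8\}$ for every $\lambda\in\RR$. By independence of $Y_{1,s},\ldots,Y_{n,s}$, writing $S_s=\frac1n\sum_{i=1}^n Y_{i,s}$, this yields $\Ex[e^{\lambda S_s}]=\prod_{i=1}^n\Ex[e^{(\lambda/n)Y_{i,s}}]\le \exp\{\lambda^2(b-a)^2/(8n)\}$.

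Next I would pass to the maximum. For $\lambda>0$, Jensen's inequality applied to the convex map $x\mapsto e^{\lambda x}$ gives
\begin{equation}
\exp\Big\{\lambda\,\Ex\big[\max_{s\in[N]}S_s\big]\Big\}\le \Ex\big[\max_{s\in[N]}e^{\lambda S_s}\big]\le \sum_{s=1}^N \Ex[e^{\lambda S_s}]\le N\exp\Big\{\frac{\lambda^2(b-a)^2}{8n}\Big\}.
\end{equation}
Taking logarithms and dividing by $\lambda$ yields $\Ex[\max_{s}S_s]\le \lambda^{-1}\log N+\lambda(b-a)^2/(8n)$, and optimizing the right-hand side over $\lambda$ by choosing $\lambda=2\sqrt{2n\log N}/(b-a)$ produces the bound $(b-a)(\log N/(2n))^{\nicefrac12}$, which is the first claimed inequality.

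For the second inequality I would simply apply the first one to the enlarged collection of $2N$ centered, $[{-}(b-a),(b-a)]$-valued families obtained by adjoining $\{-Y_{i,s}\}$ to $\{Y_{i,s}\}$ — wait, more economically, I keep the range $[a,b]$ for $\{Y_{i,s}\}$ and $[-b,-a]$ for $\{-Y_{i,s}\}$, each of width $b-a$ — and observe that $\max_{s\in[N]}|S_s|=\max\big\{\max_{s}S_s,\ \max_{s}(-S_s)\big\}$ is the maximum of these $2N$ averages. The first inequality with $N$ replaced by $2N$ then gives exactly $(b-a)(\log(2N)/(2n))^{\nicefrac12}$.

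There is no genuine obstacle here; the only ingredient that is not purely mechanical is Hoeffding's lemma itself, which I would either cite (e.g.\ from \cite{boucheron2013concentration}) or prove in one line via the bound $\psi(\lambda):=\log\Ex[e^{\lambda Y}]$ satisfying $\psi(0)=\psi'(0)=0$ and $\psi''(\lambda)\le(b-a)^2/4$ for a $[a,b]$-valued centered $Y$, so that $\psi(\lambda)\le\lambda^2(b-a)^2/8$ by Taylor's theorem.
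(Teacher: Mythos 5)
Your proof is correct and follows essentially the same route as the paper: both reduce to Hoeffding's lemma to show each average $S_s$ (and its negation, for the absolute-value bound) is sub-Gaussian with variance factor $(b-a)^2/(4n)$, and then apply the maximal inequality for finitely many sub-Gaussian variables. The only difference is that you derive that maximal inequality explicitly via the Jensen--Chernoff computation with the optimized $\lambda$, whereas the paper cites it as Theorem~2.5 of \citep{boucheron2013concentration}.
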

\begin{proof}
We denote $Z_s = \frac1n\sum_{i=1}^n Y_{i,s}$ for $s=1,\ldots, N$ and $Z_s = -\frac1n\sum_{i=1}^n Y_{i,s}$
for $s=N+1,\ldots,2N$. For every $s\in[2N]$, the logarithmic moment generating function $\psi_s(\lambda) =
\log \Ex[e^{\lambda Z_{s}}]$ satisfies
\begin{equation}
\psi_s(\lambda) =  \log \big(\prod_i \Ex[e^{\lambda Y_{i,s}/n}])
    = \sum_{i=1}^n \log  \Ex[e^{\lambda Y_{i,s}/n}] \le \frac{\lambda^2(b-a)^2}{8n},
\end{equation}
where the last inequality is a consequence of the Hoeffding lemma (see, for instance,  Lemma~2.2
in~\citep{boucheron2013concentration}). This means that $Z_s$ is sub-Gaussian with variance-factor
$\nu = {(b-a)^2}/{4n}$. Therefore, Theorem~2.5 from \citep{boucheron2013concentration}
yields $\Ex[\max_s Z_s]\le \sqrt{2\nu\log(2N)}$, which completes the proof.
\end{proof}

We group and state together the bounded differences and the Efron-Stein inequalities (\cite{boucheron2013concentration},
Theorems~6.2 and 3.1, respectively).

\begin{theorem}\label{McDiarmid}
Assume that a function f satisfies the bounded difference condition: there exist constants $c_i$, $i=1,\ldots,n$
such that for all $i=1,\ldots, n$, all $X=(X_1,\dots,X_i,\dots,X_n)$ and $X'=(X_1,\dots,X'_i,\dots,X_n)$ where
only the $i^{th}$ vector is changed
\begin{equation}
  |f(X)-f(X')| \leq c_i.
\end{equation}
Denote
\begin{equation}
  \nu = \sum_{i=1}^n c_i^2.
\end{equation}
Let $Z=f(X_1,\dots,X_n)$ where $X_i$ are independent. Then, for every $\delta\in(0,1)$,
\begin{equation}
  \Pb\Big\{Z\le \Ex Z +\Big(\frac{\nu\log(1/\delta)}{2}\Big)^{\nicefrac12} \Big\} \geq 1-\delta,\qquad\text{and}\qquad \var[Z] \le \frac{\nu}{2}.
\end{equation}
\end{theorem}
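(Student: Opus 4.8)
The plan is to derive both inequalities from the Doob martingale decomposition of $Z$ along the natural filtration. First I would set $\mathcal F_i = \sigma(X_1,\dots,X_i)$ with $\mathcal F_0$ the trivial $\sigma$-algebra, and define the martingale differences $\Delta_i = \Ex[Z\mid\mathcal F_i] - \Ex[Z\mid\mathcal F_{i-1}]$, so that $Z - \Ex Z = \sum_{i=1}^n\Delta_i$ with $\Ex[\Delta_i\mid\mathcal F_{i-1}] = 0$.

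The central step is to control the conditional range of each $\Delta_i$. Because the $X_j$ are independent, $\Ex[Z\mid\mathcal F_i]$ is a partial expectation $h_i(X_1,\dots,X_i)$ with $h_i(x_{1:i}) = \Ex[f(x_1,\dots,x_i,X_{i+1},\dots,X_n)]$, and the bounded difference hypothesis gives $|h_i(x_{1:i-1},x_i) - h_i(x_{1:i-1},x_i')|\le c_i$ for all $x_i,x_i'$ (Fubini lets one pull the difference inside the expectation over the remaining coordinates). Since $\Ex[Z\mid\mathcal F_{i-1}]$ is the conditional average of $h_i$ over $X_i$, conditionally on $\mathcal F_{i-1}$ the variable $\Delta_i$ is centered and takes values in an interval of width at most $c_i$. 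Hoeffding's lemma (Lemma~2.2 in~\citep{boucheron2013concentration}) then gives $\Ex[e^{\lambda\Delta_i}\mid\mathcal F_{i-1}]\le e^{\lambda^2 c_i^2/8}$ for all $\lambda\in\RR$, and iterating through the tower rule yields $\Ex[e^{\lambda(Z - \Ex Z)}]\le e^{\lambda^2\nu/8}$; that is, $Z - \Ex Z$ is sub-Gaussian with variance factor $\nu/4$. A standard Chernoff bound produces $\Pb\{Z - \Ex Z\ge t\}\le e^{-2t^2/\nu}$ for every $t>0$, and taking $t = (\nu\log(1/\delta)/2)^{\nicefrac12}$ gives the stated deviation inequality.

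For the variance bound, the cleanest route is to reuse the martingale structure: orthogonality of the differences gives $\var[Z] = \sum_{i=1}^n\Ex[\Delta_i^2] = \sum_{i=1}^n\Ex\big[\Ex[\Delta_i^2\mid\mathcal F_{i-1}]\big]$, and the variance of a centered variable supported in an interval of length $c_i$ is at most $c_i^2/4$, so $\var[Z]\le\tfrac14\sum_i c_i^2\le\nu/2$. Equivalently this is the Efron--Stein inequality $\var[Z]\le\tfrac12\sum_i\Ex[(Z - Z_i')^2]$ combined with the estimate $|Z - Z_i'|\le c_i$, where $Z_i'$ denotes $f$ evaluated at the sample with $X_i$ swapped for an independent copy. (Alternatively, a sub-Gaussian variable with variance factor $\nu/4$ automatically has variance at most $\nu/4$.)

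The only subtle point is the conditional-range estimate for $\Delta_i$: one must use independence to realize the conditional expectations as partial integrals of $f$ and check that averaging over $X_i$ does not enlarge the oscillation beyond $c_i$. Once that is in place, the remaining ingredients --- Hoeffding's lemma, the telescoping of conditional moment generating functions, the Chernoff bound, and the orthogonality of martingale differences --- are entirely routine.
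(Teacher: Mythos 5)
Your proof is correct: the Doob martingale decomposition, the conditional range bound on $\Delta_i$ via Fubini and independence, Hoeffding's lemma, the Chernoff bound, and the orthogonality-of-increments variance estimate are all applied properly (your variance argument in fact yields the stronger bound $\nu/4$). Note that the paper does not prove this statement at all --- it is recalled in the appendix as a classical result with a citation to Theorems~6.2 and~3.1 of \cite{boucheron2013concentration} --- so your write-up supplies the standard textbook argument where the paper offers only a reference.
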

% section appendix_b_mcdiarmid_inequality (end)

%\section*{Appendix B: Contraction Principle} % (fold)
%\label{sec:appendix_b}
Next we state the contraction principle of \citep{LedouxTal:91}; a proof can be found in
(\cite{boucheron2013concentration}, Theorem 11.6).
\begin{theorem}
Let $x_1,\dots,x_n$ be vectors whose real-valued components are indexed by $\bTau$,
that is, $x_i=(x_{i,s})_{s\in\bTau}$. For each $i=1,\dots,n$ let $\varphi_i:\RR\rightarrow\RR$
be a $1$-Lipschitz function such that $\varphi_i(0)=0$. Let $\epsilon_1,\dots,\epsilon_n$ be
independent Rademacher random variables, and let $\Psi:[0,\infty) \rightarrow \RR$ be a
non-decreasing convex function. Then
\begin{align}
  \Ex \bigg[\Psi\bigg(\frac{1}{2}\sup_{s\in\bTau}\bigg|\sum_{i=1}^n\epsilon_i\varphi_i(x_{i,s})\bigg| \bigg) \bigg]
  &\leq
  \Ex \bigg[\Psi\bigg(\sup_{s\in\bTau}\bigg|\sum_{i=1}^n\epsilon_i x_{i,s}\bigg| \bigg) \bigg]\\
  \Ex \bigg[\Psi\bigg(\sup_{s\in\bTau}\sum_{i=1}^n\epsilon_i\varphi_i(x_{i,s})\bigg) \bigg]
  &\leq  \Ex \bigg[\Psi\bigg(\sup_{s\in\bTau}\sum_{i=1}^n\epsilon_i x_{i,s} \bigg) \bigg] .
\end{align}
\end{theorem}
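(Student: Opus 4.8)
The plan is to prove the one-sided inequality (the second one) by induction on $n$, reducing each inductive step to a single Rademacher variable, and then to deduce the two-sided inequality (the first one) from it.

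The heart of the argument is a one-variable comparison: for any $h:\bTau\to\RR$, any $1$-Lipschitz $\varphi$ with $\varphi(0)=0$, and any reals $(u_s)_{s\in\bTau}$,
\begin{equation}
\Psi\Big(\sup_{s}\,(h(s)+\varphi(u_s))\Big)+\Psi\Big(\sup_{s}\,(h(s)-\varphi(u_s))\Big)
\;\le\;\Psi\Big(\sup_{s}\,(h(s)+u_s)\Big)+\Psi\Big(\sup_{s}\,(h(s)-u_s)\Big).
\end{equation}
To establish this, I would (after an $\eta$-approximation to cover the case where the suprema are not attained) take maximizers $s_1,s_2$ of the two left-hand suprema and set $\alpha=h(s_1)+\varphi(u_{s_1})$, $\beta=h(s_2)-\varphi(u_{s_2})$, $A=\sup_s(h(s)+u_s)$, $B=\sup_s(h(s)-u_s)$. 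Using $\varphi(u_{s_1})-\varphi(u_{s_2})\le|u_{s_1}-u_{s_2}|$ and splitting on the sign of $u_{s_1}-u_{s_2}$ one obtains $\alpha+\beta\le A+B$; using $|\varphi(u)|\le|u|$ one obtains $\alpha\le\max(A,B)$ and $\beta\le\max(A,B)$. The bound $\Psi(\alpha)+\Psi(\beta)\le\Psi(A)+\Psi(B)$ then follows from a one-dimensional fact about convex nondecreasing functions: the maximum of $(a,b)\mapsto\Psi(a)+\Psi(b)$ over $\{a\le\max(A,B),\,b\le\max(A,B),\,a+b\le A+B\}$ is attained at $(A,B)$ — monotonicity pushes an optimizer onto the frontier $a+b=A+B$, and on that segment (where both coordinates stay between $A$ and $B$) convexity places the maximum at an endpoint.

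Given this lemma, the induction is routine: setting $I_k=\Ex\big[\Psi\big(\sup_s\big(\sum_{i\le k}\epsilon_i x_{i,s}+\sum_{i>k}\epsilon_i\varphi_i(x_{i,s})\big)\big)\big]$, so that $I_0$ and $I_n$ are the two sides of the one-sided inequality, I would show $I_{k-1}\le I_k$ by conditioning on $(\epsilon_i)_{i\neq k}$; given those, $h(s):=\sum_{i<k}\epsilon_i x_{i,s}+\sum_{i>k}\epsilon_i\varphi_i(x_{i,s})$ is a deterministic function of $s$, and averaging over $\epsilon_k$ turns the lemma (applied with $\varphi=\varphi_k$, $u_s=x_{k,s}$) exactly into $\Ex_{\epsilon_k}\Psi(\sup_s(h+\epsilon_k\varphi_k(x_{k,s})))\le\Ex_{\epsilon_k}\Psi(\sup_s(h+\epsilon_k x_{k,s}))$. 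Taking expectation over the remaining signs and chaining $I_0\le I_1\le\cdots\le I_n$ gives the second inequality (here $\Psi$ is harmlessly extended to $\RR$ by the constant value $\Psi(0)$ on the negative half-line, which keeps it convex and nondecreasing).

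Finally, for the first inequality, write $Z_s=\sum_i\epsilon_i\varphi_i(x_{i,s})$, $U=\sup_s Z_s$, $V=\sup_s(-Z_s)$, so that $\sup_s|Z_s|=\max(U,V)\le (U\vee0)+(V\vee0)$. Since $\Psi$ is nondecreasing and convex, $\Psi\big(\tfrac12\sup_s|Z_s|\big)\le\Psi\big(\tfrac12((U\vee0)+(V\vee0))\big)\le\tfrac12\Psi(U)+\tfrac12\Psi(V)$, and as $(\epsilon_i)$ and $(-\epsilon_i)$ have the same law we have $\Ex\Psi(U)=\Ex\Psi(V)$; hence $\Ex\Psi\big(\tfrac12\sup_s|Z_s|\big)\le\Ex\Psi(\sup_s Z_s)$, and the already-proved one-sided inequality together with $\sup_s\sum_i\epsilon_i x_{i,s}\le\sup_s|\sum_i\epsilon_i x_{i,s}|$ concludes. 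I expect the main obstacle to be the one-variable lemma — getting the sign case-analysis and its reduction to the convexity fact exactly right, and dealing cleanly with suprema that are not attained.
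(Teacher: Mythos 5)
This statement is the Ledoux--Talagrand contraction principle, which the paper does not prove: it is quoted in Appendix~A with a pointer to \citet{LedouxTal:91} and to Theorem~11.6 of \citet{boucheron2013concentration}. Your argument is a correct, self-contained reconstruction of exactly that standard proof: the one-coordinate comparison lemma (your displayed inequality for a single sign variable), verified via $\alpha+\beta\le A+B$, $\alpha,\beta\le\max(A,B)$, and the elementary fact that $\Psi(a)+\Psi(b)\le\Psi(A)+\Psi(B)$ on that region for $\Psi$ convex nondecreasing; then the telescoping induction $I_0\le I_1\le\cdots\le I_n$ by conditioning on all signs but one; then the absolute-value version via $\sup_s|Z_s|\le (U\vee 0)+(V\vee 0)$, convexity, and the symmetry $U\overset{d}{=}V$. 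The two technical points you flag are handled correctly: extending $\Psi$ to $\RR$ by the constant $\Psi(0)$ does preserve convexity and monotonicity (the slopes remain nondecreasing across $0$ since $\Psi'(0^+)\ge 0$), and the $\eta$-approximation plus continuity of the convex function $\Psi$ disposes of non-attained suprema. I see no gap.
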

% section appendix (end)

\section*{Acknowledgments}
The work of M.S.\ was partially supported by the French ``Agence Nationale de la Recherche'',
CIFRE n\textsuperscript{o} 2014/0517, and by {%\narrowstyle
ARTEFACT} (\url{www.artefact.is}).
The work of A.D.\ was partially supported by the grant Investissements d'Avenir
(ANR-11-IDEX-0003/Labex Ecodec/ANR-11-LABX-0047) and the chair ``LCL/GENES/Fon\-da\-tion du
risque, Nouveaux enjeux pour nouvelles donn\'ees''.

\bibliographystyle{plain}
\bibliography{ref}

\end{document}